\pdfoutput=1
\documentclass[10pt,leqno]{amsart}
\usepackage{graphicx}
\baselineskip=16pt

\usepackage{indentfirst,csquotes}

\topmargin= .5cm
\textheight= 20cm
\textwidth= 32cc
\baselineskip=16pt

\evensidemargin= .9cm
\oddsidemargin= .9cm

\usepackage{amsmath}

\DeclareMathOperator*{\argmax}{arg\,max}
\DeclareMathOperator*{\argmin}{arg\,min}

\usepackage{amssymb,amsthm,amsmath}
\usepackage{xcolor,paralist,hyperref,fancyhdr,etoolbox}
\newtheorem{theorem}{Theorem}[]
\newtheorem{definition}[theorem]{Definition}

\newtheorem{lemma}[theorem]{Lemma}
\newtheorem{proposition}[theorem]{Proposition}

\newtheorem{assumption}{Assumption}%
\newtheorem{remark}{Remark}%

\numberwithin{equation}{section}

\usepackage{subcaption}
\usepackage{caption}
\usepackage{float}

\usepackage{microtype}

\usepackage{pgfplots}
\usepackage{fancyvrb} 
\usepackage{lipsum}
\usepackage{amsfonts}
\usepackage{graphicx}
\usepackage{epstopdf}
\usepackage{bbm}


\hypersetup{ colorlinks=true, linkcolor=black, filecolor=black, urlcolor=black }

\usepackage{lipsum}

\DeclareUnicodeCharacter{2212}{−}
\usepgfplotslibrary{groupplots,dateplot}
\usetikzlibrary{patterns,shapes.arrows}
\pgfplotsset{compat=newest}

\usepackage{algorithm,algpseudocode}

\ifpdf
    \DeclareGraphicsExtensions{.eps,.pdf,.png,.jpg}
\else
  \DeclareGraphicsExtensions{.eps}
\fi


\usepackage{amsaddr}

\begin{document}

\title[CURVATURE ALIGNED SIMPLEX GRADIENT]{CURVATURE ALIGNED SIMPLEX GRADIENT: PRINCIPLED
SAMPLE SET CONSTRUCTION FOR NUMERICAL DIFFERENTIATION}

\author{Daniel Lengyel, Panos Parpas}
\email[Daniel Lengyel]{d.lengyel19@imperial.ac.uk}
\address{Department of Computing, Imperial College London}

\author{Nikolas Kantas}
\address{Department of Mathematics, Imperial College London}

\author{Nicholas R. Jennings}
\address{Loughborough University}

\date{\today}

\maketitle

\begin{abstract}
The simplex gradient, a popular numerical differentiation method due to its flexibility, lacks a principled method by which to construct the sample set, specifically the location of function evaluations. Such evaluations, especially from real-world systems, are often noisy and expensive to obtain, making it essential that each evaluation is carefully chosen to reduce cost and increase accuracy. This paper introduces the curvature aligned simplex gradient (CASG), which provably selects the optimal sample set under a mean squared error objective. As CASG requires function-dependent information often not available in practice, we additionally introduce a framework which exploits a history of function evaluations often present in practical applications. Our numerical results, focusing on applications in sensitivity analysis and derivative free optimization, show that our methodology significantly outperforms or matches the performance of the benchmark gradient estimator given by forward differences (FD) which is given exact function-dependent information that is not available in practice. Furthermore, our methodology is comparable to the performance of central differences (CD) that requires twice the number of function evaluations.
\end{abstract}

\section{Introduction}

Gradient estimation is crucial in applications ranging from chemical engineering \cite{barton1992computing}, financial markets \cite{gourieroux2000sensitivity} to medical and biological research \cite{qian2020sensitivity, johnston2007examples}. When it is possible to modify a system, then methods such as automatic differentiation \cite{griewank1989automatic, baydin2018automatic, schulman2015gradient}, infinitesimal perturbation analysis  \cite{suri1987infinitesimal, wardi2018perturbation}, score function estimators \cite{williams1992simple} or weak derivative estimators \cite{bhatt2019policy} are the first choice for gradient estimation. However, many applications involve expensive physical processes, simulations or legacy code which cannot be easily modified. This is generally the domain of numerical differentiation, where one only has access to potentially corrupted system function evaluations which may be expensive to obtain. The challenge in such circumstances is determining the optimal points for function evaluation to refine the gradient estimate.

A strong numerical differentiation method that significantly benefits from a careful choice of the sample set, i.e., locations of evaluations, is the simplex gradient. The simplex gradient places limited restrictions on the sample set, which has led to its gain in popularity \cite{hare2020calculus, hare2022error, regis2015calculus, berahas2019linear, coope2019efficient}, especially in the derivative-free optimization community \cite{conn2008geometry, conn2008geometry2, custodio2007using, custodio2008using}. However, there is only a limited understanding of which sample sets may lead to good gradient estimates and how to construct them in practice \cite{hare2022error, hare2020calculus, regis2015calculus}. In this paper, we present substantial advancements addressing this open question.

\subsection{Related Works}

Besides a general sample set, there are various other benefits to the use of the simplex gradient. The simplex gradient uses a linear model and hence only requires $d + 1$ function evaluations, with $d$ being the number of input variables. A linear model is often preferable as higher order models can lead to instability \cite{cheney2009course, dahlquist2003numerical}, which becomes amplified when noise is introduced. Unlike the simplex gradient, such methods then often require a form of explicit regularization \cite{hanke2001inverse, ahnert2007numerical, cullum1971numerical, knowles2014methods, wei2007numerical, van2020numerical} which is difficult to choose in practice. The number of function evaluations is also a strong benefit of the simplex gradient. Higher-order methods or methods such as Richardson Extrapolation \cite{burden2015numerical, bruno2012numerical, knowles2014methods, brekelmans2008gradient, fornberg1988generation} often require far more evaluations which may be infeasible. Conversely, using fewer points would make the linear fit under-determined and can only be applied in cases where the gradient is sparse \cite{borkar2017gradient}. Inexact methods which require fewer than $d+1$ function evaluations are often stochastic in nature, for example simultaneous perturbation stochastic approximation \cite{spall1992multivariate, spall1998overview, s2013stochastic} and smoothing methods \cite{berahas2019linear}. However, such methods are often far too inaccurate for gradient estimation  \cite{blakney2019comparison} and have been shown to have poorer performance compared to a deterministic counterpart \cite{scheinberg2022finite, berahas2019linear, berahas2022theoretical}. 

If a gradient estimator is allowed to use only $d+1$ function evaluations, the baseline methods are often variations of the simplex gradient, with the variation being in the choice of the sample set. While simple in nature, forward differences has remained the go-to simplex gradient based method as it provides a closed form solution for its optimal sample set \cite{brekelmans2005gradient, berahas2022theoretical} and outperforms other comparable methods \cite{brekelmans2005gradient}. However the conditions on the sample set are very restrictive and may hinder performance. Other methods which have allowed for a general sample set are often heuristically derived from general principles from function approximation \cite{dahlquist2003numerical, burden2015numerical} or have been presented in terms of Design of Experiment (DoE) \cite{zhang1991design, brekelmans2005gradient}. DoE was pioneered by \cite{fisher1936design} to assess the factors influencing the response of a system by laying out principles by which to assess the individual and combined impact of changing parameters to the system in an efficient manner. These methods have resulted in various schemes, such as the fractional factorial design  \cite{montgomery2017design} and the Plackett–Burman design \cite{plackett1946design}. Yet, there is no guarantee or systematic understanding for when these designs will provide good gradient estimates. Additionally their performance is generally comparable to or worse than forward differences \cite{brekelmans2005gradient}, making finite difference so far the preferable choice.

To choose a sample set in practice, there is some function-dependent information which is needed as otherwise a sample set cannot be adapted to a specific problem. However, such information is not readily available in practice, and hence many methods have focused on developing such estimators for the use in finite difference schemes \cite{gill2019practical, gill1983intervals, shi2022adaptive, more2012estimating, barton1992computing}. While these methods attempt to reduce the number of function evaluations by estimating the higher-order information only periodically, the procedure remains expensive as more function evaluations than strictly needed for gradient estimation are required.

\subsection{Contributions}
Against this background we present the following contributions.

\begin{itemize}
    \item We introduce the Curvature Aligned Simplex Gradient (CASG), a simplex gradient method based on principled sample set construction. CASG minimizes a mean squared error using a Taylor approximation of the true gradient for problems with dimensions that are powers of two. Theorem \ref{thm:SVD_Opt} proves the optimality of CASG and Proposition \ref{prop:sig_alg} provides an efficient construction algorithm.
    
    \item We introduce the extended curvature aligned simplex gradient (eCASG) for use in arbitrary dimensions. eCASG heuristically partitions the space and Proposition \ref{prop:eCASG} proves that under this partition eCASG is the optimal simplex gradient based estimator. 
    
    \item For the construction of our gradient estimators, we introduce a global model framework to estimate any necessary function-dependent information. Instead of requiring further sampling, the framework reuses a history of function evaluations to build a global model which allows us to construct any function-dependent information needed for CASG and eCASG. 
    
    \item We numerically evaluate our methodology by considering applications in sensitivity analysis and derivative free optimization. 
    \begin{itemize}
        \item Our methodology significantly outperforms or matches the performance of a best-case forward difference estimator which is given all required function-dependent information. As current gradient estimators with $d+1$-function evaluations are upper-bounded by the performance of  this best-case FD estimator, our proposed method can be used to significantly improve on such methods. 
        \item Our methodology often matches the performance of central differences, generally considered a far superior gradient estimator but requiring twice the number of function evaluations per gradient estimate. This finding implies that in many situations we can replace central difference methods with our methodology with limited loss in performance while reducing the function evaluations by a factor of two. 
    \end{itemize}
\end{itemize}


\subsection{Outline}
Section \ref{sec:setup} introduces the relevant concepts. In Section \ref{sec:optMotivation} we motivate and derive a Taylor-expansion based objective function for the optimization problem related to finding the optimal sampling set for the simplex gradient and derive some crucial simplifications. In Section \ref{sec:mainThm} we present the main results and provide the proof and algorithms for CASG. We additionally provide some intuition behind CASG and an initial comparison to forward differences. In Section \ref{sec:subspace} we then present the extended curvature aligned simplex gradient. We provide the global model framework algorithm and the empirical results in Section \ref{sec:Numerics}. In Section \ref{sec:Numerics} we also discuss in further detail the use of the global model as a separate gradient estimation method. We also consider the cost of the global model framework when no history of function evaluation is available.

\subsection{Notation}
$\Vert \cdot \Vert_2$ is the $L_2$ norm and $\Vert \cdot \Vert_F$ the Frobenius norm. $\text{tr}(\cdot)$ is the trace operator. The general linear group of degree $d$ is $GL_d$ and represents the space of all invertible $d \times d$-matrices. Given a vector $v \in \mathbb{R}^d$, $diag(v) \in \mathbb{R}^{d \times d}$ is the diagonal matrix with $v$ along the diagonal. $v \succ 0$ denotes all entries greater than zero and $v \succeq 0$ greater or equal to zero. Given a matrix $B \in \mathbb{R}^{d \times d}$, $diag(B) \in \mathbb{R}^d$ is the diagonal of the matrix $B$. $B \succ 0$ denotes $B$ positive definite and $B \succeq 0$ positive semi-definite. $\mathbb{R}_+$ is the set of all strictly positive real values and $\mathbb{Z}_+$ is the space of all positive integers. Given a set $A$, $\mathcal{P}(A)$ is the power-set of $A$ and $\vert A \vert$ the size of $A$. When clear from context, $H$ denotes the second derivative, i.e., the Hessian, of a function. Let the eigen-decomposition be given by $H = R D R^T$, with $R$ orthogonal and $D$ diagonal. By nomenclature we use curvature to refer to the second order behavior of a function. To disambiguate between different Big-O notations, we use $\mathcal{O}$ for runtime and $O$ for bounding error terms from Taylor's-Theorem. The Hadamard matrix $M_k$ is an orthogonal matrix with the property $(M_k)_{ij}^2 = \frac{1}{d}$ and all positive entries in the $k$th column.

\section{Problem Statement and Set-Up} \label{sec:setup}

The goal is to find the gradient of a differentiable function $f : \mathcal{D} \rightarrow \mathbb{R}$ where $\mathcal{D} \subset \mathbb{R}^d$ has a non-empty interior, while only having access to a noisy black-box. Since function evaluations may be expensive, the function can only be evaluated $d+1$ times per gradient estimation. The metric by which the quality of the gradient is assessed is the mean squared error (MSE).

\subsection{Black Box Evaluation and History}
We define the black-box function as a random process to formalize the notion of sequentially evaluating a black-box. The black-box function is given by $\tilde{f} : \mathcal{D} \times \mathbb{Z}_+$ where  
\begin{align*}
    \tilde{f}(x; i) = f(x) + \epsilon(i).
\end{align*}
 When clear from context, we refer to the black-box function as $\tilde{f}(x)$ and the noise as $\epsilon(x)$. We also define the \textit{evaluation history} as the set 
\begin{align*}
    \mathcal{E}(I) = \{(x(i), \tilde{f}(x(i); i)\}_{1 \leq i \leq I}.
\end{align*}
We write $x(i)$ to make it explicit that the choice of where $x$ is evaluated at the $i$th time-step may depend on the past. 
 
 We now introduce the following assumption on the noise. 
\begin{assumption}\label{as:sig}
$\epsilon(i)$ is assumed to have zero mean and constant and bounded second moment. We additionally we assume that for $i \neq j$, $\epsilon(i)$ is independent of $\epsilon(j)$ and that $x(i)$ is independent of $\epsilon(i)$. 
\end{assumption}

The assumption of zero mean noise and constant and bounded second moment are common in the literature. We additionally make an implicit assumption explicit, namely that subsequent function evaluations have independent noise. The last assumption that $\epsilon(i)$ is independent of $x(i)$ makes explicit that we can not choose $x$ by predicting the noise.

\subsection{The Simplex Gradient}\label{subsec:simplex}
The simplex gradient set-up and relevant terminology is in line with other works \cite{hare2020calculus, regis2015calculus}.
Let $x'$ be the point at which we want to estimate the gradient. We define the \textbf{sample set} as an ordered set $\mathcal{X} = \{x_i\}_{0 \leq i \leq d}$, with $x_0 = x'$. The \textbf{function vector} is the random vector $\mathbf{\tilde{f}}(\mathcal{X})$ such that $\mathbf{\tilde{f}}(\mathcal{X})_i = \tilde{f}(x_i)$. The \textbf{difference matrix} $S(\mathcal{X}) \in \mathbb{R}^{d \times d}$ is defined as $S_{ij} = (x_j - x_0)_i$. We will often just refer to it as $S$ when clear from context. We refer to the columns as the \textbf{difference vectors} and write $s_i = x_j - x_0$. Lastly, we define the \textbf{function difference vector} as $\delta \mathbf{\tilde{f}}_{i}(\mathcal{X}) = \tilde{f}(x_i) - \tilde{f}(x_0)$.
\begin{lemma}
    Let $\mathcal{X}$ be a sample set such that the size is $d + 1$, $x_0 = x'$ and $span(S(\mathcal{X})) = \mathbb{R}^d$. Then the unique affine function that interpolates the set of points $\{(x_i, \tilde{f}(x_i))\}_{0 \leq i \leq d}$ is given by
    \begin{align*}
        \eta^*(x) &= \mathbf{\tilde{f}}(x_0) + (\nabla_S \tilde{f}(x_0))^T (x - x_0)
    \end{align*}
    where $\nabla_S \tilde{f}(x_0)$ is the \textbf{simplex gradient} and is given by
    \begin{align*}
        \nabla_S \tilde{f}(x_0) = S^{-T} \delta \mathbf{\tilde{f}}
    \end{align*}
    with $S^{-T} = (S^T)^{-1}$.
\end{lemma}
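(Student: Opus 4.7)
The plan is to reduce the interpolation problem to a linear system and invoke invertibility of $S^T$, which is guaranteed by the full-span hypothesis. I would begin by parameterizing an arbitrary affine function on $\mathbb{R}^d$ in the form $\eta(x) = a + b^T(x - x_0)$ with $a \in \mathbb{R}$ and $b \in \mathbb{R}^d$. Choosing $x_0$ as the base point is purely a matter of convenience: every affine function admits such a representation, and it turns out to decouple the constant term from the linear term in a way that is well-suited to the matrix $S$.

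Next I would impose the interpolation conditions one at a time. Evaluating at $x_0$ immediately forces $a = \tilde{f}(x_0)$. Evaluating at $x_i$ for $i = 1,\dots,d$ then yields
\begin{equation*}
    \tilde{f}(x_i) - \tilde{f}(x_0) = b^T (x_i - x_0) = b^T s_i,
\end{equation*}
so the remaining $d$ conditions can be rewritten as $\delta \tilde{\mathbf{f}}_i = b^T s_i$. Stacking these equations across $i$ and recalling the definition $S_{ij} = (x_j - x_0)_i$, whose $j$th column is $s_j$, the system compactly reads $S^T b = \delta \tilde{\mathbf{f}}$.

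The hypothesis $\mathrm{span}(S) = \mathbb{R}^d$ says exactly that $S \in GL_d$, which in turn implies $S^T \in GL_d$. Therefore the linear system has the unique solution $b = S^{-T} \delta \tilde{\mathbf{f}}$, which is precisely the simplex gradient $\nabla_S \tilde{f}(x_0)$. Combining with $a = \tilde{f}(x_0)$ gives the claimed formula for $\eta^*$, and the uniqueness part of the lemma follows from the invertibility argument applied to the difference of two interpolants (or equivalently from uniqueness of the solution of the linear system).

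There is no real obstacle here; the statement is essentially a bookkeeping exercise in rewriting the interpolation conditions as a matrix equation. The only point requiring any care is ensuring the transposition conventions match the definition of $S$ given earlier in the excerpt, so that the right-hand side of the linear system is $\delta \tilde{\mathbf{f}}$ rather than its image under some permutation. Once the column/row convention is fixed, the proof is a two-line linear algebra argument.
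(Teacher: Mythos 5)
Your proof is correct: reducing the interpolation conditions to the linear system $S^T b = \delta\tilde{\mathbf{f}}$ and invoking invertibility of $S$ (equivalent to the span hypothesis) is the canonical argument, and your handling of the column convention ($b^T s_i = (S^T b)_i$) and of uniqueness via the affine reparameterization is sound. The paper states this lemma as a standard result without proof, so there is no authorial argument to compare against; yours is exactly the expected one.
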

\begin{remark}
The simplex gradient is a random variable as it depends on $\tilde{f}$. Therefore, the mean squared error of the gradient estimator is taken with respect to the probability space $(\Sigma, \mathcal{F}, \mathbb{P})$. 
\end{remark}

Forward differences can be seen as an instance of the simplex gradient where the sample set is given by $\{x'\} \cup \{h_i e_i\}_{1 \leq i \leq d}$. Central difference can also be seen as an instance of an over-determined simplex gradient \cite{hare2020calculus}.

\section{Optimization Problem for Optimal Sample Set}\label{sec:optMotivation}
In this section we motivate and define the objective function which is used to assess the goodness of a sample set for numerical differentiation under the simplex gradient. The objective function is based on the mean squared error and is derived via a Taylor-expansion argument as is common for finite difference schemes \cite{burden2015numerical}. 

To better understand the mean squared error, we consider its decomposition into the approximation and noise error via the following Proposition. 
\begin{proposition}[MSE Decomposition]\label{prop:mse_noise}
Let the sample set $\mathcal{X}$ and the black-box function $\tilde{f}$ be given and given Assumption \ref{as:sig} on the noise. Let $S$ be the difference set corresponding to the sample set. Then the mean squared error is given by 
\begin{align*}
    &\mathbb{E}[\Vert \nabla_{S} \tilde{f}(x_0) - \nabla f(x_0) \Vert_2^2] \\
    &=  \underbrace{\Vert \nabla_{S} f(x_0) - \nabla f(x_0) \Vert_2^2}_{\text{Approximation Error}} 
    + \underbrace{\sigma^2 \Vert S^{-1} \Vert_F^2 + \sigma^2 \Vert S^{-T} \mathbf{1} \Vert_2^2}_{\text{Noise Error}}. 
\end{align*}
\end{proposition}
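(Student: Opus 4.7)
The plan is to decompose $\nabla_S \tilde f(x_0)$ into a ``signal'' part (what the simplex gradient would give on the noiseless $f$) and a ``noise'' part that is linear in the $\epsilon(i)$'s, and then expand the squared error. First I would write $\delta\tilde{\mathbf f} = \delta \mathbf f + \boldsymbol{\epsilon} - \epsilon_0 \mathbf{1}$, where $\boldsymbol{\epsilon} = (\epsilon(1),\dots,\epsilon(d))^T$ and $\epsilon_0 = \epsilon(0)$ are the noises drawn at the $d+1$ sampling times. Since $\nabla_S \tilde f(x_0) = S^{-T}\delta\tilde{\mathbf f}$ by the lemma, linearity gives
\begin{equation*}
  \nabla_S \tilde f(x_0) - \nabla f(x_0) = \underbrace{\bigl(\nabla_S f(x_0) - \nabla f(x_0)\bigr)}_{\text{deterministic}} + \underbrace{S^{-T}\bigl(\boldsymbol{\epsilon} - \epsilon_0 \mathbf{1}\bigr)}_{\text{zero-mean random}}.
\end{equation*}

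Next, I would expand $\|\cdot\|_2^2$ of this sum into three pieces: the squared norm of the deterministic term, twice their inner product, and the squared norm of the noise term. Taking expectation, the cross term vanishes because $\mathbb E[\boldsymbol{\epsilon}]=0$ and $\mathbb E[\epsilon_0]=0$ by Assumption \ref{as:sig}, so that $\mathbb E[S^{-T}(\boldsymbol{\epsilon}-\epsilon_0\mathbf 1)] = 0$. This isolates the approximation-error term $\|\nabla_S f(x_0) - \nabla f(x_0)\|_2^2$ exactly as claimed.

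The remaining work is to evaluate $\mathbb{E}\bigl[\|S^{-T}(\boldsymbol\epsilon - \epsilon_0 \mathbf 1)\|_2^2\bigr]$. Expanding the inner product gives three terms:
\begin{equation*}
  \mathbb{E}\bigl[\|S^{-T}\boldsymbol\epsilon\|_2^2\bigr] \;-\; 2\,\mathbb{E}\bigl[\epsilon_0\,(S^{-T}\boldsymbol\epsilon)^T (S^{-T}\mathbf{1})\bigr] \;+\; \mathbb{E}\bigl[\epsilon_0^2\bigr]\,\|S^{-T}\mathbf 1\|_2^2 .
\end{equation*}
For the first term, using independence and equal variance of the $\epsilon(i)$'s in $\boldsymbol\epsilon$, I get $\mathbb E[\|S^{-T}\boldsymbol\epsilon\|_2^2] = \sigma^2\,\|S^{-T}\|_F^2 = \sigma^2\,\|S^{-1}\|_F^2$, since the Frobenius norm is transpose-invariant. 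The middle (cross) term vanishes because $\epsilon_0$ is independent of $\boldsymbol\epsilon$ and both have zero mean, so the pulled-out scalar $\mathbb E[\epsilon_0\,\boldsymbol\epsilon]=0$. The last term is $\sigma^2\,\|S^{-T}\mathbf 1\|_2^2$. Summing reproduces the noise-error expression in the statement.

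I don't expect a genuine obstacle here; the result is a bias/variance decomposition combined with a standard Frobenius-norm identity for linear combinations of i.i.d.\ noise. The only step requiring care is the bookkeeping of independence: one must separate $\epsilon_0$ from $\boldsymbol\epsilon$ to ensure both the outer cross term (with the deterministic approximation error) and the inner cross term (between $\epsilon_0$ and $\boldsymbol\epsilon$) vanish. This uses exactly the zero-mean and pairwise-independence parts of Assumption \ref{as:sig}, while the constant-variance part is what lets the $\sigma^2$ factor out cleanly in both remaining terms.
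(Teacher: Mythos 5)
Your proof is correct and follows essentially the same route as the paper: the same bias--variance decomposition with the cross term vanishing by zero-mean noise, followed by computing the noise variance. The only (cosmetic) difference is that you expand $\delta\boldsymbol{\epsilon}=\boldsymbol{\epsilon}-\epsilon_0\mathbf{1}$ term by term, whereas the paper packages the same computation as $\mathbb{E}[\mathbf{X}^T A \mathbf{X}]=\mu^T A\mu+\mathrm{tr}(A\Sigma)$ applied to the covariance $\sigma^2 I+\sigma^2\mathbf{1}\mathbf{1}^T$.
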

\begin{proof}\label{prf:mse_noise}
We expand the means squared error after which we proceed term by term
\begin{align*}
    \mathbb{E}[\Vert \nabla_{S} \tilde{f}(x_0) - \nabla f(x_0) \Vert_2^2] &= \mathbb{E}[\Vert \nabla_{S} \tilde{f}(x_0) - \nabla_S f(x_0) + \nabla_S f(x_0) - \nabla f(x_0) \Vert_2^2] \\
    &= \mathbb{E}[\Vert \nabla_{S} \tilde{f}(x_0) - \nabla_S f(x_0)  \Vert_2^2] + \Vert \nabla_S f(x_0) - \nabla f(x_0) \Vert_2^2 \\
    &\quad + 2\mathbb{E}[\Big( \nabla_{S} \tilde{f}(x_0) - \nabla_S f(x_0) \Big)^T \Big( \nabla_S f(x_0) - \nabla f(x_0)\Big) ]. 
\end{align*}
By linearity of the simplex gradient we have that $\mathbb{E}[\Vert \nabla_{S} \tilde{f}(x_0) - \nabla_S f(x_0)  \Vert_2^2] = \mathbb{E}[\Vert \nabla_S \epsilon(0) \Vert_2^2 ]$ where we write $\nabla_S \epsilon(0) = S^{-T} \delta \mathbf{\epsilon}(\mathcal{X})$ with $\delta \mathbf{\epsilon}(\mathcal{X}) = [\epsilon(1) - \epsilon(0), \dots, \epsilon(d) - \epsilon(0)]^T$. We then have for the first term
\begin{align*} 
    \mathbb{E}[\Vert \nabla_S \epsilon \Vert_2^2] &= \mathbb{E}[\delta \mathbf{\epsilon}(\mathcal{X})^T S^{-1} S^{-T} \delta \mathbf{\epsilon}(\mathcal{X})] \\
    &=  \text{tr}\left(S^{-1}S^{-T} (\sigma^2 I + \sigma^2 \mathbf{1} \mathbf{1}^T)\right) \\
    &= \sigma^2 \Vert S^{-T} \Vert_F^2 + \sigma^2 \Vert S^{-T} \mathbf{1} \Vert_2^2.    
\end{align*}
where we used the result from \cite{kendrick2005stochastic} that for any random vector $\mathbf{X}$, with mean $\mu$ and covariance matrix $\Sigma$, and symmetric matrix $A$ we have $\mathbb{E}[\mathbf{X}^T A \mathbf{X}] = \mu^T A \mu + \text{tr}(A \Sigma)$.
The second term $\Vert \nabla_S f(x_0) - \nabla f(x_0) \Vert_2^2$ is already in the correct form. Lastly, the cross term $\mathbb{E}[\Big( \nabla_{S} \tilde{f}(x_0) - \nabla_S f(x_0) \Big)^T \Big( \nabla_S f(x_0) - \nabla f(x_0)\Big)]$ is easily seen to equal zero since $\mathbb{E}[\nabla_{S} \tilde{f}(x_0)] = \nabla_S f(x_0)$. 
\end{proof}

While the noise error only depends on the noise level and distribution of points, the approximation error also depends on the function structure. Since we do not have access to the true function $f$, we need use some approximation of $f$. However, even with an approximation of $f$, the MSE may be difficult to minimize directly. Instead, we introduce a bound on the approximation error.

\subsection{Approximation Error Bound}
While there exist multiple methods by which to estimate the approximation error, we proceed via a Taylor argument as done in many other works \cite{hare2022error, burden2015numerical, bruno2012numerical}. The crucial difference is that previous works upper bound the second order term, while we retain the second order term exactly and only bound the higher order terms.
\begin{lemma}\label{lma:usg_error}
Let $f \in \mathcal{C}^2$ and let $\Vert S \Vert_2 \leq h$ for some $h \in \mathbb{R}_+$. Then we have 
\begin{align*}
    \Vert\nabla_S f(x_0)  - \nabla f(x_0) \Vert_2^2 &= \frac{1}{4}  \Vert S^{-T} [s_1 \nabla^2 f(x_0) s_1, \dots, s_d \nabla^2 f(x_0) s_d]^T \Vert_2^2 + O(h^3).
\end{align*}
\end{lemma}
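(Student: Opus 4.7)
The plan is to apply a componentwise Taylor expansion to $f(x_i)=f(x_0+s_i)$, assemble the results into a vector identity, premultiply by $S^{-T}$, and then control the remainder in the squared norm.

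First, since $f\in\mathcal{C}^2$ and each difference vector satisfies $\|s_i\|_2\le\|S\|_2\le h$, Taylor's theorem gives
\begin{align*}
    f(x_i)-f(x_0)=\nabla f(x_0)^T s_i+\tfrac{1}{2}s_i^T\nabla^2 f(x_0)s_i+r_i,
\end{align*}
where $|r_i|=O(h^3)$. Stacking these identities columnwise and using $S^T\nabla f(x_0)=[\nabla f(x_0)^Ts_1,\ldots,\nabla f(x_0)^Ts_d]^T$, we obtain
\begin{align*}
    \delta\mathbf{f}(\mathcal{X})=S^T\nabla f(x_0)+\tfrac{1}{2}v+r,
\end{align*}
with $v=[s_1^T\nabla^2 f(x_0)s_1,\ldots,s_d^T\nabla^2 f(x_0)s_d]^T$ and $\|r\|_2=O(h^3)$. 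Note that $\|v\|_2=O(h^2)$ since each coordinate is bounded by $\|\nabla^2 f(x_0)\|_2\,\|s_i\|_2^2$.

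Next I would premultiply by $S^{-T}$, using the definition of the simplex gradient $\nabla_S f(x_0)=S^{-T}\delta\mathbf{f}(\mathcal{X})$ (applied to the noiseless function $f$), to obtain
\begin{align*}
    \nabla_S f(x_0)-\nabla f(x_0)=\tfrac{1}{2}S^{-T}v+S^{-T}r.
\end{align*}
Squaring the Euclidean norm and expanding the inner product then yields
\begin{align*}
    \|\nabla_S f(x_0)-\nabla f(x_0)\|_2^2=\tfrac{1}{4}\|S^{-T}v\|_2^2+\langle S^{-T}v,S^{-T}r\rangle+\|S^{-T}r\|_2^2.
\end{align*}

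It remains to show the last two terms are absorbed into $O(h^3)$. By Cauchy--Schwarz, $|\langle S^{-T}v,S^{-T}r\rangle|\le\|S^{-T}\|_2^2\,\|v\|_2\,\|r\|_2=O(h^3)$ under the usual scaling $\|S^{-T}\|_2=O(1/h)$ of a well-conditioned simplex, and similarly $\|S^{-T}r\|_2^2=O(h^4)$. The dominant leftover error is therefore $O(h^3)$, which is exactly the claimed remainder.

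The main subtlety is the implicit dependence of the remainder on the conditioning of $S$: the $O(h^3)$ statement is only clean if one tracks how $\|S^{-T}\|_2$ scales with $h$. Since the subsequent optimization problem in the paper will implicitly control this conditioning (via the noise-error term $\sigma^2\|S^{-T}\|_F^2$ from Proposition \ref{prop:mse_noise}), the $O(h^3)$ absorption is legitimate in the regime of interest, and the leading-order curvature-aligned expression $\tfrac{1}{4}\|S^{-T}v\|_2^2$ is the quantity that should drive sample set construction.
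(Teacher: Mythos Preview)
Your argument is correct and follows essentially the same route as the paper: componentwise Taylor expansion, stacking into $\delta\mathbf{f}=S^T\nabla f(x_0)+\tfrac{1}{2}v+r$, premultiplying by $S^{-T}$, and squaring. You are in fact more careful than the paper in explicitly isolating the cross term via Cauchy--Schwarz and in flagging the implicit scaling assumption $\|S^{-T}\|_2=O(1/h)$, which the paper's proof silently invokes when it writes $S^{-T}\cdot O(\|S\|_2^3)=O(\|S\|_2^2)$.
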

The proof is in Appendix \ref{app:taylor_bound} and is a simple application of Taylor's Theorem.

\subsection{Objective Function}
When using the bound on the approximation error and the noise error, the only variables are the noise level $\sigma$, the given Hessian matrix $H$, and the sample difference set $S$. To control the error due to higher-order effects, the hyperparameter $h$ is introduced. The optimization problem is then given by the following.
\begin{definition}\label{defn:optimization_problem}
    Let $H \in \mathbb{R}^{d \times d}$ be symmetric, $\sigma \in \mathbb{R}_+$ and $h \in \mathbb{R}_+$. Let
    \begin{align*}
        \textbf{AE} = \frac{1}{4} \Vert S^{-T} [s_1^T H s_1, \dots, s_d^T H s_d]^T \Vert_2^2
    \end{align*} be the approximation error and 
    \begin{align*}
        \textbf{NE} = \sigma^2 \Vert S^{-1} \Vert_2^2 + \sigma^2 \Vert S^{-T} \mathbf{1} \Vert_2^2
    \end{align*}
    be the noise error. We then define the function $l_{H, \sigma, h} : \mathbb{R}^{d \times d} \rightarrow [0, \infty]$ as
    \begin{align*}
        l_{H, \sigma, h}(S) &= 
        \begin{cases}
                 \textbf{AE} + \textbf{NE} & \text{if } h \geq \Vert S \Vert_2 \text{ and } S \in GL_d\\
                \infty & \text{, else.}
        \end{cases}
    \end{align*}
and its reparametrization in SVD form
\begin{align*}
        l_{H, \sigma, h}(U, \Sigma, V) &= 
        \begin{cases}
                l_{H, \sigma, h}(U \Sigma V^T) & \text{if } U, V \text{ orthogonal, $D$ diagonal, and $D \succeq 0$.} \\
                \infty & \text{, else.}
        \end{cases}
    \end{align*}
\end{definition}
When clear from context, we drop the $H, \sigma, h$ subscript and write $l(S)$ or $l(U, \Sigma, V)$.

\begin{remark}
Defining the objective function on the extended real number line and setting it to infinity outside of the valid input makes the optimization problem well defined. Specifically, a minimizer exists as the objective function is continuous and defined on a compact domain.
\end{remark}

The following Proposition significantly simplifies the objective function and allows us solve the optimization problem in the basis in which $H$ is diagonal.
\begin{proposition}\label{prop:rot_invariant}
Let $H$ be a symmetric matrix and let $R$ be an orthogonal matrix such that $R^T H R = D$. Defining $S' = R^T S$ then $l_{H, \sigma}(S) = l_{D, \sigma}(S')$. Furthermore, $l_{D, \sigma}(S) = l_{-D, \sigma}(S)$.
\end{proposition}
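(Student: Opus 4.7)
The plan is a direct computation that tracks how each ingredient of $l_{H,\sigma,h}$ transforms under the orthogonal change of basis $S = R S'$ (equivalently $S' = R^T S$). I would work piece by piece on the approximation error, the noise error, and the constraint $\|S\|_2 \le h$, exploiting that orthogonal matrices preserve the $2$-norm on vectors, the Frobenius norm on matrices, and invertibility.

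First I would establish the basic identities arising from $S = RS'$. Since $R$ is orthogonal, $(RS')^{-T} = R S'^{-T}$ and $(RS')^{-1} = S'^{-1} R^T$; also the $i$th column of $S$ is $s_i = R s_i'$, so using $R^T H R = D$ we get the key identity
\begin{equation*}
s_i^T H s_i \;=\; s_i'^T R^T H R s_i' \;=\; s_i'^T D s_i',
\end{equation*}
for every $i$. Hence the vector $v := [s_1^T H s_1,\dots,s_d^T H s_d]^T$ coincides with $v' := [s_1'^T D s_1',\dots,s_d'^T D s_d']^T$.

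Next I would plug into the approximation error:
\begin{equation*}
\textbf{AE}(S; H) \;=\; \tfrac{1}{4}\|S^{-T} v\|_2^2 \;=\; \tfrac{1}{4}\|R S'^{-T} v'\|_2^2 \;=\; \tfrac{1}{4}\|S'^{-T} v'\|_2^2 \;=\; \textbf{AE}(S'; D),
\end{equation*}
where the third equality uses orthogonality of $R$. For the noise error, $\|S^{-1}\|_F^2 = \|S'^{-1} R^T\|_F^2 = \|S'^{-1}\|_F^2$ by the unitary invariance of the Frobenius norm, and $\|S^{-T}\mathbf{1}\|_2^2 = \|R S'^{-T}\mathbf{1}\|_2^2 = \|S'^{-T}\mathbf{1}\|_2^2$. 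Finally, $\|S\|_2 = \|R S'\|_2 = \|S'\|_2$, so the constraint $\|S\|_2 \le h$ is equivalent to $\|S'\|_2 \le h$, and $S \in GL_d$ iff $S' \in GL_d$. Adding everything up, $l_{H,\sigma,h}(S) = l_{D,\sigma,h}(S')$ on both the finite and infinite parts of the piecewise definition.

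For the second assertion, I would observe that the noise error and the domain constraints do not involve $H$ at all, so only the approximation error needs attention. Replacing $D$ by $-D$ changes $v$ to $-v$, and
\begin{equation*}
\tfrac{1}{4}\|S^{-T}(-v)\|_2^2 \;=\; \tfrac{1}{4}\|S^{-T} v\|_2^2,
\end{equation*}
so $l_{D,\sigma,h}(S) = l_{-D,\sigma,h}(S)$. There is no real obstacle here beyond careful bookkeeping of transposes and inverses; the content is just that all quantities in $l$ are built from orthogonally invariant norms, together with the sign-insensitivity of the quadratic forms $s_i^T D s_i$ in the squared norm.
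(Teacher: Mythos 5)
Your proof is correct and follows essentially the same route as the paper: a term-by-term verification that the approximation error, the noise error, and the domain constraints are each invariant under the orthogonal change of basis, plus the observation that negating $D$ only flips the sign of the vector inside a squared norm. Your version is in fact slightly more complete, since you explicitly check the constraint $\Vert S \Vert_2 \le h$ and invertibility, which the paper's proof leaves implicit.
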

\begin{proof}\label{proof:rot_invariant}
Note that $\Vert S^{-1} \Vert_F = \text{tr}( R^T S'^{-T} S'^{-1} R)$ and by the cyclic property of trace $\Vert S^{-1} \Vert_F = \Vert S'^{-1} \Vert_F$. Furthermore, since orthogonal operations do not change norms $\Vert S^{-T} \mathbf{1} \Vert_2^2 = \Vert S'^{-T} \mathbf{1} \Vert_2^2$. For the approximation error note that $s_i^T H s_i = s_i'^T D s_i'$. As established, orthogonal matrices do not change norms and hence $\Vert S^{-T} [s_1^T D s_1, \dots, s_d^T D s_d]^T \Vert_2^2 = \Vert S'^{-T} [s_1'^T D s_1', \dots, s_d'^T D s_d']^T \Vert_2^2$. The invariance to negating the diagonal is due to the absolute homogeneity property of the norm. 
\end{proof}

While more manageable, the objective function is not trivial to minimize due its remaining non-convexity and non-linearity. This is unlike standard forward difference where the difference matrix has orthogonal columns which allows the optimization problem to separate into a sum of one-dimensional convex functions which are easy to solve. 

\section{Curvature Aligned Simplex Gradient}\label{sec:mainThm}
We now present the results and methods to compute 
\begin{align*}
    S^* = \argmin_{S \in \mathbb{R}^{d \times d}} \ l_{H, \sigma, h}(S)
\end{align*}
where $l_{H, \sigma, h}(S)$ is given in Definition \ref{defn:optimization_problem}. We call the simplex gradient which uses $S^*$ the \textit{Curvature Aligned Simplex Gradient} (CASG), since our method naturally aligns difference vectors along directions of low curvature. 

The results will be purely theoretical, as we assume a Hessian $H$ and noise-level $\sigma$ to be given. In Section \ref{sec:Numerics} we show how to construct CASG for real-world applications using our global model framework. We make this precise with the following assumption on the parameters.
\begin{assumption}\label{as:exact_D_sig}
The matrix $H \in \mathbb{R}^{d \times d}$ is symmetric, and $\sigma, h \in \mathbb{R}_+$.
\end{assumption}

Recall that for symmetric matrices we may write $H = R D R^T$ with orthogonal matrix $R$ and diagonal matrix $D$. To simplify the presentation of the problem, we use Proposition \ref{prop:rot_invariant} and general properties of diagonalizable matrices to make the following assumption on $H$ and $D$ without any loss of generality.
\begin{assumption}\label{as:D}
$H$ has positive trace and $D$ is in increasing order.
\end{assumption}

We now introduce the final assumption we will be using. 
\begin{assumption}\label{as:pow}
The dimension of the problem is a power of two. 
\end{assumption}
While Assumption \ref{as:pow} may seem artificial, it appears naturally when deriving the analytic solution. We discuss in Section \ref{sec:subspace} how to generalize the results from this section to arbitrary dimensions.

We now present the results used to construct a minimizer $S^*$. 
\begin{theorem}\label{thm:SVD_Opt}
Assume that for parameters $H, \sigma$ and $h$ Assumptions \ref{as:exact_D_sig}-\ref{as:pow} hold and recall the eigen-decomposition $H = R D R^T$. The global minimum of $l_{H, \sigma, h}$, $S^*$, is given by $R U^* \Sigma^* (V^*)^T$ where
\begin{itemize}
\item $U^* = I$
\item $\Sigma^*$ is the solution to the following optimization problem
\begin{align*}
        \Sigma^* = \argmin_{\Sigma}& 
        \begin{cases}
            \frac{(\sum_{i = 1}^d D_i \Sigma^2_i)^2}{4 d \Sigma_{max}^2} + \sigma^2 \sum_{i =0}^d \frac{1}{\Sigma^2_i} + \sigma^2 \frac{1}{\Sigma_{max}^2} d & \text{if } h I \succeq \Sigma \succ 0 \\
            \infty & \text{otherwise.}
        \end{cases}
    \end{align*}
\item $V^* = M_{k}$ where $k = \argmax_{k} \Sigma_k$. Recall that $M_k$ is the Hadamard matrix with the entries of the $k$th row being $1/\sqrt{d}$.
\end{itemize}
\end{theorem}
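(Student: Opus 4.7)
The plan is to reduce to a diagonal Hessian, parameterize $S$ by its SVD, and establish two matching lower bounds that force the stated structure. By Proposition~\ref{prop:rot_invariant}, writing $H=RDR^T$ and $S'=R^T S$ reduces $l_{H,\sigma,h}(S)$ to $l_{D,\sigma,h}(S')$ with $S^*=RS'^*$, and Assumption~\ref{as:D} is without loss by the sign invariance in the same proposition. Writing the SVD $S'=U\Sigma V^T$ with $U,V$ orthogonal and $\Sigma\succ 0$ diagonal, the spectral-norm constraint becomes $\Sigma\preceq hI$, and a direct computation splits the objective into
\begin{align*}
\Vert S'^{-1}\Vert_F^2 = \sum_i \Sigma_i^{-2}, \qquad \Vert S'^{-T}\mathbf 1\Vert_2^2 = \sum_i (V^T\mathbf 1)_i^2/\Sigma_i^2,
\end{align*}
\begin{align*}
\mathrm{AE} = \tfrac14 \Vert \Sigma^{-1} V^T w\Vert_2^2, \qquad w = \mathrm{diag}(V\Sigma U^T D U\Sigma V^T).
\end{align*}

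For the noise term, $\sum_i (V^T\mathbf 1)_i^2 = d$ together with $1/\Sigma_i^2 \ge 1/\Sigma_{\max}^2$ gives $\Vert\Sigma^{-1} V^T\mathbf 1\Vert_2^2 \ge d/\Sigma_{\max}^2$, with equality iff $V^T\mathbf 1 = \sqrt d\, e_k$ for $k=\argmax_i\Sigma_i$. For the approximation error, the same pointwise inequality together with $\Vert V^T w\Vert_2 = \Vert w\Vert_2$ (as $V$ is orthogonal) and the Cauchy--Schwarz bound $\Vert w\Vert_2^2 \ge (\mathbf 1^T w)^2/d$ yields
\begin{align*}
\mathrm{AE} \;\ge\; \frac{(\mathbf 1^T w)^2}{4d\,\Sigma_{\max}^2} \;=\; \frac{\mathrm{tr}(U^T D U\,\Sigma^2)^2}{4d\,\Sigma_{\max}^2},
\end{align*}
since $\mathbf 1^T w = \mathrm{tr}(S'^T D S') = \mathrm{tr}(U^T D U\,\Sigma^2)$. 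Equality in the full chain requires $V^T w$ supported only at index $k$, $w$ a constant vector, and $V^T\mathbf 1 = \sqrt d\,e_k$; concretely this means the $k$th column of $V$ equals $\mathbf 1/\sqrt d$, $V_{ij}^2=1/d$ for all $i,j$, and $B := \Sigma U^T D U\Sigma$ is diagonal. Under Assumption~\ref{as:pow}, the Sylvester Hadamard matrix $M_k$ meets the first two conditions, and $B$ is diagonal precisely when $U$ is a signed permutation of the standard basis (the eigenbasis of $D$).

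Any signed permutation $\pi$ gives $\mathrm{tr}(U^T D U\,\Sigma^2) = \sum_j D_j \Sigma_{\pi^{-1}(j)}^2$, which is identical to $\sum_j D_j \tilde\Sigma_j^2$ after relabeling $\tilde\Sigma_j := \Sigma_{\pi^{-1}(j)}$; since the scalar subproblem in the theorem is over diagonal $\Sigma\in(0,h]^d$ with no ordering constraint, every such permutation is absorbed by a relabeling of the optimization variable, allowing us to set $U=I$ without loss. Substituting $(U,V) = (I, M_k)$ reduces the global objective to
\begin{align*}
\frac{\bigl(\sum_i D_i\Sigma_i^2\bigr)^2}{4d\,\Sigma_{\max}^2} + \sigma^2\sum_i\frac{1}{\Sigma_i^2} + \sigma^2\frac{d}{\Sigma_{\max}^2},
\end{align*}
which is the scalar optimization stated in the theorem; its minimizer $\Sigma^*$ then determines $k = \argmax_i\Sigma_i^*$ and hence $V^* = M_k$. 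The hardest step is verifying that all equality conditions for the approximation-error bound hold simultaneously for a single orthogonal $V$: the twin requirements $V^T\mathbf 1 = \sqrt d\,e_k$ and $V_{ij}^2 = 1/d$ are exactly the defining properties of a Hadamard matrix, and Assumption~\ref{as:pow} is invoked precisely to guarantee that $M_k$ exists via the Sylvester construction. Without it the bound cannot be saturated by a single $V$ and one must relax to the eCASG construction of Section~\ref{sec:subspace}.
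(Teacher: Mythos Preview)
Your reduction to diagonal $D$, the lower bound $l(U,\Sigma,V)\ge\underline l(U,\Sigma)$, and the verification that $(U,V)=(I,M_k)$ saturates it are all correct and match the paper's Propositions~\ref{prop:obj_lower_bound} and~\ref{prop:optV}. The gap is in how you dispose of $U$. You assert that equality in the chain forces $B=\Sigma U^TDU\Sigma$ to be diagonal and hence $U$ to be a signed permutation, then relabel to $U=I$. But even granting this necessity claim (which you state as ``concretely this means'' without proof, and which is in fact only a sufficient condition), it does not finish the argument: knowing that the bound $l\ge\underline l$ is \emph{tight} only at permutation $U$ does not imply that permutations \emph{minimize} $\underline l$ itself over orthogonal $U$. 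What you have actually shown is
\[
\min_\Sigma\underline l(I,\Sigma)\;=\;\min_\Sigma l(I,\Sigma,M_k)\;\ge\;\min_{U,\Sigma,V}l\;\ge\;\min_{U,\Sigma}\underline l(U,\Sigma),
\]
and the sandwich closes only if $\min_{U,\Sigma}\underline l=\min_\Sigma\underline l(I,\Sigma)$, which you never address.

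This is not a technicality. When $D$ is indefinite, a non-permutation $U_0$ can make $\mathrm{tr}(U_0^TDU_0\,\Sigma^2)=0$, annihilating the approximation term of $\underline l$ and potentially undercutting $\underline l(I,\Sigma)$ at that same $\Sigma$. Ruling this out is precisely the content of the paper's Proposition~\ref{prop:Uopt}: a Lagrangian computation shows the extrema of $a(U,\Sigma)=\mathrm{tr}(U^TDU\,\Sigma^2)$ over orthogonal $U$ are either permutation matrices or zeros of $a$, and a further nontrivial lemma establishes that at the joint minimizer $(U',\Sigma')$ of $\underline l$ one has $a(U_{\mathrm{ext}},\Sigma')\ge 0$ for every permutation extremum (under Assumption~\ref{as:D}), so the minimizing permutation matches or beats any such $U_0$. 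Your permutation-relabeling remark only shows that permutations are interchangeable with one another, not that they dominate all orthogonal $U$; supplying this missing step is the actual crux of the theorem, not the Hadamard construction you flag as ``the hardest step.''
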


The optimization problem for $\Sigma^*$ can be solved without the need of numerical methods. Algorithm \ref{alg:get_sigma_star} provides the construction of $\Sigma^*$.
\begin{proposition}\label{prop:sig_alg}
    The optimization problem for $\Sigma^*$ as given in Theorem \ref{thm:SVD_Opt} is well defined, has a unique solution and can be computed in $\mathcal{O}(d)$ time. 
\end{proposition}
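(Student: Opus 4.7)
The plan proceeds in three stages: existence, uniqueness, and the $\mathcal{O}(d)$ algorithm. For existence I would identify $\Sigma$ with its positive diagonal, giving feasible region $(0,h]^d$. The objective is continuous there and the $\sigma^2/\Sigma_i^2$ terms force it to $\infty$ as any coordinate approaches $0^+$, so every sub-level set lies in a compact box $[\epsilon,h]^d$ and the infimum is attained by Weierstrass.

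For the algorithm, I would parameterize by the pair $(k,t)$ with $k\in\argmax_i\Sigma_i$ and $t=\Sigma_k$, which splits the problem into $d$ smooth subproblems with the additional constraint $\Sigma_i\le t$. Writing $x_i=\Sigma_i^2$ and $A=\sum_i D_i x_i$, first-order stationarity in $x_j$ for $j\ne k$ gives
\begin{align*}
\frac{A D_j}{2dt^2}=\frac{\sigma^2}{x_j^2},
\end{align*}
so the interior solution is $x_j^*(A)=t\sigma\sqrt{2d/(A D_j)}$ (whenever $A D_j>0$), and otherwise the constraint $x_j=t^2$ binds. Substituting back yields a scalar self-consistency equation $A=D_k t^2+\sum_{j\ne k}D_j x_j^*(A)$. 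After a finite case split on the signs of $D_j$ (manageable because Assumption \ref{as:D} has already sorted $D$), the right-hand side is monotone in $A$, giving a unique closed-form root. With $x_j^*$ then a closed-form function of $t$, the outer objective reduces to $\alpha(k)t^2+\beta(k)/t^2+\gamma(k)$ on $(0,h]$, whose unique minimizer is $t^*=\min\bigl(h,(\beta/\alpha)^{1/4}\bigr)$ (or $t^*=h$ when $\alpha=0$). Uniqueness of $\Sigma^*$ conditional on $k$ follows from strict convexity of the inner problem in $x$ on each sign region and of the outer problem in $t^2$.

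The main obstacle is sharpening the apparent $\mathcal{O}(d^2)$ bound (one subproblem per candidate $k$) to $\mathcal{O}(d)$. My plan is to show that only one value of $k$ can be optimal, identified analytically from the ordered spectrum $D_1\le\cdots\le D_d$, so that a single subproblem actually needs to be solved. Concretely, I would compare the reduced one-dimensional objective for adjacent candidates $k$ and $k+1$ and prove strict monotonicity along the sorted spectrum, pinning the optimal $k$ to one end. Once $k$ is fixed, solving the self-consistency equation for $A$, evaluating the closed form for $t^*$, and reconstructing $\Sigma^*$ from $x_j^*(A)$ each require a single linear pass through the indices, totalling $\mathcal{O}(d)$ arithmetic operations and establishing the claim.
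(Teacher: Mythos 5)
There are two genuine gaps. First, your reduction of the outer problem to $\alpha(k)t^2+\beta(k)/t^2+\gamma(k)$ does not hold: the inner stationary values $x_j^*=t\sigma\sqrt{2d/(AD_j)}$ depend on $t$ only through the self-consistency variable $A$, and $A(t)$ solves $A^{3/2}=D_k t^2 A^{1/2}+t\sigma\sqrt{2d}\sum_{j\ne k}\sqrt{D_j}$, so the eliminated objective is not a Laurent polynomial in $t$ and has no closed-form minimizer of the form $\min(h,(\beta/\alpha)^{1/4})$. The paper instead solves the full coupled stationarity system, which leads to a quartic (Lemma \ref{lma:solve_pos_a}) or a depressed cubic (Lemma \ref{lma:solve_indef_a}) in $\sqrt{a}$ whose unique admissible root must be identified by hand. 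Second, and more seriously, you decide which box constraints bind by a case split on the signs of the $D_j$, but the active set is not determined by signs: an index with small \emph{positive} $D_j$ can have unconstrained stationary value $\sigma\sqrt{2d\lambda_1/(aD_j)}>h^2$, forcing its constraint to bind. This is exactly where the combinatorial difficulty and the threatened $\mathcal{O}(2^d)$ (or, after pruning, $\mathcal{O}(d^2)$) cost live, and your proposal never confronts it.

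The paper's resolution of both issues is Proposition \ref{prop:sig_dec}: at the optimum $\lambda^*=\mathrm{diag}((\Sigma^*)^2)$ is decreasing while $D$ is increasing, and $\lambda_i^*=h^2$ whenever $D_i\le 0$. This simultaneously pins your index $k$ to $1$ (so only one subproblem is ever solved, which is the fact you planned to prove by comparing adjacent candidates but did not) and forces the active set to be a prefix $A_J=\{1,\dots,J\}$ with $J\ge|\{i:D_i\le 0\}|$. Linear time then follows from scanning $J$ upward while maintaining the running sums $c_1,c_2$ incrementally and checking feasibility of each candidate in $\mathcal{O}(1)$, since by monotonicity only the $(J{+}1)$-th entry needs testing (Proposition \ref{prop:linear_runtime}). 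Without an analogue of this ordering result, your plan cannot certify either correctness of the sign-based case split or the $\mathcal{O}(d)$ bound. Your existence argument via coercivity and Weierstrass is fine, and your observation that the objective is convex on each region $\{x_k=\max_i x_i\}$ (since $A^2/x_k$ is a composition of the jointly convex map $(y,z)\mapsto y^2/z$ with affine maps) is a legitimate alternative route to conditional uniqueness, but neither substitutes for the structural ordering lemma that the algorithm actually rests on.
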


\subsection{Algorithm and Runtime}
\begin{algorithm}[ht]
\caption{CASG}\label{alg:optS}
\begin{algorithmic}[1]
\Procedure{Compute $S^*$}{$H, \sigma, h$}
\State $d \gets \text{Dimension of } H$
\State $R, D \gets \text{Diagonalize H; D a vector sorted in increasing order}$
\If{$\sum_{i = 1}^d D_i < 0$}
    \State \Return \text{Compute $S^*(-H, \sigma, h)$} \Comment{Call function with negated Hessian.} 
\EndIf
\State $\Sigma^* \gets \text{Get}\Sigma^*(D, \sigma, h)$ \Comment{In descending order.}
\State $V^* \gets \text{Hadamard matrix with first row all ones}$
\State $S_D^* \gets \Sigma^* (V^*)^T$ 
\State \Return $R S_D^*$
\EndProcedure
\end{algorithmic}
\end{algorithm}

Algorithm \ref{alg:optS} describes the procedure to construct the minimzer of the objective function where the problem follows Assumptions \ref{as:exact_D_sig} and \ref{as:pow}. The first six lines transform the input to align with Assumption \ref{as:D}. By Theorem \ref{thm:SVD_Opt} we have that $U^* =I$ leaving only the computation of $\Sigma^*$ and $V^*$. We obtain $\Sigma^*$ via the function call $\text{Get}\Sigma^*(D, \sigma, h)$, given by Algorithm \ref{alg:get_sigma_star}, to obtain $\Sigma^*$. To transform the solution to the original space of $H$ we use Proposition \ref{prop:rot_invariant} to perform a change of basis on $S_D^*$ which yields $S^*$.

The runtime of this algorithm is dominated by the eigendecomposition of $H$, which has runtime $\mathcal{O}(d^3)$. The computation of $\Sigma^*$ is done in linear time by Proposition \ref{prop:sig_alg}. The Hadamard matrix is computed via the Sylvester's Construction which has runtime $\mathcal{O}(d^2)$. 

While the runtime of CASG is larger than the linear runtime of many finite difference methods, we note that for numerical differentiation the assumption is that function evaluations are expensive. Hence, we trade computational speed for accuracy in gradient estimates and hence fewer function evaluations.

\subsection{Intuition and Initial Numerics}\label{sec:intuition}
\begin{figure}
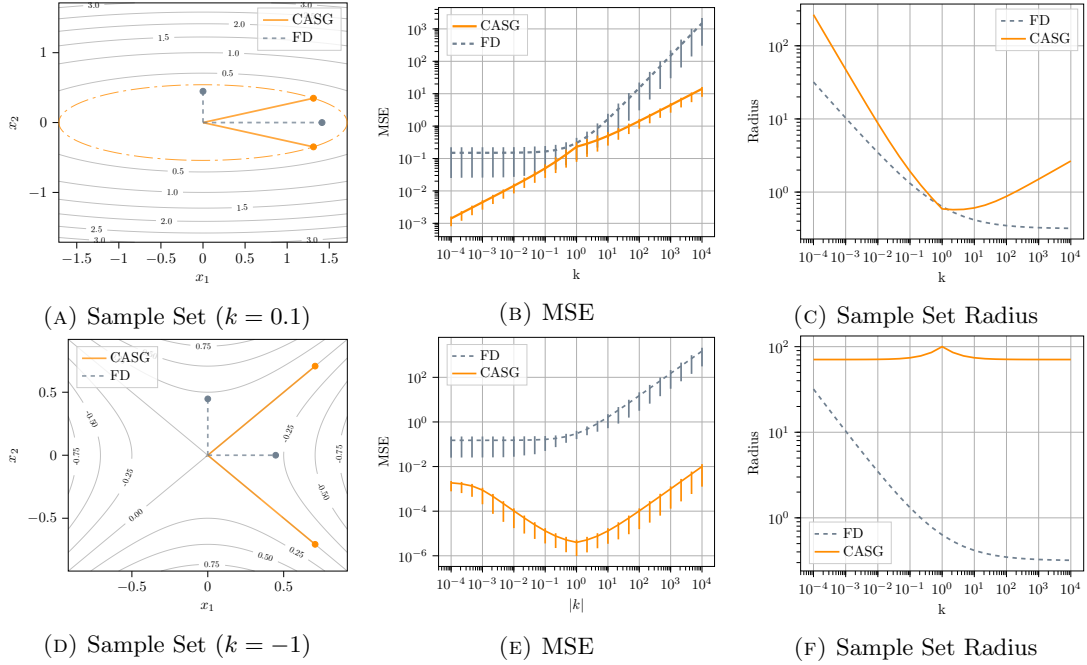

\begin{minipage}[c]{0.32\textwidth}
    \centering
    \resizebox{\textwidth}{!}{\input{figs/Intro/a_0.1_sig_0.1_2d}}
    \subcaption{Sample Set ($k=0.1$)}
    \label{fig:vis_2d_0.1}
\end{minipage}\hfill
\begin{minipage}[c]{0.32\textwidth}
    \centering
    \resizebox{\textwidth}{!}{
\begin{tikzpicture}

\definecolor{darkgray176}{RGB}{176,176,176}
\definecolor{darkorange}{RGB}{255,140,0}
\definecolor{lightgray204}{RGB}{204,204,204}
\definecolor{slategray}{RGB}{112,128,144}

\begin{axis}[
legend cell align={left},
legend style={
  fill opacity=0.8,
  draw opacity=1,
  text opacity=1,
  at={(0.03,0.97)},
  anchor=north west,
  draw=lightgray204
},
log basis x={10},
log basis y={10},
tick align=outside,
tick pos=left,
x grid style={darkgray176},
xlabel={k},
xmajorgrids,
xmin=3.98107170553497e-05, xmax=25118.8643150958,
xmode=log,
xtick style={color=black},
y grid style={darkgray176},
ylabel={MSE},
ymajorgrids,
ymin=0.000387780626451852, ymax=4551.78267445907,
ymode=log,
ytick style={color=black}
]
\path [draw=slategray, very thick]
(axis cs:0.0001,0.0250706296739161)
--(axis cs:0.0001,0.222661191955462);

\path [draw=slategray, very thick]
(axis cs:0.000215443469003188,0.0254840426282619)
--(axis cs:0.000215443469003188,0.222705378249929);

\path [draw=slategray, very thick]
(axis cs:0.000464158883361278,0.0256505401810127)
--(axis cs:0.000464158883361278,0.222881876986588);

\path [draw=slategray, very thick]
(axis cs:0.001,0.0257987878485915)
--(axis cs:0.001,0.222786117023253);

\path [draw=slategray, very thick]
(axis cs:0.00215443469003188,0.0260231678918288)
--(axis cs:0.00215443469003188,0.222892343783248);

\path [draw=slategray, very thick]
(axis cs:0.00464158883361277,0.0262278101672985)
--(axis cs:0.00464158883361277,0.223223227140672);

\path [draw=slategray, very thick]
(axis cs:0.01,0.0271539926019424)
--(axis cs:0.01,0.224444774558397);

\path [draw=slategray, very thick]
(axis cs:0.0215443469003188,0.0291227995343936)
--(axis cs:0.0215443469003188,0.227152374012732);

\path [draw=slategray, very thick]
(axis cs:0.0464158883361277,0.0322262665272487)
--(axis cs:0.0464158883361277,0.235252352260476);

\path [draw=slategray, very thick]
(axis cs:0.1,0.0363952250191387)
--(axis cs:0.1,0.24293676289293);

\path [draw=slategray, very thick]
(axis cs:0.215443469003188,0.0453427334583145)
--(axis cs:0.215443469003188,0.264207615072335);

\path [draw=slategray, very thick]
(axis cs:0.464158883361277,0.0625855084577217)
--(axis cs:0.464158883361277,0.317138958925778);

\path [draw=slategray, very thick]
(axis cs:1,0.0894358450539658)
--(axis cs:1,0.434249675988706);

\path [draw=slategray, very thick]
(axis cs:2.15443469003188,0.136456207532081)
--(axis cs:2.15443469003188,0.675934835399248);

\path [draw=slategray, very thick]
(axis cs:4.64158883361277,0.218039096903232)
--(axis cs:4.64158883361277,1.21141705053057);

\path [draw=slategray, very thick]
(axis cs:10,0.395413388152802)
--(axis cs:10,2.40459161480812);

\path [draw=slategray, very thick]
(axis cs:21.5443469003188,0.732604860202893)
--(axis cs:21.5443469003188,4.97889045845351);

\path [draw=slategray, very thick]
(axis cs:46.4158883361277,1.4686840472171)
--(axis cs:46.4158883361277,10.4650406983849);

\path [draw=slategray, very thick]
(axis cs:100,3.06269475530976)
--(axis cs:100,22.1242080282261);

\path [draw=slategray, very thick]
(axis cs:215.443469003188,6.61699143494287)
--(axis cs:215.443469003188,47.1480666202184);

\path [draw=slategray, very thick]
(axis cs:464.158883361277,14.0210487111525)
--(axis cs:464.158883361277,101.122727165642);

\path [draw=slategray, very thick]
(axis cs:1000,30.0158420049983)
--(axis cs:1000,217.42907518568);

\path [draw=slategray, very thick]
(axis cs:2154.43469003188,64.4039590904278)
--(axis cs:2154.43469003188,467.850980248647);

\path [draw=slategray, very thick]
(axis cs:4641.58883361277,141.023731049926)
--(axis cs:4641.58883361277,1008.82987297188);

\path [draw=slategray, very thick]
(axis cs:10000,302.723563489361)
--(axis cs:10000,2171.88223247336);

\path [draw=darkorange, very thick]
(axis cs:0.0001,0.000812702047368524)
--(axis cs:0.0001,0.00162779367448015);

\path [draw=darkorange, very thick]
(axis cs:0.000215443469003188,0.00119935664949563)
--(axis cs:0.000215443469003188,0.00238278809102601);

\path [draw=darkorange, very thick]
(axis cs:0.000464158883361278,0.00177986957289695)
--(axis cs:0.000464158883361278,0.00351119053685034);

\path [draw=darkorange, very thick]
(axis cs:0.001,0.00264554126526487)
--(axis cs:0.001,0.00517842109225748);

\path [draw=darkorange, very thick]
(axis cs:0.00215443469003188,0.0039445825479577)
--(axis cs:0.00215443469003188,0.00757918627213002);

\path [draw=darkorange, very thick]
(axis cs:0.00464158883361277,0.00574703726220615)
--(axis cs:0.00464158883361277,0.0111466101911726);

\path [draw=darkorange, very thick]
(axis cs:0.01,0.00839791304806945)
--(axis cs:0.01,0.0167758343799578);

\path [draw=darkorange, very thick]
(axis cs:0.0215443469003188,0.012380716174009)
--(axis cs:0.0215443469003188,0.0258583724155986);

\path [draw=darkorange, very thick]
(axis cs:0.0464158883361277,0.0177594452235408)
--(axis cs:0.0464158883361277,0.0404828307406087);

\path [draw=darkorange, very thick]
(axis cs:0.1,0.0252534485069084)
--(axis cs:0.1,0.0659963064738792);

\path [draw=darkorange, very thick]
(axis cs:0.215443469003188,0.0356088283936337)
--(axis cs:0.215443469003188,0.110216044997327);

\path [draw=darkorange, very thick]
(axis cs:0.464158883361277,0.0523483097160043)
--(axis cs:0.464158883361277,0.187330718673323);

\path [draw=darkorange, very thick]
(axis cs:1,0.0789150667500906)
--(axis cs:1,0.32631794513325);

\path [draw=darkorange, very thick]
(axis cs:2.15443469003188,0.112075135670052)
--(axis cs:2.15443469003188,0.40307571735911);

\path [draw=darkorange, very thick]
(axis cs:4.64158883361277,0.163600213738521)
--(axis cs:4.64158883361277,0.51103431208763);

\path [draw=darkorange, very thick]
(axis cs:10,0.250526755257073)
--(axis cs:10,0.657069213545414);

\path [draw=darkorange, very thick]
(axis cs:21.5443469003188,0.379159386566379)
--(axis cs:21.5443469003188,0.869599485018276);

\path [draw=darkorange, very thick]
(axis cs:46.4158883361277,0.569051351746026)
--(axis cs:46.4158883361277,1.19568383802553);

\path [draw=darkorange, very thick]
(axis cs:100,0.832018224653835)
--(axis cs:100,1.67142345551356);

\path [draw=darkorange, very thick]
(axis cs:215.443469003188,1.22200849314043)
--(axis cs:215.443469003188,2.39018040165007);

\path [draw=darkorange, very thick]
(axis cs:464.158883361277,1.79851542479933)
--(axis cs:464.158883361277,3.50813112748241);

\path [draw=darkorange, very thick]
(axis cs:1000,2.60106380363168)
--(axis cs:1000,5.1393682027671);

\path [draw=darkorange, very thick]
(axis cs:2154.43469003188,3.77175486145884)
--(axis cs:2154.43469003188,7.49932415250254);

\path [draw=darkorange, very thick]
(axis cs:4641.58883361277,5.48536902519206)
--(axis cs:4641.58883361277,10.9664434337961);

\path [draw=darkorange, very thick]
(axis cs:10000,8.01769371796854)
--(axis cs:10000,16.1346916976195);

\addplot [ultra thick, darkorange]
table {%
0.0001 0.00141442561492706
0.000215443469003188 0.00207645191055442
0.000464158883361278 0.00304894840070202
0.001 0.00447881921948498
0.00215443469003188 0.00658524283200809
0.00464158883361277 0.00970088398456405
0.01 0.0143469331942249
0.0215443469003188 0.0213826044904683
0.0464158883361277 0.0323190589074853
0.1 0.0499776830006803
0.215443469003188 0.0798395669232646
0.464158883361277 0.132805688935633
1 0.23094010767585
2.15443469003188 0.28612118327651
4.64158883361277 0.370582442311505
10 0.499776830006804
21.5443469003188 0.6962930165947
46.4158883361277 0.99249258236515
100 1.4346933194225
215.443469003188 2.08999209803196
464.158883361277 3.05659895956765
1000 4.47881921948497
2154.43469003188 6.56876020258966
4641.58883361277 9.63803600156377
10000 14.1442561492699
};
\addlegendentry{CASG}
\addplot [ultra thick, slategray, dashed]
table {%
0.0001 0.150015
0.000215443469003188 0.150032316520351
0.000464158883361278 0.150069623832504
0.001 0.15015
0.00215443469003188 0.150323165203505
0.00464158883361277 0.150696238325042
0.01 0.1515
0.0215443469003188 0.153231652035048
0.0464158883361277 0.156962383250419
0.1 0.165
0.215443469003188 0.182316520350478
0.464158883361277 0.219623832504192
1 0.3
2.15443469003188 0.473165203504782
4.64158883361277 0.846238325041915
10 1.64999999999999
21.5443469003188 3.38165203504782
46.4158883361277 7.11238325041935
100 15.1500000000001
215.443469003188 32.4665203504765
464.158883361277 69.7738325041942
1000 150.150000000045
2154.43469003188 323.315203504406
4641.58883361277 696.388325042176
10000 1500.15000001357
};
\addlegendentry{FD}

\end{axis}

\end{tikzpicture}}
    \subcaption{MSE}
    \label{fig:err_2d}
\end{minipage}\hfill
\begin{minipage}[c]{0.32\textwidth}
    \centering
    \resizebox{\textwidth}{!}{
\begin{tikzpicture}

\definecolor{darkgray176}{RGB}{176,176,176}
\definecolor{darkorange}{RGB}{255,140,0}
\definecolor{lightgray204}{RGB}{204,204,204}
\definecolor{slategray}{RGB}{112,128,144}

\begin{axis}[
legend cell align={left},
legend style={fill opacity=0.8, draw opacity=1, text opacity=1, draw=lightgray204},
log basis x={10},
log basis y={10},
tick align=outside,
tick pos=left,
x grid style={darkgray176},
xlabel={k},
xmajorgrids,
xmin=3.98107170553497e-05, xmax=25118.8643150958,
xmode=log,
xtick style={color=black},
y grid style={darkgray176},
ylabel={Radius},
ymajorgrids,
ymin=0.22818710416578, ymax=372.354998109738,
ymode=log,
ytick style={color=black}
]
\addplot [very thick, slategray, dashed]
table {%
0.0001 31.9390043677006
0.000215443469003188 21.8605746663357
0.000464158883361278 14.9942204422375
0.001 10.3162277660168
0.00215443469003188 7.12914845659645
0.00464158883361277 4.95781659962962
0.01 3.47850542618522
0.0215443469003188 2.47066245604872
0.0464158883361277 1.78402703363891
0.1 1.31622776601684
0.215443469003188 0.9975198350748
0.464158883361277 0.780386649378116
1 0.632455532033676
2.15443469003188 0.531671235020026
4.64158883361277 0.463007692779045
10 0.416227766016838
21.5443469003188 0.384356972922634
46.4158883361277 0.362643654352966
100 0.347850542618522
215.443469003188 0.337772112917157
464.158883361277 0.330905758693059
1000 0.326227766016838
2154.43469003188 0.323040686707418
4641.58883361277 0.320869354850451
10000 0.319390043677006
};
\addlegendentry{FD}
\addplot [very thick, darkorange]
table {%
0.0001 266.027731366106
0.000215443469003188 149.671596054966
0.000464158883361278 84.254999009103
0.001 47.4866915608733
0.00215443469003188 26.831462114779
0.00464158883361277 15.2396019863778
0.01 8.74451476247172
0.0215443469003188 5.11075167148166
0.0464158883361277 3.07440692384265
0.1 1.92035161531998
0.215443469003188 1.24909364479616
0.464158883361277 0.843612351527944
1 0.588566191276542
2.15443469003188 0.574746404455325
4.64158883361277 0.579777911382256
10 0.607268501279462
21.5443469003188 0.662360892800081
46.4158883361277 0.750155756039905
100 0.874451476247172
215.443469003188 1.03826199689191
464.158883361277 1.24540614941463
1000 1.50166103878256
2154.43469003188 1.81521892673814
4641.58883361277 2.19687859073305
10000 2.66027731366106
};
\addlegendentry{CASG}
\end{axis}

\end{tikzpicture}}
    \subcaption{Sample Set Radius}
    \label{fig:radius_2d}
\end{minipage}\hfill

\begin{minipage}[c]{0.32\textwidth}
    \centering
    \resizebox{\textwidth}{!}{\input{figs/Intro/neg_a_1_sig_0.1_2d}}
    \subcaption{Sample Set ($k=-1$)}
    \label{fig:vis_2d_neg_1}
\end{minipage}\hfill
\begin{minipage}[c]{0.32\textwidth}
    \centering
    \resizebox{\textwidth}{!}{
\begin{tikzpicture}

\definecolor{darkgray176}{RGB}{176,176,176}
\definecolor{darkorange}{RGB}{255,140,0}
\definecolor{lightgray204}{RGB}{204,204,204}
\definecolor{slategray}{RGB}{112,128,144}

\begin{axis}[
legend cell align={left},
legend style={
  fill opacity=0.8,
  draw opacity=1,
  text opacity=1,
  at={(0.03,0.97)},
  anchor=north west,
  draw=lightgray204
},
log basis x={10},
log basis y={10},
tick align=outside,
tick pos=left,
x grid style={darkgray176},
xlabel={$\vert k \vert$},
xmajorgrids,
xmin=3.98107170553497e-05, xmax=25118.8643150958,
xmode=log,
xtick style={color=black},
y grid style={darkgray176},
ylabel={MSE},
ymajorgrids,
ymin=3.32463384532822e-07, ymax=6197.82747461685,
ymode=log,
ytick style={color=black}
]
\path [draw=slategray, very thick]
(axis cs:0.0001,0.0250588312740593)
--(axis cs:0.0001,0.222692850069456);

\path [draw=slategray, very thick]
(axis cs:0.000215443469003188,0.0254565327361613)
--(axis cs:0.000215443469003188,0.222742250205184);

\path [draw=slategray, very thick]
(axis cs:0.000464158883361278,0.0255215859928113)
--(axis cs:0.000464158883361278,0.222961262576761);

\path [draw=slategray, very thick]
(axis cs:0.001,0.0255913967510313)
--(axis cs:0.001,0.222957758262479);

\path [draw=slategray, very thick]
(axis cs:0.00215443469003188,0.0258260989069748)
--(axis cs:0.00215443469003188,0.223262488018683);

\path [draw=slategray, very thick]
(axis cs:0.00464158883361277,0.026136806939885)
--(axis cs:0.00464158883361277,0.223846923871053);

\path [draw=slategray, very thick]
(axis cs:0.01,0.0275745852475938)
--(axis cs:0.01,0.224632580009822);

\path [draw=slategray, very thick]
(axis cs:0.0215443469003188,0.0316118337944905)
--(axis cs:0.0215443469003188,0.226347468163999);

\path [draw=slategray, very thick]
(axis cs:0.0464158883361277,0.0356902591968598)
--(axis cs:0.0464158883361277,0.229388645468076);

\path [draw=slategray, very thick]
(axis cs:0.1,0.0502073498764723)
--(axis cs:0.1,0.233874858791721);

\path [draw=slategray, very thick]
(axis cs:0.215443469003188,0.0743757467520832)
--(axis cs:0.215443469003188,0.248798827229353);

\path [draw=slategray, very thick]
(axis cs:0.464158883361277,0.1162701505717)
--(axis cs:0.464158883361277,0.293391584174746);

\path [draw=slategray, very thick]
(axis cs:1,0.170872785428567)
--(axis cs:1,0.3982647531127);

\path [draw=slategray, very thick]
(axis cs:2.15443469003188,0.253934848585357)
--(axis cs:2.15443469003188,0.628327912383266);

\path [draw=slategray, very thick]
(axis cs:4.64158883361277,0.365467471012473)
--(axis cs:4.64158883361277,1.12279578887516);

\path [draw=slategray, very thick]
(axis cs:10,0.525489748042936)
--(axis cs:10,2.23229308432523);

\path [draw=slategray, very thick]
(axis cs:21.5443469003188,0.845939427945751)
--(axis cs:21.5443469003188,4.65776240292586);

\path [draw=slategray, very thick]
(axis cs:46.4158883361277,1.56661030022817)
--(axis cs:46.4158883361277,9.88644348497382);

\path [draw=slategray, very thick]
(axis cs:100,3.1735036030191)
--(axis cs:100,21.284492211169);

\path [draw=slategray, very thick]
(axis cs:215.443469003188,6.61066086343518)
--(axis cs:215.443469003188,45.7540121947405);

\path [draw=slategray, very thick]
(axis cs:464.158883361277,14.2816004777729)
--(axis cs:464.158883361277,98.350000959858);

\path [draw=slategray, very thick]
(axis cs:1000,30.7938977500185)
--(axis cs:1000,211.844601567272);

\path [draw=slategray, very thick]
(axis cs:2154.43469003188,66.4695615632991)
--(axis cs:2154.43469003188,456.534920364466);

\path [draw=slategray, very thick]
(axis cs:4641.58883361277,142.989480384543)
--(axis cs:4641.58883361277,983.157535467372);

\path [draw=slategray, very thick]
(axis cs:10000,309.73193311184)
--(axis cs:10000,2115.4305454567);

\path [draw=darkorange, very thick]
(axis cs:0.0001,0.000776291816667858)
--(axis cs:0.0001,0.00224698495626332);

\path [draw=darkorange, very thick]
(axis cs:0.000215443469003188,0.000646721485792401)
--(axis cs:0.000215443469003188,0.00204002384867061);

\path [draw=darkorange, very thick]
(axis cs:0.000464158883361278,0.00044035476795741)
--(axis cs:0.000464158883361278,0.00166792889667421);

\path [draw=darkorange, very thick]
(axis cs:0.001,0.000204729135267459)
--(axis cs:0.001,0.00111980103906782);

\path [draw=darkorange, very thick]
(axis cs:0.00215443469003188,7.58572864442662e-05)
--(axis cs:0.00215443469003188,0.000592562481601007);

\path [draw=darkorange, very thick]
(axis cs:0.00464158883361277,3.31677182716965e-05)
--(axis cs:0.00464158883361277,0.000281399910203329);

\path [draw=darkorange, very thick]
(axis cs:0.01,1.74011673268563e-05)
--(axis cs:0.01,0.000133729251234036);

\path [draw=darkorange, very thick]
(axis cs:0.0215443469003188,9.91440389144716e-06)
--(axis cs:0.0215443469003188,6.378173242798e-05);

\path [draw=darkorange, very thick]
(axis cs:0.0464158883361277,5.69485764127594e-06)
--(axis cs:0.0464158883361277,3.20367763456967e-05);

\path [draw=darkorange, very thick]
(axis cs:0.1,3.57655073618113e-06)
--(axis cs:0.1,1.72611189651656e-05);

\path [draw=darkorange, very thick]
(axis cs:0.215443469003188,2.22688735961844e-06)
--(axis cs:0.215443469003188,1.02766120294191e-05);

\path [draw=darkorange, very thick]
(axis cs:0.464158883361277,1.44420478914508e-06)
--(axis cs:0.464158883361277,7.06357684218474e-06);

\path [draw=darkorange, very thick]
(axis cs:1,9.74057363115545e-07)
--(axis cs:1,5.24727930512281e-06);

\path [draw=darkorange, very thick]
(axis cs:2.15443469003188,1.41905991492398e-06)
--(axis cs:2.15443469003188,7.05894842993064e-06);

\path [draw=darkorange, very thick]
(axis cs:4.64158883361277,2.17245907504294e-06)
--(axis cs:4.64158883361277,1.02964843823209e-05);

\path [draw=darkorange, very thick]
(axis cs:10,3.45286287683102e-06)
--(axis cs:10,1.71700781003913e-05);

\path [draw=darkorange, very thick]
(axis cs:21.5443469003188,5.44534436377368e-06)
--(axis cs:21.5443469003188,3.17999503100777e-05);

\path [draw=darkorange, very thick]
(axis cs:46.4158883361277,9.26064091463167e-06)
--(axis cs:46.4158883361277,6.35076658595913e-05);

\path [draw=darkorange, very thick]
(axis cs:100,1.56937127554393e-05)
--(axis cs:100,0.000133127917508866);

\path [draw=darkorange, very thick]
(axis cs:215.443469003188,3.06773997412931e-05)
--(axis cs:215.443469003188,0.000282179462630119);

\path [draw=darkorange, very thick]
(axis cs:464.158883361277,6.30064092221967e-05)
--(axis cs:464.158883361277,0.000601773172353279);

\path [draw=darkorange, very thick]
(axis cs:1000,0.000129576933900532)
--(axis cs:1000,0.00129552829300947);

\path [draw=darkorange, very thick]
(axis cs:2154.43469003188,0.000274514549793192)
--(axis cs:2154.43469003188,0.00278994592970017);

\path [draw=darkorange, very thick]
(axis cs:4641.58883361277,0.000588328305529667)
--(axis cs:4641.58883361277,0.00597072189970415);

\path [draw=darkorange, very thick]
(axis cs:10000,0.00125762035695606)
--(axis cs:10000,0.0128571654374788);

\addplot [very thick, slategray, dashed]
table {%
0.0001 0.150015
0.000215443469003188 0.150032316520351
0.000464158883361278 0.150069623832504
0.001 0.15015
0.00215443469003188 0.150323165203505
0.00464158883361277 0.150696238325042
0.01 0.1515
0.0215443469003188 0.153231652035048
0.0464158883361277 0.156962383250419
0.1 0.165
0.215443469003188 0.182316520350478
0.464158883361277 0.219623832504192
1 0.3
2.15443469003188 0.473165203504782
4.64158883361277 0.846238325041915
10 1.64999999999999
21.5443469003188 3.38165203504782
46.4158883361277 7.11238325041935
100 15.1500000000001
215.443469003188 32.4665203504765
464.158883361277 69.7738325041942
1000 150.150000000045
2154.43469003188 323.315203504406
4641.58883361277 696.388325042176
10000 1500.15000001357
};
\addlegendentry{FD}
\addplot [very thick, darkorange]
table {%
0.0001 0.00186281646480198
0.000215443469003188 0.0016793253644317
0.000464158883361278 0.00134952168126993
0.001 0.000881093044498962
0.00215443469003188 0.000458540475802403
0.00464158883361277 0.000218015988392368
0.01 0.000102980015980828
0.0215443469003188 4.94149600926176e-05
0.0464158883361277 2.45443038119698e-05
0.1 1.29999980000016e-05
0.215443469003188 7.64158874078102e-06
0.464158883361277 5.15443468572302e-06
1 4e-06
2.15443469003188 5.1544346891036e-06
4.64158883361277 7.64158882930391e-06
10 1.299999998e-05
21.5443469003188 2.45443468074871e-05
46.4158883361277 4.94158879052408e-05
100 0.000102999998
215.443469003188 0.000218443459720012
464.158883361277 0.0004671588402726
1000 0.00100299980000016
2154.43469003188 0.00215743376171572
4641.58883361277 0.0046445845247594
10000 0.01000298000016
};
\addlegendentry{CASG}
\end{axis}

\end{tikzpicture}}
    \subcaption{MSE}
    \label{fig:err_2d_neg_a}
\end{minipage}\hfill
\begin{minipage}[c]{0.32\textwidth}
    \centering
    \resizebox{\textwidth}{!}{
\begin{tikzpicture}

\definecolor{darkgray176}{RGB}{176,176,176}
\definecolor{darkorange}{RGB}{255,140,0}
\definecolor{lightgray204}{RGB}{204,204,204}
\definecolor{slategray}{RGB}{112,128,144}

\begin{axis}[
legend cell align={left},
legend style={
  fill opacity=0.8,
  draw opacity=1,
  text opacity=1,
  at={(0.03,0.03)},
  anchor=south west,
  draw=lightgray204
},
log basis x={10},
log basis y={10},
tick align=outside,
tick pos=left,
x grid style={darkgray176},
xlabel={k},
xmajorgrids,
xmin=3.98107170553497e-05, xmax=25118.8643150958,
xmode=log,
xtick style={color=black},
y grid style={darkgray176},
ylabel={Radius},
ymajorgrids,
ymin=0.239627933481057, ymax=133.285814820189,
ymode=log,
ytick style={color=black}
]
\addplot [very thick, slategray, dashed]
table {%
0.0001 31.9390043677006
0.000215443469003188 21.8605746663357
0.000464158883361278 14.9942204422375
0.001 10.3162277660168
0.00215443469003188 7.12914845659645
0.00464158883361277 4.95781659962962
0.01 3.47850542618522
0.0215443469003188 2.47066245604872
0.0464158883361277 1.78402703363891
0.1 1.31622776601684
0.215443469003188 0.9975198350748
0.464158883361277 0.780386649378116
1 0.632455532033676
2.15443469003188 0.531671235020026
4.64158883361277 0.463007692779045
10 0.416227766016838
21.5443469003188 0.384356972922634
46.4158883361277 0.362643654352966
100 0.347850542618522
215.443469003188 0.337772112917157
464.158883361277 0.330905758693059
1000 0.326227766016838
2154.43469003188 0.323040686707418
4641.58883361277 0.320869354850451
10000 0.319390043677006
};
\addlegendentry{FD}
\addplot [very thick, darkorange]
table {%
0.0001 70.737956325679
0.000215443469003188 70.7395246791251
0.000464158883361278 70.7434914945086
0.001 70.755009110111
0.00215443469003188 70.7896370897145
0.00464158883361277 70.8752428109617
0.01 71.063492624711
0.0215443469003188 71.4683569352641
0.0464158883361277 72.3331208367989
0.1 74.1619862193547
0.215443469003188 77.9565095941536
0.464158883361277 85.5616410903897
1 100
2.15443469003188 85.5616410478085
4.64158883361277 77.9565093306081
10 74.1619848844297
21.5443469003188 72.3331144337089
46.4158883361277 71.4683268063654
100 71.0633520318311
215.443469003188 70.8745930934919
464.158883361277 70.7868079196269
1000 70.746024637431
2154.43469003188 70.7270867236648
4641.58883361277 70.7182947994718
10000 70.7142135783181
};
\addlegendentry{CASG}
\end{axis}

\end{tikzpicture}}
    \subcaption{Sample Set Radius}
    \label{fig:radius_2d_neg_a}
\end{minipage}
\caption{Comparing forward differences (FD) with optimal difference vector lengths \cite{burden2015numerical} to the curvature aligned simplex gradient (CASG). The first row corresponds to a positive definite Hessian with $k \in [10^{-4}, 10^4]$. The bottom row represents an indefinite Hessian with $k \in [-10^4, -10^{-4}]$ and with $h=10^2$.}
\label{fig:intro_demo}
\end{figure}

We now present some initial numerics to demonstrate the benefit of CASG compared to forward differences on a toy example. To compare the methods, we estimate the gradient at the origin of a quadratic function corrupted by additive noise. Specifically, we consider the function $\tilde{f}_k(x, y) = k x^2 + y^2 + \epsilon$, where $k \in \mathbb{R}$ is used to control the conditioning of the Hessian and $\epsilon \sim \mathcal{N}(0, 0.01^2)$. We provide the exact Hessian and noise level to both CASG and FD. The mean squared error is then exactly given by the objective value.

To obtain the error bars for the mean squared error plots in Figures \ref{fig:err_2d} and \ref{fig:err_2d_neg_a} we perform 1,000 gradient estimates to then plot the 25th and 75th percentile values. The contour lines of the quadratic form given by the Hessian of the noiseless quadratic are depicted in Figures \ref{fig:vis_2d_0.1} and \ref{fig:vis_2d_neg_1} and represent the error of a directional gradient estimate using finite differences. In Figures \ref{fig:radius_2d} and \ref{fig:radius_2d_neg_a} we plot the average of the lengths of the two difference vectors.

When the Hessian is positive definite, we see in Figure \ref{fig:vis_2d_0.1} that CASG aligns the difference vectors along low curvature directions to achieve an approximation error comparable to FD while increasing the difference vector length to decrease the noise error. We confirm that this happens consistently for $H$'s with different condition numbers in Figure \ref{fig:radius_2d}. The overall improvement is apparent in Figure \ref{fig:err_2d} where the mean squared error is magnitudes lower whenever the Hessian is ill-conditioned and the method can exploit low curvature directions. 

When the Hessian is indefinite, in addition to exploiting low curvature directions we observe that when aligning the difference vectors close to the zero-set lines, as seen in Figure \ref{fig:vis_2d_neg_1}, the approximation error is close to zero. Increasing the length of the difference vectors will then decrease the noise error without increasing the approximation error drastically. In Figure \ref{fig:err_2d_neg_a} we see that the mean squared error is lowest when $k = -1$, which happens when the difference vectors are fully aligned with the zero-set lines. This is because there is no approximation error and the orthogonal layout of the difference vectors is optimal for the noise error. In the case $k=-1$, CASG also has the same performance as central differences, as neither will suffer from the approximation error.

\subsection{Proof of Theorem \ref{thm:SVD_Opt}}
By Proposition \ref{prop:rot_invariant}, it suffices to rotate a minimizer of $l_{D, \sigma, h}$ by $R$ to obtain a minimizer for $l_{H, \sigma, h}$. It remains to prove the result for diagonal matricies $D$. 

For the proof of Theorem \ref{thm:SVD_Opt} we need the following Propositions. We defer the proof of those to Appendix \ref{app:prop_main_thm_proof}. For the following results let Assumptions \ref{as:exact_D_sig}-\ref{as:pow} hold.

\begin{proposition}\label{prop:obj_lower_bound}
    For a diagonal matrix $D \in \mathbb{R}^{d \times d}$ and positive real numbers $\sigma, h \in \mathbb{R}_+$ define the function 
    \begin{align}
        \underline{l}_{D, \sigma, h}(U, \Sigma) =
    \begin{cases}
    \frac{a_{D, \sigma}(U, \Sigma)^2}{4d \Sigma_{max}^2} + \sigma^2 \sum_{i = 1}^d \frac{1}{\Sigma_i^2} + \sigma^2 \frac{d}{\Sigma_{max}^2} & \text{if } h I \succeq \Sigma \succ 0 \\
    \infty & \text{otherwise,}
    \end{cases}
    \end{align}
    with orthogonal $U \in \mathbb{R}^{d \times d}$, diagonal $\Sigma \in \mathbb{R}^{d \times d}$, and $a_{D, \sigma}(U, \Sigma) = \text{tr}(\Sigma U^T D U \Sigma)$.
    Then we have 
    \begin{align*}
        \underline{l}_{D, \sigma, h}(U, \Sigma) \leq \min_{V} l_{D, \sigma, h}(U, \Sigma, V).
    \end{align*}
\end{proposition}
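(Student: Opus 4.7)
The plan is to substitute the SVD form $S = U\Sigma V^T$ into each of the three terms making up $l_{D,\sigma,h}$ and to derive a pointwise lower bound (in $V$) for each term separately. Since $U$ and $V$ are orthogonal and $\Sigma$ is diagonal with nonnegative entries, the Frobenius norm is invariant under orthogonal conjugation and $\|S\|_2 = \Sigma_{\max}$, so the feasibility constraint $h I \succeq \Sigma \succ 0$ and $S \in GL_d$ is equivalent to the constraint appearing in the definition of $\underline{l}_{D,\sigma,h}$.

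For the $\sigma^2 \|S^{-1}\|_F^2$ term, I would note that $S^{-1} = V \Sigma^{-1} U^T$ and conclude equality with $\sigma^2 \sum_{i=1}^d 1/\Sigma_i^2$, so this contribution matches the corresponding term in $\underline{l}_{D,\sigma,h}$ exactly and independently of $V$. For the $\sigma^2 \|S^{-T}\mathbf{1}\|_2^2$ term, write $\|S^{-T}\mathbf{1}\|_2^2 = \|\Sigma^{-1} V^T \mathbf{1}\|_2^2 = \sum_i (V^T\mathbf{1})_i^2 / \Sigma_i^2$, then bound below using $1/\Sigma_i^2 \geq 1/\Sigma_{\max}^2$ together with $\|V^T \mathbf{1}\|_2^2 = d$ (by orthogonality of $V$); this yields the $\sigma^2 d / \Sigma_{\max}^2$ term.

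The key step is bounding the approximation error. Let $b \in \mathbb{R}^d$ be the vector with $b_i = s_i^T D s_i$. Using the SVD, compute
\begin{equation*}
\sum_{i=1}^d b_i = \operatorname{tr}(S^T D S) = \operatorname{tr}(V \Sigma U^T D U \Sigma V^T) = \operatorname{tr}(\Sigma U^T D U \Sigma) = a_{D,\sigma}(U,\Sigma),
\end{equation*}
which I denote by $a$. Then, as in the previous paragraph, $\|S^{-T} b\|_2^2 = \sum_i (V^T b)_i^2 / \Sigma_i^2 \geq \|V^T b\|_2^2 / \Sigma_{\max}^2 = \|b\|_2^2 / \Sigma_{\max}^2$. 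Finally, Cauchy--Schwarz gives $\|b\|_2^2 \geq (\sum_i b_i)^2 / d = a^2/d$, yielding $\textbf{AE} \geq a^2 / (4 d \Sigma_{\max}^2)$.

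Summing the three lower bounds produces $l_{D,\sigma,h}(U,\Sigma,V) \geq \underline{l}_{D,\sigma,h}(U,\Sigma)$ uniformly in $V$, which gives the claim after taking the infimum over $V$. I do not anticipate a serious obstacle: each bound is a one-line application of orthogonal invariance, monotonicity of $1/x^2$, or Cauchy--Schwarz; the only point requiring care is keeping track of which pieces depend on $V$ and verifying that the feasibility constraint transfers correctly from $S$ to the pair $(U,\Sigma)$.
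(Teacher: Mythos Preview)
Your proposal is correct and follows essentially the same approach as the paper: bound each of the three terms of $l_{D,\sigma,h}$ separately using $\|S^{-T}v\|_2^2 \geq \|v\|_2^2/\Sigma_{\max}^2$, orthogonal invariance for the Frobenius term, and the trace identity $\sum_i b_i = \operatorname{tr}(\Sigma U^T D U \Sigma)$. The only minor difference is that for the inequality $\|b\|_2^2 \geq (\sum_i b_i)^2/d$ you invoke Cauchy--Schwarz directly, whereas the paper phrases the same fact as a constrained minimum-norm problem solved via Lagrange multipliers (Lemma~\ref{lma:min_norm_sum}); your route is slightly more direct but otherwise equivalent.
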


\begin{proposition}\label{prop:Uopt}
The global minimizers $U'$ and $\Sigma'$ of $\underline{l}_{D, \sigma, h}(U, \Sigma)$ are given by $U'$ being the identity matrix and $\Sigma'$ being the solution to $\min_{S} \underline{l}_{D, \sigma, h}(I, S)$. The minimization problem for $S'$ is equivalent to the one given in Theorem \ref{thm:SVD_Opt}.
\end{proposition}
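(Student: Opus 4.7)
The objective decomposes as
\[
\underline{l}(U,\Sigma) \;=\; \frac{a(U,\Sigma)^2}{4d\,\Sigma_{\max}^2} + g(\Sigma), \qquad g(\Sigma) := \sigma^2\sum_{i=1}^d \Sigma_i^{-2} + \sigma^2 d\,\Sigma_{\max}^{-2},
\]
where $g$ does not depend on $U$ and is symmetric in the entries of $\Sigma$. At $U=I$ one has $a(I,\Sigma) = \text{tr}(\Sigma^2 D) = \sum_i D_i\Sigma_i^2$, so $\underline{l}(I,\Sigma)$ literally coincides with the objective of Theorem~\ref{thm:SVD_Opt}; this settles the equivalence of the two optimization problems.

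For the optimality of $U'=I$, the plan has two steps. \emph{Step 1:} For $\Sigma$ with $a_{\min}(\Sigma) \ge 0$, I would apply Schur--Horn together with the rearrangement inequality. Setting $W := U\Sigma^2 U^T \succeq 0$ (whose spectrum is $(\Sigma_1^2,\ldots,\Sigma_d^2)$), the cyclic trace identity gives $a(U,\Sigma) = \text{tr}(WD) = \sum_i D_i W_{ii}$ since $D$ is diagonal. By the Schur--Horn theorem, $(W_{11},\ldots,W_{dd})$ lies in the convex hull of permutations of $(\Sigma_1^2,\ldots,\Sigma_d^2)$, and combined with $D$ being in increasing order (Assumption~\ref{as:D}), the rearrangement inequality yields
\[
a_{\min}(\Sigma) \;:=\; \min_\pi \sum_i D_i \Sigma_{\pi(i)}^2 \;\le\; a(U,\Sigma) \;\le\; \max_\pi \sum_i D_i \Sigma_{\pi(i)}^2 \;=:\; a_{\max}(\Sigma),
\]
with $a_{\min}(\Sigma)$ attained at $U=I$ applied to the decreasing reordering $\Sigma^{\downarrow}$ of $\Sigma$. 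Since $\Sigma^{\downarrow}$ preserves both $g$ and $\Sigma_{\max}$, and since $0 \le a_{\min}(\Sigma) \le a(U,\Sigma)$ implies $a(I,\Sigma^{\downarrow})^2 \le a(U,\Sigma)^2$, one obtains $\underline{l}(I,\Sigma^{\downarrow}) \le \underline{l}(U,\Sigma)$.

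\emph{Step 2:} Any joint minimizer $(U^\star,\Sigma^\star)$ of $\underline{l}$ must satisfy $a_{\min}(\Sigma^\star) \ge 0$, so that Step 1 applies to it. To see this, suppose $a_{\min}(\Sigma^\star) < 0$; using $\text{tr}(D) > 0$ (Assumption~\ref{as:D}), which forces $a_{\max}(\Sigma^\star) > 0$, one has $\min_U a(U,\Sigma^\star)^2 = 0$ and hence $\min_U \underline{l}(U,\Sigma^\star) = g(\Sigma^\star)$. Strict monotonicity of $g$ in each $\Sigma_i$ (and in $\Sigma_{\max}$) means that slightly increasing any component $\Sigma^\star_j$ that lies strictly below $h$ strictly decreases $g$; by continuity of $a_{\min}$, $a_{\min}$ remains negative under a sufficiently small such perturbation, so $\min_U \underline{l}$ strictly decreases, contradicting optimality. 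The only remaining case, $\Sigma^\star = hI$, gives $a_{\min} = h^2\,\text{tr}(D) > 0$, contradicting $a_{\min}(\Sigma^\star) < 0$. Applying Step 1 to any joint minimizer then yields $\underline{l}(I,\Sigma^{\star\downarrow}) \le \underline{l}(U^\star,\Sigma^\star)$, so $(I,\Sigma^{\star\downarrow})$ is itself a global minimizer, and the minimization problem for $\Sigma^{\star\downarrow}$ is exactly the one of Theorem~\ref{thm:SVD_Opt}.

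The main obstacle is Step 2. The mixed-sign regime $a_{\min}(\Sigma) < 0 < a_{\max}(\Sigma)$ is precisely where a non-identity $U$ can drive $a(U,\Sigma)$ to zero while every reordering $\Sigma^{\downarrow}$ (or $\Sigma^{\uparrow}$) leaves $a(I,\cdot)^2 = a_{\min}^2 > 0$, so the rearrangement bound alone does not suffice pointwise. Closing this gap requires the boundary-analysis argument above, leveraging the strict monotonicity of $g$ to force the joint minimizer out of the mixed-sign regime into $\{a_{\min}(\Sigma) \ge 0\}$, on which Step 1 is applicable.
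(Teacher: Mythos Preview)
Your argument is correct and shares the paper's two-step architecture: first rule out $a_{\min}(\Sigma^\star)<0$ at any joint optimum (the paper's Lemma~\ref{lma:a_sign}), then conclude that the identity applied to a suitably sorted $\Sigma$ minimizes $a^2$ and hence $\underline l$. The technical tools differ. For the $U$-minimization you invoke Schur--Horn and the rearrangement inequality, whereas the paper sets up a Lagrangian over the orthogonal group (Lemma~\ref{lma:U_ext}) and argues that the commutation relation $[U^TDU,\Sigma^2]=0$ forces $U$ to be a permutation; your route is shorter and avoids first-order analysis entirely. For Step~2, your perturbation argument (increase any $\Sigma_j<h$ so $g$ strictly drops while $a_{\min}<0<a_{\max}$ persists by continuity, hence $\min_U\underline l$ drops) is more compact than the paper's explicit interpolating path $\Sigma(\alpha)$ with an intermediate-value-theorem endpoint computation; both exploit the same leverage, namely strict monotonicity of the noise term. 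One small point worth making explicit in your write-up: the claim $\min_U a(U,\Sigma^\star)^2=0$ relies either on the Horn direction of Schur--Horn (diagonals fill the whole permutation polytope) or on connectedness of $SO(d)$ together with the sign-invariance $a(U\operatorname{diag}(\pm 1),\Sigma)=a(U,\Sigma)$, since $O(d)$ itself is disconnected. Also, your claim $a_{\max}(\Sigma^\star)>0$ deserves one line of justification: averaging $\sum_i D_i\Sigma_{\pi(i)}^2$ over all permutations gives $\tfrac{1}{d}\operatorname{tr}(D)\operatorname{tr}(\Sigma^2)>0$, so the maximum is positive.
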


\begin{proposition}\label{prop:optV}
    Let $U'$ and $\Sigma'$ be the minimizers of $\underline{l}_{D, \sigma, h}(U, \Sigma)$. Letting $V'$ be the Hadamard matrix $M_{k}$, where $k = \argmax_{i} \Sigma_i$, we have that $l_{D, \sigma, h}(U', \Sigma') = l_{D, \sigma, h}(U', \Sigma', V')$.
\end{proposition}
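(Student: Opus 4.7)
The plan is to directly verify that substituting $S^* = U' \Sigma' (V')^T = \Sigma' M_k^T$ (using $U' = I$ from Proposition~\ref{prop:Uopt}) into the original objective $l_{D, \sigma, h}$ reproduces, term-by-term, the lower bound $\underline{l}_{D, \sigma, h}(U', \Sigma')$. Since $\Sigma'$ already satisfies $hI \succeq \Sigma' \succ 0$, the matrix $S^*$ is feasible for $l$, so only the equality of the three summands needs to be checked.

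First I would dispatch the Frobenius noise term: since $M_k$ is orthogonal, $\Vert (S^*)^{-1} \Vert_F^2$ collapses to $\text{tr}((\Sigma')^{-2}) = \sum_i 1/(\Sigma'_i)^2$, which already matches the second term of $\underline{l}$. The central technical identity I would establish next is $M_k^T \mathbf{1} = \sqrt{d}\, e_k$, which follows from the two defining properties of $M_k$: its $k$th column has all entries equal to $1/\sqrt{d}$, giving $(M_k^T \mathbf{1})_k = \sqrt{d}$; and for $i \neq k$, orthogonality with this all-positive $k$th column forces $\sum_j (M_k)_{ji} = 0$. Applied to $\Vert (S^*)^{-T} \mathbf{1} \Vert_2^2 = \Vert (\Sigma')^{-1} M_k^T \mathbf{1} \Vert_2^2$, this identity immediately yields $d/(\Sigma'_{\max})^2$, matching the third term of $\underline{l}$, where the choice $k = \argmax_i \Sigma'_i$ is precisely what makes $\Sigma'_k = \Sigma'_{\max}$ appear in the denominator.

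The final term, and the only one with any real computation, is the approximation error $\tfrac{1}{4}\Vert (S^*)^{-T} w \Vert_2^2$ with $w_j = s_j^T D s_j$. With $U' = I$ the $j$th column of $S^*$ is $s_j = \Sigma' M_k^T e_j$, so $(s_j)_i = \Sigma'_i (M_k)_{ji}$, and the Hadamard property $(M_k)_{ji}^2 = 1/d$ collapses the quadratic form to the $j$-independent value $w_j = \tfrac{1}{d} \sum_i D_i (\Sigma'_i)^2 = a/d$ with $a = a_{D,\sigma}(I, \Sigma')$. Thus $w = (a/d)\mathbf{1}$, and reusing $M_k^T \mathbf{1} = \sqrt{d}\, e_k$ gives $\Vert (S^*)^{-T} w \Vert_2^2 = a^2/(d (\Sigma'_{\max})^2)$, matching the first term of $\underline{l}$ after applying the $1/4$ prefactor.

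There is no genuine obstacle: the proposition is really a tightness statement for the bound proved in Proposition~\ref{prop:obj_lower_bound}, and the only subtlety is recognizing that the Hadamard matrix $M_k$ is engineered to make two compatibilities happen at once. Its uniform squared entries force the curvatures $s_j^T D s_j$ to be constant across $j$ so that $w$ is proportional to $\mathbf{1}$, while its all-positive $k$th column makes $M_k^T \mathbf{1}$ collapse to a single coordinate, thereby concentrating the resulting noise and approximation contributions onto the single value $1/(\Sigma'_{\max})^2$.
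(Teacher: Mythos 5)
Your proof is correct and follows essentially the same route as the paper: a term-by-term verification that the Hadamard choice of $V'$ makes the lower bound $\underline{l}_{D,\sigma,h}(U',\Sigma')$ tight, using the uniform squared entries $(M_k)_{ij}^2 = 1/d$ to flatten the curvature vector $[s_j^T D s_j]_j$ to a multiple of $\mathbf{1}$, and the all-positive $k$th column to identify $\mathbf{1}$ with the singular direction of $\Sigma'_{\max}$. The only cosmetic difference is that you derive the identity $M_k^T \mathbf{1} = \sqrt{d}\, e_k$ explicitly from column orthogonality, whereas the paper reaches the same conclusion by invoking the tightness condition of its lower-bound lemma (that $\mathbf{1}$ is a right singular vector associated with $\Sigma_{\max}$).
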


\begin{proof}[Proof of Theorem \ref{thm:SVD_Opt}]
Notice that $U', \Sigma'$ given by Proposition \ref{prop:Uopt} and $V'$ given by Proposition \ref{prop:optV} have the same form as $U^*, \Sigma^*$ and $V^*$ in Theorem \ref{thm:SVD_Opt}. Hence, it remains to show that they are in fact the global minimizers of $l_{D, \sigma, h}$. Consider the following chain of inequalities. 
\begin{align*}
    l_{D, \sigma, h}(U', \Sigma', V') &\geq l_{D, \sigma, h}(U^*, \Sigma^*, V^*) \\
    &= \min_{U, \Sigma} \min_{V} l_{D, \sigma, h}(U, \Sigma, V) \\
    &\geq \min_{U, \Sigma} \underline{l}_{D, \sigma, h}(U, \Sigma) \quad \text{by Proposition \ref{prop:obj_lower_bound}}\\
    &= \underline{l}_{D, \sigma, h}(U', \Sigma').
\end{align*}
Due to Proposition \ref{prop:optV} we know $l_{D, \sigma, h}(U', \Sigma', V') = \underline{l}_{D, \sigma, h}(U', \Sigma')$. Hence the above chain of inequalities becomes a chain of equalities and specifically $l_{D, \sigma, h}(U', \Sigma', V') = l_{D, \sigma, h}(U^*, \Sigma^*, V^*)$. Hence $U', \Sigma'$ and $V'$ are also the global minimizers of $l_{D, \sigma, h}$. Rotating the result by $R$ completes the proof and provides a minimizer of $l_{H, \sigma, h}$.
\end{proof}

\subsection{Proof of Proposition \ref{prop:sig_alg}}
We now prove Proposition \ref{prop:sig_alg} and Algorithm \ref{alg:get_sigma_star} which computes a global minimizer $\Sigma^*$. To keep notation tight, we let $\lambda \in \mathbb{R}^d$ represent the diagonal of the square of $\Sigma$, i.e. $\lambda = diag(\Sigma^2)$. 

To develop an efficient method to solve for a global minimum we note the following useful properties. 
\begin{proposition}\label{prop:sig_dec}
    $\lambda^*$ is in decreasing order, i.e. $\lambda^*_i \leq \lambda^*_j$ whenever $i \leq j$. Furthermore, $\lambda^*_i = h^2$ for all $1 \leq i \leq d$ such that $D_i \leq 0$.  
\end{proposition}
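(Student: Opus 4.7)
The plan is to apply first-order (KKT) conditions to the reduced scalar problem in $\lambda := \mathrm{diag}(\Sigma^2) \in (0, h^2]^d$. Writing the objective as $g(\lambda) = A(\lambda)^2/(4d\,\lambda_{\max}) + \sigma^2 \sum_i 1/\lambda_i + \sigma^2 d/\lambda_{\max}$ with $A(\lambda) := \sum_i D_i \lambda_i$, the only permutation-sensitive piece is $A^2/\lambda_{\max}$. For any interior index $i$ (neither at the upper bound $h^2$ nor realising the maximum), stationarity of $g$ in $\lambda_i$ yields
\[
\frac{A\,D_i}{2d\,\lambda_{\max}} \;=\; \frac{\sigma^2}{\lambda_i^2},
\]
i.e.\ $\lambda_i^2 D_i = C$ is a common constant across all such indices. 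The entire proof is organised around this identity.

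For the monotonicity claim, I would first note that among interior indices with $D_i > 0$ the identity gives $\lambda_i = \sqrt{C/D_i}$, a strictly decreasing function of $D_i$. Since $D$ is sorted in increasing order by Assumption \ref{as:D}, this yields the claimed ordering on the interior block. The remaining indices lie on one of two boundary strata --- $\lambda_i^* = h^2$ or $\lambda_i^*$ realises $\lambda_{\max}$ --- and the second part of the proposition (handled below) will pin the $\lambda_i^* = h^2$ indices to the small-$D$ end of the ordering, which is the end of largest $\lambda^*$. Patching these pieces together yields the stated order on the whole index set.

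For the claim $\lambda_i^* = h^2$ whenever $D_i \leq 0$, the key step is a sign-consistency argument on $C$. If $D_i < 0$ and $\lambda_i^*$ were interior, the identity $\lambda_i^2 D_i = C$ would force $C \leq 0$; but by Assumption \ref{as:D} (positive trace) there exists $j$ with $D_j > 0$, and if $\lambda_j^*$ is also interior, the same identity forces $C > 0$ --- a contradiction. Hence $\lambda_i^*$ cannot be interior, so it lies at $h^2$ or realises $\lambda_{\max}$. The $\lambda_{\max}$ possibility is ruled out by a direct one-sided directional-derivative check: differentiating the $\lambda_{\max}$-dependent terms explicitly produces a strictly negative derivative in the direction of increasing $\lambda_i$, pushing $\lambda_i^*$ to the boundary $h^2$. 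The $D_i = 0$ case is immediate since then $\partial g/\partial \lambda_i = -\sigma^2/\lambda_i^2 < 0$ at any interior $\lambda_i$.

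The main anticipated obstacle is the non-smoothness of $g$ at coordinates realising $\lambda_{\max}$: the clean KKT identity above holds only away from the argmax, so a careful one-sided directional analysis is needed to extend the conclusion to those coordinates and to avoid a circular dependency between the monotonicity claim and the boundary claim. The positive-trace assumption is used critically in two places --- first to produce a $D_j > 0$ for the sign-consistency step, and second to verify that the argmax of $\lambda^*$ sits at a small-$D$ index as required by monotonicity.
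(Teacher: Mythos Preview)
Your KKT-based approach is genuinely different from the paper's, which proceeds by direct improvement. For the $D_i\le 0$ claim the paper supposes some such $\lambda_i^*<h^2$ and constructs an explicit feasible $\lambda(\alpha)$ that lifts all non-positive-$D$ coordinates to $h^2$ (and possibly some positive-$D$ ones as well); the noise term strictly drops, and an intermediate-value argument together with Lemma~\ref{lma:a_sign} shows the approximation term cannot increase. For monotonicity among the positive-$D$ indices the paper uses a two-point exchange: given an out-of-order pair $\lambda_i<\lambda_j$ with $i<j$, replace both by the $D$-weighted average $s=(D_i\lambda_i+D_j\lambda_j)/(D_i+D_j)$, which preserves $A$ while strictly reducing $1/\lambda_i+1/\lambda_j$. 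Your stationarity identity $\lambda_i^2 D_i=C$ anticipates the paper's later Proposition~\ref{prop:FONC} and dispatches the interior ordering cleanly; the paper's route, by contrast, avoids the boundary-strata case analysis that your approach still has to carry out.

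Your sign-consistency step has a real gap. You derive a contradiction only conditionally---``if $\lambda_j^*$ is also interior'' for some $D_j>0$---but never establish that such an interior positive-$D$ index must exist. If every positive-$D$ index lies at $h^2$ or realises $\lambda_{\max}$ while some negative-$D$ index is interior, your argument does not close. The direct fix is to read off from the stationarity equation that $C=2d\sigma^2\lambda_{\max}/A$, so $\mathrm{sign}(C)=\mathrm{sign}(A)$; then invoke Lemma~\ref{lma:a_sign} (already available at this point in the paper, and itself proved by a constructive argument of the paper's type) to get $A\ge 0$ at the optimum, whence $C>0$ and interiority for any $D_i\le 0$ is immediately impossible without reference to a second index. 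A secondary issue: your monotonicity patching asserts that the second part of the proposition ``pins the $\lambda_i^*=h^2$ indices to the small-$D$ end'', but the second part only gives $D_i\le 0\Rightarrow\lambda_i^*=h^2$, not the converse---positive-$D$ indices can also sit at $h^2$ (this is precisely what Proposition~\ref{prop:active_sets_solution_exists} later accommodates). You need a separate KKT-multiplier inequality (roughly $\mu_j\ge 0\Leftrightarrow D_j\le C/h^4$) to show those also occupy the low-$D$ end.
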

The proof is in Appendix \ref{app:sig_dec}. 

\begin{remark}
Due to $\lambda^*$ being in decreasing order, we know that $\lambda_{max}^* = \lambda_1^*$ which will allow us to avoid any issues of non-differentiability of the maximum operator.
\end{remark}

We now proceed by presenting a closed-form solution for $\lambda^*$ with known run-time.

\subsubsection{Finding all Candidate Solutions}\label{sec:cand_sol}
We solve the optimization problem by constructing a set of candidate solutions. The candidate solutions compromise the extrema of a set of unconstrained problems which we can solve analytically. We now formalize this. All proofs for this section are in Appendix \ref{app:active_sets}.

Let $A \in \mathcal{P}(\{1, \dots, d\})$ be an \textbf{active set}. $A$ will contain all the indices of $\lambda$ which will be set to $h^2$. We then write the 
\begin{align*}
    \lambda^{(A)}_i &=
    \begin{cases}
        h^2 & \text{if $i \in A$,} \\
        \lambda_i & \text{otherwise.}
    \end{cases}
\end{align*}
Using Proposition \ref{prop:sig_dec} to replace $\lambda_{max}$ with $\lambda_1$ we let the unconstrained function $l_{D, \sigma, h, A} : \mathbb{R}_+^d \rightarrow \mathbb{R}$ be
\begin{align*}
    l_{D, \sigma, h, A}(\lambda) := 
        \frac{(\sum_{i = 1}^d D_i \lambda^{(A)}_i)^2}{4 d \lambda^{(A)}_{1}} + \sigma^2 \sum_{i =0}^d \frac{1}{\lambda^{(A)}_i} + \sigma^2 \frac{1}{\lambda^{(A)}_{1}} d.
\end{align*}
We write $l_A(\lambda)$ instead of $l_{D, \sigma, h, A}(\lambda)$ when clear from context.

Since the function $l_A$ is differentiable on its domain, the stationary points are well defined. We denote the set of stationary points associated with the active set $A$ by $\lambda^*_{A} = \{\lambda^{(A)} : \nabla l_A(\lambda) = 0\}$. 

We now show that given the above reformulation there exists an active set $A$ such that the global solution $\lambda^*$ is contained in $\lambda_A^*$. Additionally, instead of computing solutions for $2^d$ possible active sets, we need to check at most $d$ active sets due to Proposition \ref{prop:sig_dec}. 
\begin{proposition}\label{prop:active_sets_solution_exists}
Let $K = \{i \vert D_i \leq 0\}$ and $A_J = \{1, \dots, J\}$. Then there exists $J \in \{\vert K \vert, \dots, d\}$ such that $\lambda^* \in \lambda^*_{A_J}$.
\end{proposition}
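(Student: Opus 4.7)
The plan is a standard active-set argument: use Proposition~\ref{prop:sig_dec} to read off the active box constraints at $\lambda^*$ as a prefix $A_J = \{1,\dots,J\}$ containing $K$, and then verify by first-order necessary conditions at the constrained minimizer that the remaining coordinates of $\lambda^*$ satisfy the stationarity equations defining $\lambda^*_{A_J}$.

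First I would locate the active set. Because $D$ is sorted in increasing order by Assumption~\ref{as:D}, the set $K = \{i : D_i \le 0\}$ coincides with the prefix $\{1,\dots,|K|\}$. Proposition~\ref{prop:sig_dec} gives $\lambda^*_i = h^2$ on $K$ together with the monotonicity $\lambda^*_1 \ge \cdots \ge \lambda^*_d$, while the box constraint from Definition~\ref{defn:optimization_problem} forces $\lambda^*_i \le h^2$ throughout. These three facts together imply that $\{i : \lambda^*_i = h^2\}$ is itself a prefix of $\{1,\dots,d\}$ containing $K$; let $J$ be its largest element, with the convention $J = 0$ if this set is empty. Then $|K| \le J \le d$, and the active-set convention applied to $\lambda^*$ produces $\lambda^{(A_J)} = \lambda^*$ coordinate-wise.

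Next I would verify that $\nabla l_{A_J}(\lambda^*) = 0$. For $i \in A_J$ the function $l_{A_J}$ has no dependence on $\lambda_i$ by construction, so the partial derivative vanishes trivially. For $i > J$ the definition of $J$ gives $0 < \lambda^*_i < h^2$, placing $\lambda^*_i$ strictly interior to its box; and when $J \ge 1$ the equality $\lambda^{(A_J)}_1 = h^2$ strictly dominates $\lambda_i$ throughout a small neighborhood of $\lambda^*$ in the $i$-th coordinate, so $\lambda_{\max}$ coincides there with $\lambda^{(A_J)}_1$ and $l_{A_J}$ agrees locally with $l_{D,\sigma,h}$. Because $\lambda^*$ is a constrained minimizer, a first-order necessary condition in these free directions forces $\partial_{\lambda_i} l_{A_J}(\lambda^*) = 0$, yielding $\lambda^* \in \lambda^*_{A_J}$.

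The main obstacle is the nondifferentiability of $\lambda_{\max}$ in the original objective, which is precisely what the active-set reformulation is designed to absorb. Proposition~\ref{prop:sig_dec} is the tool that makes this rigorous: monotonicity of $\lambda^*$ identifies $\lambda^*_1$ with $\lambda^*_{\max}$, and the strict inequality $\lambda^*_i < h^2 = \lambda^*_1$ for $i > J \ge 1$ preserves this identification under perturbation of the $i$-th coordinate. The corner case $J = 0$, when no box constraint is active, is handled by the same monotonicity applied at the fully interior point, and any ties at the top of $\lambda^*$ in this regime can be dissolved by a limiting argument within the strictly-ordered regime where the smooth analysis is unambiguous.
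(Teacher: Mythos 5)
Your proposal is correct and follows essentially the same route as the paper: Proposition~\ref{prop:sig_dec} forces the set of active box constraints at $\lambda^*$ to be a prefix $A_J\supseteq K$, and one then checks that $\lambda^*$ is a stationary point of the unconstrained surrogate $l_{A_J}$ (the paper packages this second step as Lemma~\ref{lma:active_existance}). One small caveat: your coordinate-wise first-order argument breaks down when an interior coordinate ties the maximum (e.g.\ $\lambda^*_1=\lambda^*_2<h^2$), since the local agreement of $l$ with $l_{A_J}$ is then only one-sided; the clean fix is the paper's global comparison $l_{A_J}(\lambda)\ge l(\lambda^{(A_J)})\ge l(\lambda^*)=l_{A_J}(\lambda^*)$, which makes $\lambda^*$ a minimizer of $l_{A_J}$ outright and renders the ``limiting argument'' unnecessary.
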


We now need to show that finding $\lambda^*_A$ can be done efficiently. For this we first present the following. 
\begin{proposition}\label{prop:FONC}
    Let $K = \{i \vert D_i \leq 0\}$ and $A$ be some active set such that $K \subseteq A$. Then the set $\lambda_A^*$ consists of all $\lambda$ which satisfy
    \begin{align*}
    \lambda_{1} &= 
    \begin{cases}
        h^2 & \text{if } 1 \in A, \\
        \frac{2d}{a D_{1}}\left(\frac{a^2}{4d} + \sigma^2 (d + 1) \right) & \text{otherwise;}\\
    \end{cases}\\
    \lambda_i &= 
    \begin{cases} 
        h^2 & \text{if } i \in A, \\
        \sigma \sqrt{\frac{2 d \lambda_{1}}{a D_{j}}} & \text{otherwise,} \\
    \end{cases} & \text{for } 2 \leq i \leq d, \\
    a &= \sum_{i \not \in A} D_i \lambda_i + h^2 \sum_{i \in A} D_i.
\end{align*}
\end{proposition}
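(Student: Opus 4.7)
The plan is a direct first-order analysis of $l_A$. First I would substitute the fixed values $\lambda_i^{(A)} = h^2$ for $i \in A$ into the expression for $l_A$, so that the only free variables are $\{\lambda_i\}_{i \notin A}$. With these substitutions, the numerator inside the first term becomes exactly $a = \sum_{i \notin A} D_i \lambda_i + h^2 \sum_{i \in A} D_i$, matching the statement. Since $K \subseteq A$, every free index $j \notin A$ satisfies $D_j > 0$, which is the key positivity fact needed when extracting square roots below.

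Next, for any free index $j \neq 1$, only the terms $a^2/(4d\lambda_1^{(A)})$ and $\sigma^2/\lambda_j$ in $l_A$ depend on $\lambda_j$, and since $\partial a/\partial \lambda_j = D_j$, the chain rule gives
$$\partial_{\lambda_j} l_A = \frac{a D_j}{2d\lambda_1^{(A)}} - \frac{\sigma^2}{\lambda_j^2}.$$
Setting this to zero and solving yields $\lambda_j = \sigma\sqrt{2d\lambda_1^{(A)}/(a D_j)}$, which agrees with the claimed formula since $\lambda_1^{(A)}$ equals $h^2$ when $1 \in A$ and equals $\lambda_1$ when $1 \notin A$, in both cases coinciding with the value $\lambda_1$ defined in the statement's first bullet.

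The only remaining equation is the first-order condition with respect to $\lambda_1$, and here the case $1 \in A$ is vacuous ($\lambda_1 = h^2$ is frozen and not differentiated). If $1 \notin A$, then $\lambda_1$ enters through $a$ in the numerator, as a denominator in both the first and third terms of $l_A$, and through $\sigma^2/\lambda_1$ in the sum. Applying the product and chain rules,
$$\partial_{\lambda_1} l_A = \frac{a D_1}{2d\lambda_1} - \frac{a^2}{4d\lambda_1^2} - \frac{\sigma^2(d+1)}{\lambda_1^2}.$$
Multiplying through by $\lambda_1^2$, equating to zero, and isolating $\lambda_1$ gives the stated closed form $\lambda_1 = \frac{2d}{aD_1}\bigl(\frac{a^2}{4d} + \sigma^2(d+1)\bigr)$.

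The main subtlety I expect is that these expressions are only meaningful when the resulting $\lambda_i$ are strictly positive, which reduces to showing $a > 0$ and $a D_1 > 0$ at any critical point. The latter follows from $D_1 > 0$ (which holds whenever $1 \notin A$, since $K \subseteq A$), and positivity of $a$ follows from Assumption \ref{as:D} combined with $K \subseteq A$: the negative-diagonal entries contribute only via the fixed term $h^2 \sum_{i \in A} D_i$ while at least one free coordinate has $D_j > 0$, and consistency of the system then forces $a > 0$. Apart from this bookkeeping, the proof is a mechanical computation of the gradient of $l_A$.
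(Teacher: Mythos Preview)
Your proposal is correct and takes essentially the same approach as the paper: both compute the partial derivatives of $l_A$ with respect to the free coordinates, set them to zero, and solve algebraically for the stated expressions. The only cosmetic difference is the justification of $a>0$ --- the paper cites Lemma~\ref{lma:a_sign} for $a\ge 0$ and then observes that $a=0$ makes the $i\ge 2$ stationarity equation unsatisfiable, whereas you extract $a>0$ directly from the consistency of the first-order system; both arguments are valid.
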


We now present the unique solution for $a$ for a given active set, which then allows us to immediately compute the unique candidate solution given by Proposition \ref{prop:FONC}.
\begin{proposition}\label{prop:solve_a}
Let $K = \{i \vert D_i \leq 0\}$ and $A_J = \{1, \dots, J\}$. Then the $a$ which satisfies the equations given in Proposition \ref{prop:FONC} is unique and is given by
\begin{itemize}
\item If $A_J$ is empty, then we have
    \begin{align*}
        a = \sqrt{2} \left(\frac{d \sigma^2}{D_{1}} \left(C \sqrt{8 D_{1} (d + 1) + C^2} + 2D_{1}(d + 1) + C^2 \right)\right)^{1/2}
    \end{align*}
    with $C = \sum_{i = 2}^d \sqrt{D_{i}}$;
\item Otherwise, 
    \begin{align*}
        a &= 
        \begin{cases}
            \frac{\sqrt[3]{\frac{2}{3}} c_2}{\sqrt[3]{\sqrt{3} \sqrt{27 c_1^2-4 c_2^3}+9 c_1}}+\frac{\sqrt[3]{\sqrt{3} \sqrt{27 c_1^2-4 c_2^3}+9 c_1}}{\sqrt[3]{18}} & \text{if } 27 c^2_1 - 4c_2^3 \geq 0, \\
            \cos(\theta/3) 2 \sqrt{\frac{c_2}{3}} & \text{otherwise,}
        \end{cases}
    \end{align*}
    where $c_1 = \sigma \sqrt{\frac{d h^2}{2}} \sum_{i = J + 1}^d \sqrt{D_{i}}$, $c_2 = \sum_{i=1}^J D_{i} h^2$ and $\theta = \text{arccos}(\frac{9 c_1}{\sqrt{12 c_2^3}})$.
\end{itemize}
\end{proposition}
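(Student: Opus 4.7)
The plan is to substitute the explicit formulas from Proposition \ref{prop:FONC} into the defining identity
\begin{equation*}
a \;=\; \sum_{i \notin A_J} D_i \lambda_i + h^2 \sum_{i \in A_J} D_i
\end{equation*}
and collapse it to a scalar polynomial equation in $a$. The two cases of the statement correspond to whether or not $\lambda_1$ is itself pinned to $h^2$, which controls how many $a$-dependencies remain in the sum.

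I would first tackle the non-trivial case $A_J \neq \emptyset$. Since $1 \in A_J$, we have $\lambda_1 = h^2$, and for $i > J$ the formula $\lambda_i = \sigma \sqrt{2 d h^2/(a D_i)}$ depends on $a$ only through $1/\sqrt{a}$. Direct substitution, using the definitions of $c_1$ and $c_2$, collapses the identity to $a = c_2 + 2 c_1 / \sqrt{a}$. Multiplying through by $\sqrt{a}$ and squaring (which is reversible because $c_1 \ge 0$ forces $a \ge c_2$) yields the cubic
\begin{equation*}
a^3 - 2 c_2 a^2 + c_2^2 a - 4 c_1^2 \;=\; 0.
\end{equation*}
The standard depressing substitution $a = v + 2 c_2/3$ converts this to $v^3 + P v + Q = 0$ with $P = -c_2^2/3$ and $Q = 2 c_2^3/27 - 4 c_1^2$. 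The sign of the discriminant of this depressed cubic is governed by the quantity that threshold-splits the statement, so in the single-real-root regime Cardano's formula produces the stated radical expression, while in the three-real-roots regime one applies the usual trigonometric parametrisation with angle $\theta$ as given and selects the $\cos(\theta/3)$ branch. In each regime the admissible root is the unique real solution satisfying $a \ge c_2$; the other roots are either complex, extraneous artefacts of squaring, or lie outside the feasible region.

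For the case $A_J = \emptyset$ an additional substitution is required because $\lambda_1 = a/(2 D_1) + 2 d \sigma^2 (d+1)/(a D_1)$ itself depends on $a$. Plugging this into $\lambda_i = \sigma \sqrt{2 d \lambda_1/(a D_i)}$ for $i \ge 2$, collecting terms through $C = \sum_{i \ge 2} \sqrt{D_i}$, and isolating the remaining square root leads to
\begin{equation*}
a^2 - 4 d \sigma^2 (d+1) \;=\; 2 \sigma C \sqrt{d/D_1}\,\sqrt{a^2 + 4 d \sigma^2 (d+1)}.
\end{equation*}
Setting $y = a^2$, $w = 4 d \sigma^2 (d+1)$, and $K = 2 \sigma C \sqrt{d/D_1}$ and squaring produces the quadratic $y^2 - (2 w + K^2) y + w^2 - K^2 w = 0$ with discriminant $K^2 (K^2 + 8 w) \ge 0$. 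The root $y = w + K\bigl(K + \sqrt{K^2 + 8 w}\bigr)/2$ is the only one satisfying $y \ge w$, which is precisely the sign condition needed to justify squaring; re-expanding $K$ and $w$ and taking the positive square root gives the claimed closed form.

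The hard part is the bookkeeping and root selection rather than any analytic subtlety: one must verify in each branch that exactly one real root lies in the admissible region ($a \ge c_2$ in case 2, $a^2 \ge w$ in case 1), and then simplify the resulting radicals to match the precise expressions in the statement. The substitution and polynomial reductions themselves are direct, and uniqueness then follows immediately from the sign constraints imposed by the non-negativity of $c_1$ and of $\sigma$.
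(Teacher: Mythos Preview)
Your treatment of the empty--active--set case matches the paper's: both routes reduce to the biquadratic $(y-w)^2 = K^2(y+w)$ in $y=a^2$, select the root with $y\ge w$, and unpack to the stated radical.

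The non-empty case has a real gap. The paper does \emph{not} square; it substitutes $x=\sqrt{a}$, turning $a^{3/2}=c_2\,a^{1/2}+(\text{const})$ into the already-depressed cubic $x^3-c_2x-c_1=0$, and the expression in the statement is \emph{literally} Cardano's formula for that cubic (uniqueness of the positive real root is then argued from Vieta's relations, since the product of the roots is $c_1>0$ while their sum is $0$). Your squaring step produces the different polynomial $a^3-2c_2a^2+c_2^2a-4c_1^2=0$, and your claim that its discriminant is controlled by the statement's threshold $27c_1^2-4c_2^3$ is false: depressing via $a=v+2c_2/3$ gives $P=-c_2^2/3$, $Q=2c_2^3/27-4c_1^2$, hence
\[
\frac{Q^2}{4}+\frac{P^3}{27}\;=\;\frac{4c_1^2}{27}\bigl(27c_1^2-c_2^3\bigr),
\]
so the regime switch occurs at $27c_1^2=c_2^3$, not $27c_1^2=4c_2^3$. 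Cardano applied to your cubic therefore outputs an expression different from the one in the proposition, and you supply no algebra connecting the two. The missing idea is precisely the $x=\sqrt{a}$ substitution: it bypasses squaring, removes the depressing step, and lands directly on the depressed cubic whose Cardano solution is the formula you are asked to derive.
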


\begin{algorithm}[h]
\caption{Get$\Sigma^*$}\label{alg:get_sigma_star}
\begin{algorithmic}[1]
\Procedure{Get$\Sigma^*$}{$D, \sigma, h$}
\State $d \gets \text{Dimension of } D$
\If{$\sum_{i=1}^d D_i = 0$}
    \State \Return $h$
\EndIf
\State $J \gets \vert \{i | D_i \leq 0, 1 \leq i \leq d\} \vert$ 
\State $c_1, c_2 \gets \sum_{i = J + 1}^{d} \sqrt{D_i}, \sum_{i = 1}^J D_i$
\While{True}
    \State $\lambda_{J + 1}, a \gets \text{Get}\lambda_{J + 1}(J, D, \sigma, h, c_1, c_2)$
    \State $c_1, c_2 \gets c_1 - \sqrt{D_{J + 1}}, c_2 + D_{J + 1}$ 
    \If{$\lambda_{J + 1} \leq h^2$ and $J < d$}
        \State break
    \EndIf
    \State $J \gets J + 1$
\EndWhile 
\State $\lambda_i \gets h^2$ \Comment{For $1 \leq i \leq J$.}
\State $\lambda_j \gets \sigma \sqrt{\frac{2 d h^2}{a D_j}}$ \Comment{For $J + 1 \leq j \leq d$.}
\State \Return $\sqrt{\lambda}$ 
\EndProcedure
\end{algorithmic}
\end{algorithm}

\begin{algorithm}[h]
\caption{Get $\lambda_{J+1}$}\label{alg:get_lambda_J}
\begin{algorithmic}[1]
\Procedure{Get$\lambda_{J + 1}$}{$J, D, \sigma, h, c_1, c_2$}
\State $d \gets \text{Dimension of } D$
\If{$J = 0$}
    \State $J \gets c_1 - \sqrt{D_1}$
    \State $a \gets \sqrt{2} (\frac{d \sigma^2}{D_{1}} (J \sqrt{8 D_{1} (d + 1) + J^2} + 2D_{1}(d + 1) + J^2 ))^{1/2}$
    \State $\lambda_1 \gets \frac{2d}{a D_{1}}\left(\frac{a^2}{4d} + \sigma^2 (d + 1) \right)$
\Else
    \State $c_1, c_2 \gets \sigma \sqrt{\frac{d h^2}{2}}c_1, h^2 c_2$
    \State $disc \gets 81 c_1^2 - 12 c_2^3$
    \If{$disc < 0$} 
        \State $\theta \gets \cos^{-1}(9 \frac{c_1}{\sqrt{12 c_2^3}})$
        \State $a \gets \frac{4}{3} \cos(\theta/3)^2 c_2 $ 
    \Else 
        \State $a \gets \frac{\sqrt[3]{\frac{2}{3}} c_2}{\sqrt[3]{\sqrt{disc} + 9 c_1}} + \frac{\sqrt[3]{\sqrt{disc} + 9 c_1}}{\sqrt[3]{18}}$
    \EndIf
    \State $\lambda_{J + 1} \gets \sigma \sqrt{\frac{2 d h^2}{a D_{J + 1}}}$
\EndIf
\State \Return $\lambda_{J + 1}, a$
\EndProcedure
\end{algorithmic}
\end{algorithm}

\subsubsection{Constructing $\lambda^*$ in Linear Runtime} 
While Propositions \ref{prop:FONC} and \ref{prop:solve_a} tell us how to construct solutions for an active set, it remains to select the global minimum of all possible solutions. The naive implementation of finding $\lambda^*$ has runtime $\mathcal{O}(d^2)$, since there are $d$ active sets to consider by Proposition \ref{prop:active_sets_solution_exists} and constructing $\lambda_A^*$ for each $A$ is also of linear runtime. We can improve the runtime to be linear. While this will not improve the theoretical runtime of the method, in practice we have seen it be helpful. 

To construct $\lambda^*$ in linear time, we observe the following proposition. The Algorithm \ref{alg:get_sigma_star} then uses the result as given by Algorithm \ref{alg:get_lambda_J} to compute $\lambda^*$ in linear time. Since by Proposition \ref{prop:FONC} and \ref{prop:solve_a} the candidate solution for an active set is unique, we let $\lambda^*_{A}$ represent the solution rather than the solution set. 

\begin{proposition}\label{prop:linear_runtime}
Let $K = \{i \vert D_i \leq 0\}$ and $J$ be the smallest integer in the set $\{\vert K \vert, \dots, d\}$ such that $\lambda_{A_J}^*$ satisfies all constraints, i.e. $h^2 \succeq \lambda_{A_J}^*$. Then it follows that $l(\lambda^*) = l(\lambda_{A_J}^*)$. Additionally, if $J \geq 1$ the comparison $h^2 \succeq \lambda_{A_J}^*$ can be done in constant time.
\end{proposition}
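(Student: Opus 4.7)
The plan is to prove the two claims of the proposition separately: the equality of objective values $l(\lambda^*) = l(\lambda^*_{A_J})$, and the constant-time feasibility test when $J \geq 1$.

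For the equality, I would first argue that for each $J' \in \{|K|, \dots, d\}$ the candidate $\lambda^*_{A_{J'}}$ is in fact the unique global minimizer of $l_{A_{J'}}$ over the open positive orthant $\mathbb{R}_+^{d-J'}$. The key observation is coercivity: $l_{A_{J'}} \to \infty$ both as any free coordinate $\lambda_i \to 0^+$ (through the $\sigma^2/\lambda_i$ term) and as any free $\lambda_i \to \infty$ (since $i > J' \geq |K|$ forces $D_i > 0$, making the squared approximation term diverge). Hence a global minimum exists in the open orthant and must be a stationary point; the uniqueness of the stationary point guaranteed by Propositions \ref{prop:FONC} and \ref{prop:solve_a} then identifies it as $\lambda^*_{A_{J'}}$.

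Given this, let $J$ be as in the statement. Since $\lambda^*_{A_J}$ lies in the box $(0, h^2]^{d-J}$, the preceding step promotes it to the global minimizer of $l$ over the slice $\hat\Omega_J := \{\lambda : \lambda_i = h^2 \text{ for } i \in A_J,\ 0 < \lambda_i \leq h^2 \text{ for } i \notin A_J\}$. By Proposition \ref{prop:active_sets_solution_exists} we may write $\lambda^* = \lambda^*_{A_{J''}}$ for some $J''$; feasibility of $\lambda^*$ implies $J''$ also satisfies the feasibility condition, so minimality of $J$ gives $J'' \geq J$. The inclusion $A_{J''} \supseteq A_J$ then yields $\lambda^* \in \hat\Omega_{J''} \subseteq \hat\Omega_J$, so $l(\lambda^*) \geq l(\lambda^*_{A_J})$. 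The reverse inequality is automatic, since $\lambda^*_{A_J}$ is itself a feasible competitor, and the two inequalities combine into the desired equality.

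For the constant-time check, the plan is to exploit monotonicity: when $J \geq 1$, Proposition \ref{prop:FONC} fixes $\lambda_1 = h^2$ (trivially feasible), while for $i > J$ the formula $\lambda_i = \sigma\sqrt{2 d h^2/(a D_i)}$ is strictly decreasing in $D_i$. Because $D$ is ordered non-decreasingly by Assumption \ref{as:D}, the maximum over free coordinates is attained at $\lambda_{J+1}$, so the vector inequality $h^2 \succeq \lambda^*_{A_J}$ collapses to the scalar test $\lambda_{J+1} \leq h^2$. The main subtlety I anticipate is the coercivity step in the first part: one must verify that $l_{A_{J'}}$ really diverges along every approach to the boundary of $\mathbb{R}_+^{d-J'}$, including sequences where several free coordinates tend to $0$ or $\infty$ simultaneously, to cleanly upgrade \emph{unique stationary point} to \emph{unique global minimum}.
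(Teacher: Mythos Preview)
Your proposal is correct and follows essentially the same strategy as the paper. Both arguments hinge on the uniqueness of the stationary point of $l_{A_J}$ (Propositions~\ref{prop:FONC} and~\ref{prop:solve_a}) to identify $\lambda^*_{A_J}$ as the minimizer over the slice with active set $A_J$, and both reduce the feasibility check to the single coordinate $\lambda_{J+1}$ via the monotonicity of $\sigma\sqrt{2dh^2/(aD_i)}$ in $D_i$.

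The one noteworthy difference is that you make explicit the coercivity step upgrading \emph{unique stationary point} to \emph{unique global minimizer} of $l_{A_{J'}}$ on $\mathbb{R}_+^{d-J'}$, whereas the paper's contradiction argument (``implying that $\lambda^*_{A_J}$ was not the global minimum \dots\ this non-uniqueness is a contradiction'') tacitly assumes it. Your presentation is the more careful one on this point; the paper's version reads as a compressed form of the same reasoning. Structurally the paper phrases the first part as a contradiction while you proceed directly via the nested slices $\hat\Omega_{J''}\subseteq\hat\Omega_J$, but this is a cosmetic rather than a substantive difference.
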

The proof is in Appendix \ref{app:linear_runtime}.

\section{extended Curvature Aligned Simplex Gradient (eCASG) }\label{sec:subspace}
Theorem \ref{thm:SVD_Opt} only applies for dimensions which are powers of two. Hence, we introduce a heuristic method which subdivides the full space such that we can use CASG via Theorem \ref{thm:SVD_Opt}. First we describe how to partition the subspace and how that may lead to a useful decomposition of the objective function. We then describe a method by which to choose the partition such that we can exploit the results for spaces with dimensions of powers of two and which aligns with the intuition developed in Section \ref{sec:intuition}.

\paragraph{Partition Set-up} Let $\{B_i\}_{1 \leq i \leq k}$ be a partition of the standard basis $\{e_i\}_{1 \leq i \leq d}$ into $k$ sets, with each $B_i$ being a cell of the partition represented as a matrix. We let $B$ be the concatenation of the cells, $B = [B_1 \quad B_2 \quad \dots \quad B_k]$. Since $B$ is simply a reordering of the standard basis, the matrix is orthogonal. Using this partition, we enforce that each difference vector is spanned by one of the cells. We can now write $S = B S_B$, where the $i$th columns of $S_B$ are the coefficients of the difference vector $s_i$ in the $B_i$ basis. It follows that $S_B$ is a block-diagonal matrix, with the $i$th block representing the coefficients of the differences vectors spanned by cell $B_i$. It immediately follows that, if it exists, the matrix $S^{-1}_B$ is also a block diagonal matrix, with the $i$th block of $S_B^{-1}$ being equal to $S^{-1}_{B_i}$. 

\begin{proposition}\label{prop:eCASG}
    Let Assumptions \ref{as:exact_D_sig} and \ref{as:D} hold. Given a fixed partition $\{B_i\}_{1 \leq i \leq k}$ and a block-diagonal matrix $S_B$ with $S_{B_i}$ being the $i$th block providing the coordinates of the difference vectors in space spanned by $B_i$, we have $l_{D, \sigma, h}(B S_B) = \sum_{i = 1}^k l_{D_{B_i}, \sigma, h}(S_{B_i})$ with $D_{B_i} = B_i D B_i^T$. 
\end{proposition}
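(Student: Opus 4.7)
The plan is to exploit the orthogonality of $B$ and the block-diagonal structure of $S_B$ to decompose $l_{D,\sigma,h}(BS_B)$ term-by-term into the claimed sum. First I would note that since $B$ is a concatenation of disjoint subsets of the standard basis, $B$ is a permutation matrix and in particular orthogonal, so $(BS_B)^{-T} = B S_B^{-T}$; moreover the block-diagonal structure of $S_B$ transfers to $S_B^{-1}$ and $S_B^{-T}$, with $i$th blocks $S_{B_i}^{-1}$ and $S_{B_i}^{-T}$ respectively.

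Next I would handle the two noise terms. Using orthogonality of $B$ and the cyclic property of trace, $\Vert (BS_B)^{-1} \Vert_F^2 = \Vert S_B^{-1} \Vert_F^2$, and because the Frobenius norm of a block-diagonal matrix is the sum of the Frobenius norms of its blocks, this equals $\sum_i \Vert S_{B_i}^{-1} \Vert_F^2$. Similarly $\Vert (BS_B)^{-T}\mathbf{1}\Vert_2^2 = \Vert S_B^{-T}\mathbf{1}\Vert_2^2$, and since $S_B^{-T}\mathbf{1}$ is the vertical concatenation of the vectors $S_{B_i}^{-T}\mathbf{1}$, the squared $L_2$ norm is $\sum_i \Vert S_{B_i}^{-T}\mathbf{1}\Vert_2^2$. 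Combined, the noise error of the full problem equals the sum of the noise errors of the subproblems.

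For the approximation error, the key observation is that if the $j$th column of $S_B$ lies in block $i$, then $s_j = BS_B e_j = B_i (S_{B_i})_{j'}$ for the corresponding local index $j'$, so the quadratic form becomes $s_j^T D s_j = (S_{B_i})_{j'}^T (B_i^T D B_i)(S_{B_i})_{j'} = (S_{B_i})_{j'}^T D_{B_i} (S_{B_i})_{j'}$. Hence the vector $v = [s_1^T D s_1, \dots, s_d^T D s_d]^T$ decomposes into blocks $v_i$ with $v_i$ being the analogous Hessian-quadratic-form vector for the $i$th subproblem under Hessian $D_{B_i}$. Applying $B S_B^{-T}$ to $v$ and using orthogonality of $B$ together with block-diagonality of $S_B^{-T}$ gives $\Vert (BS_B)^{-T} v \Vert_2^2 = \sum_i \Vert S_{B_i}^{-T} v_i \Vert_2^2$, which is precisely $4 \sum_i \textbf{AE}_i$.

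Finally I would verify that the constraints align. Since $B$ is orthogonal, $\Vert BS_B\Vert_2 = \Vert S_B\Vert_2 = \max_i \Vert S_{B_i}\Vert_2$, where the last equality uses the standard fact that the spectral norm of a block-diagonal matrix is the maximum of the block spectral norms; thus $h \geq \Vert BS_B\Vert_2$ iff $h \geq \Vert S_{B_i}\Vert_2$ for every $i$. Likewise, $BS_B \in GL_d$ iff each $S_{B_i} \in GL_{d_i}$, because the determinant factorizes over blocks and $B$ has unit determinant up to sign. Hence the feasibility indicator also splits as a product, and the claimed identity $l_{D,\sigma,h}(BS_B) = \sum_i l_{D_{B_i},\sigma,h}(S_{B_i})$ follows (with both sides being $\infty$ simultaneously when any block is infeasible). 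The only real obstacle is the bookkeeping step matching indices of the difference-vector columns to the correct block of $v$; once that correspondence is made explicit, every remaining step reduces to orthogonality of $B$ and elementary block-matrix algebra.
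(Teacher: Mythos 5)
Your proof is correct and follows essentially the same route as the paper, whose own proof simply asserts that the identity "follows immediately from the block structure of $S_B$ and orthogonality of $B$" (after disposing of the non-invertible case); you have supplied the details of exactly that argument, term by term, including the constraint bookkeeping. The only cosmetic discrepancy is that you write the restricted Hessian as $B_i^T D B_i$ while the statement writes $B_i D B_i^T$ — your version is the dimensionally consistent one given that $B_i$ collects basis vectors as columns, so this reflects a notational slip in the paper rather than a gap in your argument.
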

\begin{proof}
If $S_B$ is not invertible then at least one block will not be invertible and hence the two sides of the equation are trivially equal as they equal $\infty$. When $S_B$ is invertible the result follows immediately from the block structure of $S_B$ and orthogonality of $B$. 
\end{proof}

The Proposition allows us to minimize with respect to the block-diagonal matrix $S_B$. Letting each cell in the partition have dimension power of two, we can use Theorem \ref{thm:SVD_Opt} to find the optimal solution by applying CASG to each cell. While the minimizer may have a larger function value than minimizing $l_{D, \sigma, h}(S)$ directly, we have empirically found that carefully choosing $B$ leads to good gradient estimates. We now describe our heuristic method of choosing such a partition. 
\begin{algorithm}[h]
\caption{Subdivide Space (eCASG)}\label{alg:H_Subdivision}
\begin{algorithmic}[1]
\Procedure{Subdivide Space}{$D$} \Comment{$D$ is sorted in increasing order}
\State $d \gets \text{Dimension of } D$
\State $d_{bin} \gets \text{binary coefficients of } d$ \Comment{Least significant digit last} 
\State $start, end \gets 0, d$
\State $i \gets d_{bin}.length - 1$ \Comment{Cell index starts with most significant digit}
\State $B \gets \text{List of cells}$
\While{$start \leq end$}
    \If{$d_{bin}[i] = 1$  and \text{cell $B[i]$ is not full}}
        \If{$i = 0$}
            \State $B[i].add(D[start])$
            \State $start \gets start + 1$
        \Else
            \State $B[i].add(D[start])$
            \State $B[i].add(D[end])$
            \State $start \gets start + 1$
            \State $end \gets end - 1$
        \EndIf
    \EndIf
    \State $i \gets (i - 1) \text{ mod } d_{bin}.length$
\EndWhile
\State \Return B
\EndProcedure
\end{algorithmic}
\end{algorithm}

\paragraph{Choosing the Partition}
To use Theorem \ref{thm:SVD_Opt}, we enforce that the dimension of the span of each cell be a power of two. We do so by using a binary expansion of the dimension $d = \sum_{i = 1}^{\lfloor \log(d) \rfloor} c_i 2^i$, and adding a cell of size $2^i$ for every $c_i = 1$. It now remains to describe the subspace each cell will span. 

As developed in Section \ref{sec:intuition}, we use the intuition that when negative and positive curvature directions exist, they are paired together such that CASG can exploit zero-set lines. When only positive or negative curvature regions remain, we pair high with low curvature directions to allow CASG to align the sample set along the low curvature direction.

We describe the full eCASG method in Algorithm \ref{alg:H_Subdivision}. Since the Hessian is assumed to be diagonal, the curvature directions, i.e. eigenvectors, are given by the standard basis vectors. We proceed greedily by iterating through the partition cells, $B_i$, and adding the direction of highest curvature and the direction of lowest curvature from the set of possible directions. We then remove these directions from the set of possible directions and move to the next cell and repeat.

\section{Global Model Framework and Numerical Results}\label{sec:Numerics}

\subsection{Global Model Framework}
We now discuss the global model framework for obtaining the Hessian needed for computing CASG. The idea behind the framework is simple. In many applications, the function will be evaluated multiple times in various regions of the domain. While the information we obtain may not be accurate enough for a gradient estimate, it may still be valuable to guide us in selecting the right sample set for a local gradient estimate. In our framework we formalize this via Algorithm \ref{alg:global}. For the global model framework several input variables are needed. One is the evaluation history set $\mathcal{E}$ as described in Section \ref{sec:setup}. Then we need a model space and a model fitting function. We let the fitting function be given by $\Phi(\mathcal{E}) \rightarrow \phi$ which maps evaluation history set to a global model $\phi : \mathbb{R}^d \rightarrow \mathbb{R}$. In many applications we would not like to use the complete evaluation history, as some points may be poorly spread out for the chosen model space and fitting function. Hence, we allow for a filter function on the evaluation history $\mathcal{F}(\mathcal{E}) \rightarrow \mathcal{E}_{\mathcal{F}}$ where  $\mathcal{E}_{\mathcal{F}} \subseteq \mathcal{E}$. Lastly, we denote by $\mathcal{G}$ the gradient estimator which maps a Hessian estimate and the current evaluation point to a gradient estimate and also returns the function evaluations which happened as a consequence of estimating the gradient. We return the gradient estimate and the new function evaluation history.

\begin{algorithm}
\caption{Global Model Framework}\label{alg:global}
\begin{algorithmic}[1]
\Procedure{Global Model Framework}{$\mathcal{E}, \Phi, \mathcal{F}, \mathcal{G}, x_0$} 
\State $\mathcal{E}_{\mathcal{F}} \gets \mathcal{F}(\mathcal{E})$ \Comment{Filter evaluation history}
\State $\phi \gets \Phi(\mathcal{E}_{\mathcal{F}})$ \Comment{Fit global model}
\State $H_{global} = \nabla^2 \phi(x_0)$
\State $g, \mathcal{E}_{curr} \gets \mathcal{G}(H_{global}, x_0)$ \Comment{Grad estimate}
\State $\mathcal{E} \gets \mathcal{E} \cup \mathcal{E}_{curr}$
\Return \State $g, \mathcal{E}$
\EndProcedure
\end{algorithmic}
\end{algorithm}

While the framework requires the choice of a model space, fitting function, and an evaluation history filter, such parameters often arise from the application domain as we will see in the following applications. 

A natural question is that if the global model is good enough as a Hessian estimator, should it not also be a good gradient estimator. We will empirically demonstrate this to be not true. The intuition is that a gradient estimator is allowed to sample locally, while the global model will generally not have these local evaluations present in the evaluation history and hence be less accurate close to the evaluation point.

\subsubsection{Global Model: Cubic Spline}
For our experiments we found the cubic spline radial basis function to be a good global model. We use the implementation by \Verb+scipy+ \cite{2020SciPy-NMeth} and extend it to allow for the analytic computation of the gradient and Hessian once the model has been fit. When some of the points in the evaluation history are close together, and the fit needs to be regularized, we also used the smoothing parameter that is part of the fitting procedure. We will specify when we use the smoothing parameter.

\subsection{Sensitivity Analysis}
Sensitivity analysis is used to understand how the response of a system depends on small changes in the input. Often this is done by computing the gradient locally \cite{qian2020sensitivity}, which provides an ideal application area under which to assess CASG and eCASG. To replicate a corrupted system, we add additive Gaussian noise of which we assume to know the noise level as is common for assessing gradient estimation methods \cite{shi2022adaptive, berahas2019derivative, brekelmans2005gradient}.

We compare our method against forward (FD) and central (CD) differences and the gradient estimate given by the global model, i.e. $\nabla \phi$. For comparison, our method and FD will be given both global model information and exact Hessian information. Forward differences will be used as given in \cite{berahas2022theoretical}.  We use CD where the difference vector lengths are set to the step-size parameter $h$ \cite{burden2015numerical}. This is a fair comparison as CD has no second order error and is used as a best case comparison. For our CASG, eCASG, FD and CD, the step-size hyperparameter $h$ is found by a hyperparameter search. The global model gradient estimate is attained by simply differentiating the global model.

To assess the goodness of the gradient estimate by our method, FD and CD, we use the exact version of the mean squared error as given in Proposition \ref{prop:mse_noise}. While Proposition \ref{prop:mse_noise} is stated only for the standard simplex gradient, it can be immediately extended for central differences. The gradient estimate from the global method is assessed by considering the squared norm of the difference to the true gradient. For the global model there is no noise component as the randomness is only due to the function evaluation history, which we assume to be given for any local gradient estimate. 

The main results are given in Figures \ref{fig:Ackley_sens} and \ref{fig:Cancer_sens}. For Figures \ref{fig:exact_Ackley}, \ref{fig:RBF_Ackley}, \ref{fig:exact_Cancer} and \ref{fig:RBF_Cancer} we chose 100 points uniformly at random from the domain and for each gradient estimation method computed the log ratio as in \cite{more2009benchmarking} between the respective method and the given version of CASG, i.e. $\log_2(\frac{MSE_{\text{method}}}{MSE_{\text{CASG}}})$. 

To make the cost of using a function evaluation history more explicit we also plot the error of each method against the number of points in $\mathcal{E}$. Specifically, for Figures \ref{fig:Nerrs_Ackley} and \ref{fig:Nerrs_Cancer} the global method was fitted with a variable number of points as given by the x-axis. The error-bars represent the 25th and 75th percentile of the MSE values at the 100 randomly chosen points, with the line being the median.

For the global model we use no-smoothing, as the function evaluation points will be sufficiently spread out to mitigate the effect of noise. 

\subsubsection{Ackley function}
The Ackley function is a common test function due to its high degree of non-linearity and is given by $x_i \in [-0.5, 0.5]$
\begin{align*}
    f(x_0 \cdots x_n) &= -20 \exp(-0.2 \sqrt{\frac{1}{n} \sum_{i=1}^n x_i^2}) - \exp(\frac{1}{n} \sum_{i=1}^n \cos(2\pi x_i)) + 20 + e.
\end{align*}

For the experiment we set the dimension to $8$ and the noise level to $\sigma = 10^{-5}$. The set of step-sizes we consider as hyperparameter is $\{0.1, 0.05, 0.01\}$. For the RBF we sample $10^4$ points for the global model for the box plots. Since the dimension is $8$ we can use CASG directly. 
\begin{figure}[h]
\begin{minipage}[t]{0.32\textwidth}
    \centering
    \resizebox{\textwidth}{!}{
\begin{tikzpicture}

\definecolor{darkgray176}{RGB}{176,176,176}
\definecolor{darkorange}{RGB}{255,140,0}
\definecolor{slategray}{RGB}{112,128,144}

\begin{axis}[
tick align=outside,
tick pos=left,
x grid style={darkgray176},
xlabel={Gradient estimation method},
xmajorgrids,
xmin=0.5, xmax=4.5,
xminorgrids,
xtick style={color=black},
xtick={1,2,3,4},
xticklabels={CD,FD exact H,FD RBF H,RBF},
y grid style={darkgray176},
ylabel={Log\_2 ratio of MSE},
ymajorgrids,
ymin=-4.85578888621393, ymax=14.791561490649,
yminorgrids,
ytick style={color=black}
]
\addplot [ultra thick, darkorange]
table {%
0.775 -1.93918972493189
1.225 -1.93918972493189
1.225 -1.63195718086311
0.775 -1.63195718086311
0.775 -1.93918972493189
};
\addplot [very thick, slategray]
table {%
1 -1.93918972493189
1 -2.34259235823921
};
\addplot [very thick, slategray]
table {%
1 -1.63195718086311
1 -1.26951823401726
};
\addplot [very thick, black]
table {%
0.8875 -2.34259235823921
1.1125 -2.34259235823921
};
\addplot [very thick, black]
table {%
0.8875 -1.26951823401726
1.1125 -1.26951823401726
};
\addplot [black, mark=o, mark size=3, mark options={solid,fill opacity=0}, only marks]
table {%
1 -2.57519436501143
1 -2.46743239969635
1 -2.44869714479688
1 -3.96272750544743
1 -1.17082168978118
1 -0.258766509528899
1 -0.492219243183851
1 -0.822574809315738
};
\addplot [ultra thick, darkorange]
table {%
1.775 3.95065095014494
2.225 3.95065095014494
2.225 4.64444288652892
1.775 4.64444288652892
1.775 3.95065095014494
};
\addplot [very thick, slategray]
table {%
2 3.95065095014494
2 3.0583751043768
};
\addplot [very thick, slategray]
table {%
2 4.64444288652892
2 5.59503122753346
};
\addplot [very thick, black]
table {%
1.8875 3.0583751043768
2.1125 3.0583751043768
};
\addplot [very thick, black]
table {%
1.8875 5.59503122753346
2.1125 5.59503122753346
};
\addplot [black, mark=o, mark size=3, mark options={solid,fill opacity=0}, only marks]
table {%
2 2.88088212820547
2 2.59673350281449
2 2.53362303870701
2 2.82741763922701
};
\addplot [ultra thick, darkorange]
table {%
2.775 3.97830044534018
3.225 3.97830044534018
3.225 4.74469591401045
2.775 4.74469591401045
2.775 3.97830044534018
};
\addplot [very thick, slategray]
table {%
3 3.97830044534018
3 2.90491786716968
};
\addplot [very thick, slategray]
table {%
3 4.74469591401045
3 5.73292352526508
};
\addplot [very thick, black]
table {%
2.8875 2.90491786716968
3.1125 2.90491786716968
};
\addplot [very thick, black]
table {%
2.8875 5.73292352526508
3.1125 5.73292352526508
};
\addplot [black, mark=o, mark size=3, mark options={solid,fill opacity=0}, only marks]
table {%
3 2.61204713767528
3 2.54165582293965
3 2.82246118648005
};
\addplot [ultra thick, darkorange]
table {%
3.775 7.814579746207
4.225 7.814579746207
4.225 11.4145553102629
3.775 11.4145553102629
3.775 7.814579746207
};
\addplot [very thick, slategray]
table {%
4 7.814579746207
4 4.10631205888409
};
\addplot [very thick, slategray]
table {%
4 11.4145553102629
4 13.8985001098825
};
\addplot [very thick, black]
table {%
3.8875 4.10631205888409
4.1125 4.10631205888409
};
\addplot [very thick, black]
table {%
3.8875 13.8985001098825
4.1125 13.8985001098825
};
\addplot [ultra thick, slategray]
table {%
0.775 -1.71905319062284
1.225 -1.71905319062284
};
\addplot [ultra thick, slategray]
table {%
1.775 4.41847484462118
2.225 4.41847484462118
};
\addplot [ultra thick, slategray]
table {%
2.775 4.47547627884176
3.225 4.47547627884176
};
\addplot [ultra thick, slategray]
table {%
3.775 9.60231199307902
4.225 9.60231199307902
};
\end{axis}

\end{tikzpicture}}
    \subcaption{CASG with Exact Hessian}
    \label{fig:exact_Ackley}
\end{minipage}\hfill
\begin{minipage}[t]{0.32\textwidth}
    \centering
    \resizebox{\textwidth}{!}{
\begin{tikzpicture}

\definecolor{darkgray176}{RGB}{176,176,176}
\definecolor{darkorange}{RGB}{255,140,0}
\definecolor{slategray}{RGB}{112,128,144}

\begin{axis}[
tick align=outside,
tick pos=left,
x grid style={darkgray176},
xlabel={Gradient estimation method},
xmajorgrids,
xmin=0.5, xmax=4.5,
xminorgrids,
xtick style={color=black},
xtick={1,2,3,4},
xticklabels={CD,FD exact H,FD RBF H,RBF},
y grid style={darkgray176},
ylabel={Log\_2 ratio of MSE},
ymajorgrids,
ymin=-5.69834479674235, ymax=11.7619039870834,
yminorgrids,
ytick style={color=black}
]
\addplot [ultra thick, darkorange]
table {%
0.775 -3.79194590044491
1.225 -3.79194590044491
1.225 -2.57013163202006
0.775 -2.57013163202006
0.775 -3.79194590044491
};
\addplot [very thick, slategray]
table {%
1 -3.79194590044491
1 -4.90469712475027
};
\addplot [very thick, slategray]
table {%
1 -2.57013163202006
1 -1.4957887452366
};
\addplot [very thick, black]
table {%
0.8875 -4.90469712475027
1.1125 -4.90469712475027
};
\addplot [very thick, black]
table {%
0.8875 -1.4957887452366
1.1125 -1.4957887452366
};
\addplot [ultra thick, darkorange]
table {%
1.775 2.4578453477132
2.225 2.4578453477132
2.225 3.4457718131937
1.775 3.4457718131937
1.775 2.4578453477132
};
\addplot [very thick, slategray]
table {%
2 2.4578453477132
2 1.46860287882206
};
\addplot [very thick, slategray]
table {%
2 3.4457718131937
2 4.29843185126188
};
\addplot [very thick, black]
table {%
1.8875 1.46860287882206
2.1125 1.46860287882206
};
\addplot [very thick, black]
table {%
1.8875 4.29843185126188
2.1125 4.29843185126188
};
\addplot [ultra thick, darkorange]
table {%
2.775 2.54978428588405
3.225 2.54978428588405
3.225 3.47553861928768
2.775 3.47553861928768
2.775 2.54978428588405
};
\addplot [very thick, slategray]
table {%
3 2.54978428588405
3 1.51877681373981
};
\addplot [very thick, slategray]
table {%
3 3.47553861928768
3 4.31067237709107
};
\addplot [very thick, black]
table {%
2.8875 1.51877681373981
3.1125 1.51877681373981
};
\addplot [very thick, black]
table {%
2.8875 4.31067237709107
3.1125 4.31067237709107
};
\addplot [ultra thick, darkorange]
table {%
3.775 7.16780704297296
4.225 7.16780704297296
4.225 9.47396177334047
3.775 9.47396177334047
3.775 7.16780704297296
};
\addplot [very thick, slategray]
table {%
4 7.16780704297296
4 3.85929583503016
};
\addplot [very thick, slategray]
table {%
4 9.47396177334047
4 10.9682563150913
};
\addplot [very thick, black]
table {%
3.8875 3.85929583503016
4.1125 3.85929583503016
};
\addplot [very thick, black]
table {%
3.8875 10.9682563150913
4.1125 10.9682563150913
};
\addplot [ultra thick, slategray]
table {%
0.775 -3.20580883789029
1.225 -3.20580883789029
};
\addplot [ultra thick, slategray]
table {%
1.775 2.92465964370428
2.225 2.92465964370428
};
\addplot [ultra thick, slategray]
table {%
2.775 2.98597474426704
3.225 2.98597474426704
};
\addplot [ultra thick, slategray]
table {%
3.775 8.31705174159183
4.225 8.31705174159183
};
\end{axis}

\end{tikzpicture}}
    \subcaption{CASG with RBF Hessian}
    \label{fig:RBF_Ackley}
\end{minipage}\hfill
\begin{minipage}[t]{0.32\textwidth}
    \centering
    \resizebox{\textwidth}{!}{\input{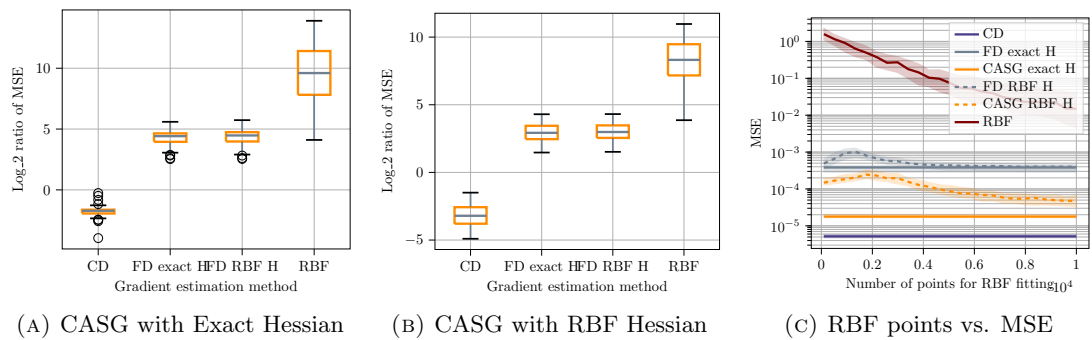}}
    \subcaption{RBF points vs. MSE}
    \label{fig:Nerrs_Ackley}
\end{minipage}\hfill

\caption{Comparison of gradient estimation methods for Ackley with $d=8$ and $\sigma=10^{-5}$.}
\label{fig:Ackley_sens}
\end{figure}

\subsubsection{ODE Model: Cell differentiation in the colon}
We consider a model for cell differentiation in the colon which has been used for sensitivity analysis \cite{qian2020sensitivity, johnston2007examples}. The process tracks three variables and is given by the following dynamics
\begin{align*}
    \frac{dN_0}{dt} &= (\alpha_3 - \alpha_1 - \alpha_2)N_0 - \frac{k_0 N_0^2}{1 + m_0 N_0} \\
    \frac{dN_1}{dt} &= (\beta_3 - \beta_1 - \beta_2) N_1 + \alpha_2 N_0 - \frac{c_1 N_1^2}{1 + m_1 N_1} + \frac{k_0 N_0^2}{1 + m_0 N_0} \\
    \frac{dN_2}{dt} &= -\gamma N_2 + \beta_2 N_1 + \frac{c_1 N_1^2}{1 + m_1 N_1}.
\end{align*}
In the dynamics, the time variable $t$ is in units of days, $N_0$ represents the stem cell population, $N_1$ the semi-differentiated cell population and $N_2$ the fully differentiated cell population. For the dynamics we fix the initial conditions to be $N_0=1$, $N_1=100$, $N_2=100$ and consider a time-span of $100$ days. The remaining coefficients are the independent variables with which we perform the sensitivity analysis. Their interpretation and recommended values are given in Table 11 of \cite{qian2020sensitivity}. $N_0$, the stem cell population, will be the dependent variable against which the sensitivity analysis is performed. 

The domain we use is $\pm 10\%$ of the suggested values listed in \cite{qian2020sensitivity}. To evaluate the differential equation we use the standard one-step explicit Euler scheme with a step size of $0.01$, amounting to $10^4$ steps per equation. To simulate noise in the observation of the final sample, we add noise with $\sigma=10^{-3}$ which we found to be significantly more than the numerical error in simulating the dynamics. To get an understanding of how dynamics can change with different input parameters, we sample choose different parameters from the specified domain and plot them in Figure \ref{fig:Cancer_vis}. 

The dynamics do not have an analytic gradient easily available. Instead, we use central differences with a step-size of $10^{-11}$ to construct a gradient from the process without noise. We found the error due to machine precision and the discretization of the dynamics to be insignificant compared to the artificial noise added. To obtain the true Hessian we apply the same procedure to the estimated gradient values. 
\begin{figure}
\begin{subfigure}[t]{0.32\textwidth}
    \centering
    \resizebox{\textwidth}{!}{
\begin{tikzpicture}

\definecolor{darkgray176}{RGB}{176,176,176}
\definecolor{lightgray204}{RGB}{204,204,204}

\begin{axis}[
legend cell align={left},
legend style={
  fill opacity=0.8,
  draw opacity=1,
  text opacity=1,
  at={(0.97,0.03)},
  anchor=south east,
  draw=lightgray204
},
tick align=outside,
tick pos=left,
x grid style={darkgray176},
xlabel={Time (days)},
xmin=-3.95, xmax=104.95,
xtick style={color=black},
y grid style={darkgray176},
ylabel={Cell Population},
ymin=0.79643105734517, ymax=5.27494779575143,
ytick style={color=black}
]
\addplot [very thick, black, dotted]
table {%
1 1
1.99 1.23741264622317
2.98 1.50352081980304
3.97 1.7922145090399
4.96 2.09530142564936
5.95 2.40352763124358
6.94 2.70773858492524
7.93 2.99988342689019
8.92 3.27366859957791
9.91 3.52481295248261
10.9 3.7509712407462
11.89 3.95144538778689
12.88 4.12680179663179
13.87 4.27848333590537
14.86 4.40846856702883
15.85 4.51900077702917
16.84 4.61238938441009
17.83 4.6908754255229
18.82 4.7565485068696
19.81 4.81130221459177
20.8 4.85681653692049
21.79 4.89455812403793
22.78 4.92579149148433
23.77 4.95159625036447
24.76 4.97288702047702
25.75 4.99043386293835
26.74 5.00488191584986
27.73 5.01676950109743
28.72 5.02654435881436
29.71 5.03457791359236
30.7 5.04117762611377
31.69 5.046597567387
32.68 5.05104739307543
33.67 5.05469990848517
34.66 5.05769741148163
35.65 5.06015698815311
36.64 5.0621749190903
37.63 5.06383033562167
38.62 5.06518824697954
39.61 5.06630204215073
40.6 5.06721555457196
41.59 5.06796476404572
42.58 5.06857919827149
43.57 5.0690830861057
44.56 5.06949630592322
45.55 5.06983516507716
46.54 5.07011304026461
47.53 5.07034090343592
48.52 5.0705277535815
49.51 5.07068097115847
50.5 5.07080660896151
51.49 5.07090963079724
52.48 5.07099410730366
53.47 5.07106337659236
54.46 5.07112017602158
55.45 5.07116675028041
56.44 5.07120494003768
57.43 5.07123625464679
58.42 5.07126193177181
59.41 5.07128298628605
60.4 5.07130025037194
61.39 5.07131440640495
62.38 5.07132601391933
63.37 5.07133553172068
64.36 5.07134333601843
65.35 5.07134973529458
66.34 5.07135498249594
67.33 5.07135928503173
68.32 5.07136281297147
69.31 5.07136570576705
70.3 5.07136807776479
71.29 5.07137002272509
72.28 5.07137161752855
73.27 5.07137292521481
74.26 5.07137399747436
75.25 5.07137487669174
76.24 5.07137559762088
77.23 5.0713761887589
78.22 5.07137667347247
79.21 5.07137707092151
80.2 5.07137739681653
81.19 5.07137766403963
82.18 5.07137788315369
83.17 5.07137806281994
84.16 5.07137821014029
85.15 5.07137833093808
86.14 5.07137842998826
87.13 5.07137851120611
88.12 5.07137857780205
89.11 5.0713786324085
90.1 5.07137867718398
91.09 5.07137871389838
92.08 5.07137874400296
93.07 5.07137876868772
94.06 5.0713787889284
95.05 5.07137880552509
96.04 5.07137881913382
97.03 5.07137883029252
98.02 5.07137883944229
99.01 5.0713788469448
100 5.0713788530966
};
\addlegendentry{Seed 0}
\addplot [very thick, black, dashed]
table {%
1 1
1.99 1.19212757262914
2.98 1.40069065138351
3.97 1.62129333864956
4.96 1.84866668352845
5.95 2.07720095891616
6.94 2.30149784496627
7.93 2.51683335823812
8.92 2.71946252219749
9.91 2.9067480541328
10.9 3.07713702585627
11.89 3.23003161232327
12.88 3.36560358734869
13.87 3.4845938108284
14.86 3.58812482852389
15.85 3.67754198425151
16.84 3.7542886156952
17.83 3.81981451626728
18.82 3.87551340505471
19.81 3.92268381273481
20.8 3.96250775542179
21.79 3.99604220995614
22.78 4.02421931011258
23.77 4.04785211138705
24.76 4.06764360042715
25.75 4.08419730712942
26.74 4.09802840853027
27.73 4.10957460999403
28.72 4.11920637455762
29.71 4.12723626991918
30.7 4.13392733605837
31.69 4.13950046260635
32.68 4.14414081771473
33.67 4.14800339974849
34.66 4.15121779728576
35.65 4.15389224710047
36.64 4.15611707783652
37.63 4.15796762155238
38.62 4.15950666795443
39.61 4.16078652806479
40.6 4.16185076598629
41.59 4.16273564974332
42.58 4.16347136511651
43.57 4.16408303004966
44.56 4.16459154160914
45.55 4.16501428259541
46.54 4.16536571069014
47.53 4.16565784940843
48.52 4.16590069704482
49.51 4.16610256718843
50.5 4.16627037217344
51.49 4.16640985896955
52.48 4.16652580545203
53.47 4.16662218367837
54.46 4.16670229569835
55.45 4.16676888650484
56.44 4.16682423796363
57.43 4.16687024691926
58.42 4.16690849013806
59.41 4.16694027830392
60.4 4.16696670090969
61.39 4.16698866357823
62.38 4.16700691908852
63.37 4.16702209316802
64.36 4.16703470593359
65.35 4.16704518971487
66.34 4.16705390387021
67.33 4.16706114710242
68.32 4.16706716769618
69.31 4.16707217202774
70.3 4.16707633163852
71.29 4.16707978911474
72.28 4.16708266297478
73.27 4.16708505173165
74.26 4.16708703726973
75.25 4.16708868765158
76.24 4.16709005945099
77.23 4.16709119969223
78.22 4.167092147462
79.21 4.16709293524921
80.2 4.16709359005874
81.19 4.16709413433705
82.18 4.1670945867417
83.17 4.16709496278085
84.16 4.16709527534494
85.15 4.16709553514848
86.14 4.16709575109741
87.13 4.16709593059434
88.12 4.16709607979235
89.11 4.16709620380586
90.1 4.16709630688599
91.09 4.16709639256628
92.08 4.16709646378381
93.07 4.16709652297988
94.06 4.1670965721837
95.05 4.16709661308195
96.04 4.16709664707661
97.03 4.167096675333
98.02 4.16709669881974
99.01 4.16709671834193
100 4.16709673456879
};
\addlegendentry{Seed 1}
\addplot [very thick, black, dash pattern=on 1pt off 3pt on 3pt off 3pt]
table {%
1 1
1.99 1.24129578691552
2.98 1.51089590655208
3.97 1.80168252471494
4.96 2.10442851213679
5.95 2.40904854077654
6.94 2.70593797458122
7.93 2.98704580228614
8.92 3.24648368638932
9.91 3.48066081426979
10.9 3.6880622382325
11.89 3.86883059690271
12.88 4.02429056114955
13.87 4.15650846214729
14.86 4.26793314502798
15.85 4.36113024148447
16.84 4.43860239398381
17.83 4.50267933784449
18.82 4.55546003651667
19.81 4.59879090532422
20.8 4.63426734176114
21.79 4.66324908139581
22.78 4.68688275790957
23.77 4.70612729030963
24.76 4.72177936972405
25.75 4.73449747023097
26.74 4.74482357786741
27.73 4.75320232375846
28.72 4.75999750560104
29.71 4.76550615014204
30.7 4.76997035353319
31.69 4.77358716822322
32.68 4.77651680567031
33.67 4.77888940754971
34.66 4.78081061311757
35.65 4.7823661223746
36.64 4.78362542680272
37.63 4.78464485343401
38.62 4.7854700446634
39.61 4.78613797580336
40.6 4.78667859485523
41.59 4.78711615412948
42.58 4.78747029089993
43.57 4.78775690391738
44.56 4.78798886403617
45.55 4.78817659014664
46.54 4.78832851581101
47.53 4.78845146725767
48.52 4.78855096951528
49.51 4.78863149431078
50.5 4.78869666078522
51.49 4.78874939799174
52.48 4.78879207644206
53.47 4.78882661459023
54.46 4.78885456502412
55.45 4.78887718422881
56.44 4.78889548905113
57.43 4.78891030239918
58.42 4.78892229022848
59.41 4.78893199147543
60.4 4.78893984228273
61.39 4.78894619560477
62.38 4.78895133707418
63.37 4.78895549784227
64.36 4.78895886497048
65.35 4.78896158983999
66.34 4.78896379495728
67.33 4.78896557946168
68.32 4.78896702358241
69.31 4.78896819224555
70.3 4.78896913799297
71.29 4.78896990334449
72.28 4.78897052270957
73.27 4.78897102393427
74.26 4.78897142955319
75.25 4.78897175780259
76.24 4.78897202344025
77.23 4.78897223840902
78.22 4.7889724123737
79.21 4.78897255315558
80.2 4.78897266708411
81.19 4.7889727592814
82.18 4.78897283389258
83.17 4.78897289427208
84.16 4.78897294313454
85.15 4.78897298267675
86.14 4.7889730146765
87.13 4.78897304057248
88.12 4.78897306152894
89.11 4.78897307848807
90.1 4.78897309221235
91.09 4.78897310331879
92.08 4.78897311230675
93.07 4.7889731195803
94.06 4.78897312546646
95.05 4.78897313022987
96.04 4.78897313408468
97.03 4.78897313720421
98.02 4.78897313972871
99.01 4.78897314177168
100 4.78897314342495
};
\addlegendentry{Seed 2}
\end{axis}

\end{tikzpicture}}
    \label{fig:Cancer_1}
    \subcaption{Stem cell population}
\end{subfigure}\hfill
\begin{subfigure}[t]{0.32\textwidth}
    \centering
    \resizebox{\textwidth}{!}{
\begin{tikzpicture}

\definecolor{darkgray176}{RGB}{176,176,176}
\definecolor{lightgray204}{RGB}{204,204,204}

\begin{axis}[
legend cell align={left},
legend style={
  fill opacity=0.8,
  draw opacity=1,
  text opacity=1,
  at={(0.03,0.97)},
  anchor=north west,
  draw=lightgray204
},
tick align=outside,
tick pos=left,
x grid style={darkgray176},
xlabel={Time (days)},
xmin=-3.95, xmax=104.95,
xtick style={color=black},
y grid style={darkgray176},
ylabel={Cell Population},
ymin=74.124470306005, ymax=142.219170864975,
ytick style={color=black}
]
\addplot [very thick, black, dotted]
table {%
1 100
1.99 99.2352543399553
2.98 98.6223037866236
3.97 98.1769380481944
4.96 97.9121946223307
5.95 97.8364616359544
6.94 97.9521720851516
7.93 98.2553579148661
8.92 98.7360712844189
9.91 99.3794637360108
10.9 100.167215065788
11.89 101.079019220024
12.88 102.093916673462
13.87 103.191360415188
14.86 104.351983642825
15.85 105.55809011099
16.84 106.79391472467
17.83 108.045709757472
18.82 109.301708727417
19.81 110.552011425067
20.8 111.788423662178
21.79 113.004276048678
22.78 114.194238391958
23.77 115.354140341097
24.76 116.480804523169
25.75 117.571895359367
26.74 118.625784704355
27.73 119.641434154279
28.72 120.618293098344
29.71 121.556211178262
30.7 122.455363646023
31.69 123.316188085959
32.68 124.139331031626
33.67 124.925603121211
34.66 125.675941570687
35.65 126.391378885162
36.64 127.073016866069
37.63 127.722005099388
38.62 128.339523225487
39.61 128.926766393416
40.6 129.48493339196
41.59 130.015217027096
42.58 130.518796381973
43.57 130.99683065224
44.56 131.450454297748
45.55 131.88077329251
46.54 132.288862289315
47.53 132.675762544505
48.52 133.042480473
49.51 133.389986724275
50.5 133.719215687443
51.49 134.031065348189
52.48 134.326397432663
53.47 134.606037783779
54.46 134.87077692413
55.45 135.121370767077
56.44 135.35854144377
57.43 135.582978219104
58.42 135.795338474025
59.41 135.996248735331
60.4 136.186305737258
61.39 136.366077501824
62.38 136.536104427121
63.37 136.6969003747
64.36 136.848953748748
65.35 136.992728561167
66.34 137.12866547779
67.33 137.25718284195
68.32 137.378677672439
69.31 137.493526633596
70.3 137.60208697583
71.29 137.70469744539
72.28 137.801679162605
73.27 137.89333646814
74.26 137.979957737144
75.25 138.061816161345
76.24 138.139170499401
77.23 138.21226579593
78.22 138.281334069798
79.21 138.34659497234
80.2 138.408256416266
81.19 138.466515176073
82.18 138.521557460825
83.17 138.573559460205
84.16 138.622687864754
85.15 138.669100361231
86.14 138.712946104035
87.13 138.754366163615
88.12 138.793493952807
89.11 138.830455632002
90.1 138.865370494048
91.09 138.898351329764
92.08 138.929504774922
93.07 138.958931639537
94.06 138.986727220267
95.05 139.012981596709
96.04 139.037779912349
97.03 139.061202640896
98.02 139.083325838701
99.01 139.10422138394
100 139.123957203203
};
\addlegendentry{Seed 0}
\addplot [very thick, black, dashed]
table {%
1 100
1.99 97.0184912466875
2.98 94.3007065148146
3.97 91.8402400164883
4.96 89.6301909573598
5.95 87.6622185868419
6.94 85.9259307182913
7.93 84.4086725737087
8.92 83.0956939043618
9.91 81.9706019862596
10.9 81.0159752992079
11.89 80.2140169602072
12.88 79.5471552518099
13.87 78.9985352417305
14.86 78.5523789344207
15.85 78.1942159325483
16.84 77.9110012556726
17.83 77.6911434416728
18.82 77.5244668967533
19.81 77.4021300503873
20.8 77.3165170115826
21.79 77.2611162730799
22.78 77.2303962117473
23.77 77.2196839677764
24.76 77.2250518145198
25.75 77.2432133009783
26.74 77.2714301566548
27.73 77.3074300795134
28.72 77.3493349767279
29.71 77.3955989063331
30.7 77.4449548059297
31.69 77.496369039397
32.68 77.5490028055509
33.67 77.6021795065233
34.66 77.6553572495448
35.65 77.7081057412658
36.64 77.7600869207326
37.63 77.8110387607503
38.62 77.8607617448618
39.61 77.9091075972211
40.6 77.9559699048251
41.59 78.0012763260213
42.58 78.0449821264274
43.57 78.0870648239878
44.56 78.1275197595963
45.55 78.1663564392058
46.54 78.2035955183336
47.53 78.2392663209588
48.52 78.2734048025585
49.51 78.3060518819469
50.5 78.3372520790823
51.49 78.3670524064827
52.48 78.3955014706414
53.47 78.4226487471509
54.46 78.4485439993515
55.45 78.4732368154056
56.44 78.4967762429532
57.43 78.5192105040306
58.42 78.5405867758864
59.41 78.5609510257742
60.4 78.580347889844
61.39 78.5988205879481
62.38 78.616410867589
63.37 78.6331589714085
64.36 78.6491036235889
65.35 78.6642820313503
66.34 78.6787298983962
67.33 78.6924814477169
68.32 78.7055694516268
69.31 78.7180252672889
70.3 78.7298788763038
71.29 78.7411589271995
72.28 78.7518927798784
73.27 78.762106551257
74.26 78.7718251614842
75.25 78.7810723802447
76.24 78.7898708727585
77.23 78.7982422451678
78.22 78.806207089074
79.21 78.8137850250407
80.2 78.8209947449289
81.19 78.8278540529635
82.18 78.8343799054644
83.17 78.8405884491983
84.16 78.8464950583288
85.15 78.8521143699561
86.14 78.8574603182537
87.13 78.8625461672167
88.12 78.8673845420455
89.11 78.8719874591943
90.1 78.8763663551195
91.09 78.8805321137646
92.08 78.8844950928214
93.07 78.8882651488104
94.06 78.8918516610211
95.05 78.8952635543557
96.04 78.8985093211186
97.03 78.9015970417956
98.02 78.9045344048614
99.01 78.9073287256591
100 78.9099869643904
};
\addlegendentry{Seed 1}
\addplot [very thick, black, dash pattern=on 1pt off 3pt on 3pt off 3pt]
table {%
1 100
1.99 98.2268254849788
2.98 96.6704813206481
3.97 95.342119350211
4.96 94.2497471549478
5.95 93.3961469096721
6.94 92.777615739911
7.93 92.3837803314598
8.92 92.1984019079889
9.91 92.2008452181875
10.9 92.3678104361886
11.89 92.6749905646609
12.88 93.0984440191986
13.87 93.6155975184334
14.86 94.2058850854987
15.85 94.8510791374548
16.84 95.5353877994121
17.83 96.2453908164976
18.82 96.969874964931
19.81 97.6996154628471
20.8 98.4271362613636
21.79 99.1464708430513
22.78 99.8529366193726
23.77 100.542929947815
24.76 101.213744707931
25.75 101.863414785536
26.74 102.490579283643
27.73 103.094368466712
28.72 103.674308099517
29.71 104.230239785166
30.7 104.762255017303
31.69 105.270640859544
32.68 105.75583540025
33.67 106.218391372098
34.66 106.658946556231
35.65 107.078199800886
36.64 107.476891670676
37.63 107.855788904533
38.62 108.215671998805
39.61 108.557325349371
40.6 108.881529485152
41.59 109.189055007749
42.58 109.480657920267
43.57 109.757076085061
44.56 110.01902659687
45.55 110.26720389632
46.54 110.502278480493
47.53 110.724896093233
48.52 110.935677299302
49.51 111.135217363942
50.5 111.324086373842
51.49 111.502829547219
52.48 111.671967690442
53.47 111.831997766465
54.46 111.983393546895
55.45 112.12660632476
56.44 112.262065669448
57.43 112.390180208845
58.42 112.511338426606
59.41 112.625909464913
60.4 112.734243925046
61.39 112.836674659669
62.38 112.933517552101
63.37 113.025072278868
64.36 113.111623052777
65.35 113.193439344414
66.34 113.270776580602
67.33 113.343876818809
68.32 113.4129693969
69.31 113.478271557927
70.3 113.539989049918
71.29 113.598316700814
72.28 113.653438968889
73.27 113.705530469071
74.26 113.754756475739
75.25 113.801273402581
76.24 113.845229260215
77.23 113.886764092263
78.22 113.926010390631
79.21 113.963093490749
80.2 113.998131947527
81.19 114.031237892797
82.18 114.062517375004
83.17 114.092070681877
84.16 114.119992646838
85.15 114.146372939854
86.14 114.171296343431
87.13 114.194843014442
88.12 114.217088732439
89.11 114.238105135091
90.1 114.257959941359
91.09 114.276717163009
92.08 114.294437305014
93.07 114.311177555416
94.06 114.326991965153
95.05 114.341931618351
96.04 114.356044793585
97.03 114.369377116534
98.02 114.381971704497
99.01 114.393869303165
100 114.405108416068
};
\addlegendentry{Seed 2}
\end{axis}

\end{tikzpicture}}
    \label{fig:Cancer_2}
    \subcaption{Semi-Differentiated cell population}
\end{subfigure}\hfill
\begin{subfigure}[t]{0.32\textwidth}
    \centering
    \resizebox{\textwidth}{!}{
\begin{tikzpicture}

\definecolor{darkgray176}{RGB}{176,176,176}
\definecolor{lightgray204}{RGB}{204,204,204}

\begin{axis}[
legend cell align={left},
legend style={
  fill opacity=0.8,
  draw opacity=1,
  text opacity=1,
  at={(0.03,0.97)},
  anchor=north west,
  draw=lightgray204
},
tick align=outside,
tick pos=left,
x grid style={darkgray176},
xlabel={Time (days)},
xmin=-3.95, xmax=104.95,
xtick style={color=black},
y grid style={darkgray176},
ylabel={Cell Population},
ymin=88.3557862329637, ymax=344.528489107763,
ytick style={color=black}
]
\addplot [very thick, black, dotted]
table {%
1 100
1.99 116.105986253389
2.98 130.021307914979
3.97 142.064520521777
4.96 152.519436981005
5.95 161.63827009212
6.94 169.643898798909
7.93 176.731590886104
8.92 183.070513970892
9.91 188.805291982733
10.9 194.057751802689
11.89 198.928895309598
12.88 203.50105248476
13.87 207.840129252066
14.86 211.997853737566
15.85 216.013935525794
16.84 219.918073487205
17.83 223.731770984366
18.82 227.469938137549
19.81 231.142277324323
20.8 234.754459836312
21.79 238.309109126523
22.78 241.806610206708
23.77 245.245766390142
24.76 248.624324508381
25.75 251.939388588242
26.74 255.187740224453
27.73 258.366081852334
28.72 261.471217032031
29.71 264.500179837762
30.7 267.450323580653
31.69 270.319377420404
32.68 273.105477951764
33.67 275.807181582847
34.66 278.423462440594
35.65 280.953699626353
36.64 283.397656881996
37.63 285.755457094475
38.62 288.027553545822
39.61 290.214699389317
40.6 292.317916485884
41.59 294.338464454641
42.58 296.277810566531
43.57 298.137600930585
44.56 299.919633280352
45.55 301.625831556609
46.54 303.258222395983
47.53 304.818913568651
48.52 306.310074358212
49.51 307.733917839623
50.5 309.092684984347
51.49 310.388630503294
52.48 311.624010325995
53.47 312.801070607397
54.46 313.922038150372
55.45 314.989112131804
56.44 316.004457021977
57.43 316.970196590486
58.42 317.888408896522
59.41 318.761122166772
60.4 319.590311469976
61.39 320.377896103315
62.38 321.125737611933
63.37 321.835638368981
64.36 322.509340649535
65.35 323.14852613743
66.34 323.754815809525
67.33 324.329770147033
68.32 324.874889628384
69.31 325.391615462614
70.3 325.881330526416
71.29 326.345360471862
72.28 326.78497497536
73.27 327.201389101656
74.26 327.595764759641
75.25 327.969212229475
76.24 328.322791742943
77.23 328.657515101216
78.22 328.974347316194
79.21 329.274208263395
80.2 329.557974336016
81.19 329.826480091227
82.18 330.080519881058
83.17 330.320849461437
84.16 330.548187573922
85.15 330.763217495644
86.14 330.966588553753
87.13 331.158917601398
88.12 331.340790452904
89.11 331.512763276349
90.1 331.675363942263
91.09 331.829093327558
92.08 331.974426574194
93.07 332.111814302392
94.06 332.241683778468
95.05 332.364440037627
96.04 332.480466962214
97.03 332.590128316115
98.02 332.693768736122
99.01 332.791714681209
100 332.884275340727
};
\addlegendentry{Seed 0}
\addplot [very thick, black, dashed]
table {%
1 100
1.99 117.008060187344
2.98 130.915913328156
3.97 142.207774724362
4.96 151.302933281975
5.95 158.563843534557
6.94 164.302929663757
7.93 168.788387933188
8.92 172.249241029632
9.91 174.879844143533
10.9 176.843980645241
11.89 178.27862809416
12.88 179.297431811992
13.87 179.993896180612
14.86 180.444291378282
15.85 180.710271325929
16.84 180.84120274263
17.83 180.876211796461
18.82 180.845961472161
19.81 180.774178151696
20.8 180.67894952119
21.79 180.573817786865
22.78 180.468692572032
23.77 180.370607138289
24.76 180.2843400849
25.75 180.212922737349
26.74 180.158050278738
27.73 180.120412479471
28.72 180.099957758273
29.71 180.096102333042
30.7 180.107894431975
31.69 180.134141949066
32.68 180.173510543053
33.67 180.224597985015
34.66 180.285989541409
35.65 180.356298318174
36.64 180.434193768635
37.63 180.518420965019
38.62 180.607812733309
39.61 180.701296338547
40.6 180.797896068711
41.59 180.896732788147
42.58 180.997021305813
43.57 181.09806622059
44.56 181.1992567581
45.55 181.300060994715
46.54 181.400019769348
47.53 181.498740507976
48.52 181.595891125946
49.51 181.691194125979
50.5 181.784420973055
51.49 181.875386798808
52.48 181.963945466338
53.47 182.049985009743
54.46 182.133423450475
55.45 182.21420498359
56.44 182.292296520674
57.43 182.367684571879
58.42 182.440372446776
59.41 182.510377752168
60.4 182.577730164404
61.39 182.642469453717
62.38 182.704643738653
63.37 182.764307949508
64.36 182.821522480756
65.35 182.876352013698
66.34 182.928864491856
67.33 182.979130232949
68.32 183.027221162634
69.31 183.073210156463
70.3 183.117170477736
71.29 183.159175300111
72.28 183.199297304899
73.27 183.237608344015
74.26 183.274179160473
75.25 183.30907915918
76.24 183.342376221577
77.23 183.374136558373
78.22 183.404424595291
79.21 183.433302887292
80.2 183.460832057323
81.19 183.487070756035
82.18 183.512075639424
83.17 183.535901361639
84.16 183.55860058061
85.15 183.580223974401
86.14 183.600820266479
87.13 183.620436258317
88.12 183.639116867968
89.11 183.656905173408
90.1 183.673842459637
91.09 183.689968268643
92.08 183.705320451483
93.07 183.719935221813
94.06 183.733847210336
95.05 183.747089519681
96.04 183.759693779329
97.03 183.771690200252
98.02 183.783107628984
99.01 183.7939736009
100 183.804314392521
};
\addlegendentry{Seed 1}
\addplot [very thick, black, dash pattern=on 1pt off 3pt on 3pt off 3pt]
table {%
1 100
1.99 118.193548445568
2.98 133.466119080743
3.97 146.272660128733
4.96 157.011997658386
5.95 166.032729644166
6.94 173.637803928626
7.93 180.088274259407
8.92 185.606686656693
9.91 190.380414907686
10.9 194.565098031981
11.89 198.288190198369
12.88 201.652543930075
13.87 204.739911965237
14.86 207.614257426504
15.85 210.324787896595
16.84 212.908661903151
17.83 215.393347039604
18.82 217.79863320926
19.81 220.138321259678
20.8 222.421617313445
21.79 224.654267839867
22.78 226.839471478355
23.77 228.978602144214
24.76 231.071775049473
25.75 233.118283691536
26.74 235.116932092721
27.73 237.0662829199
28.72 238.964838747826
29.71 240.811170734771
30.7 242.604006379178
31.69 244.342285810848
32.68 246.025194209983
33.67 247.652176403559
34.66 249.222938419567
35.65 250.737439745344
36.64 252.195879199343
37.63 253.598676653134
38.62 254.946452303402
39.61 256.240004767588
40.6 257.480288940978
41.59 258.668394290064
42.58 259.805524052776
43.57 260.892975658804
44.56 261.932122563296
45.55 262.924397596834
46.54 263.871277867521
47.53 264.774271202252
48.52 265.634904079623
49.51 266.45471098334
50.5 267.235225089766
51.49 267.977970194418
52.48 268.684453778121
53.47 269.356161112948
54.46 269.994550310017
55.45 270.601048214916
56.44 271.177047061382
57.43 271.723901799484
58.42 272.242928020542
59.41 272.735400407174
60.4 273.202551642954
61.39 273.645571722129
62.38 274.065607605497
63.37 274.463763173975
64.36 274.841099436365
65.35 275.198634952516
66.34 275.537346437391
67.33 275.858169515448
68.32 276.161999598349
69.31 276.449692862272
70.3 276.722067304008
71.29 276.979903857687
72.28 277.223947556335
73.27 277.454908724591
74.26 277.673464190783
75.25 277.880258508272
76.24 278.075905177426
77.23 278.260987860916
78.22 278.436061586205
79.21 278.6016539301
80.2 278.758266181132
81.19 278.906374476344
82.18 279.046430909709
83.17 279.178864610039
84.16 279.304082786732
85.15 279.422471742165
86.14 279.534397849929
87.13 279.640208498413
88.12 279.74023299956
89.11 279.834783462813
90.1 279.924155634513
91.09 280.00862970315
92.08 280.088471071016
93.07 280.163931092934
94.06 280.235247782812
95.05 280.302646488864
96.04 280.366340538366
97.03 280.426531852909
98.02 280.48341153508
99.01 280.53716042757
100 280.5879496457
};
\addlegendentry{Seed 2}
\end{axis}

\end{tikzpicture}}
    \label{fig:Cancer_3}
    \subcaption{Fully Differentiated cell population}
\end{subfigure}\hfill
\caption{Visualization of ODE path with three randomly chosen parameters around the recommended values.}
\label{fig:Cancer_vis}
\end{figure}
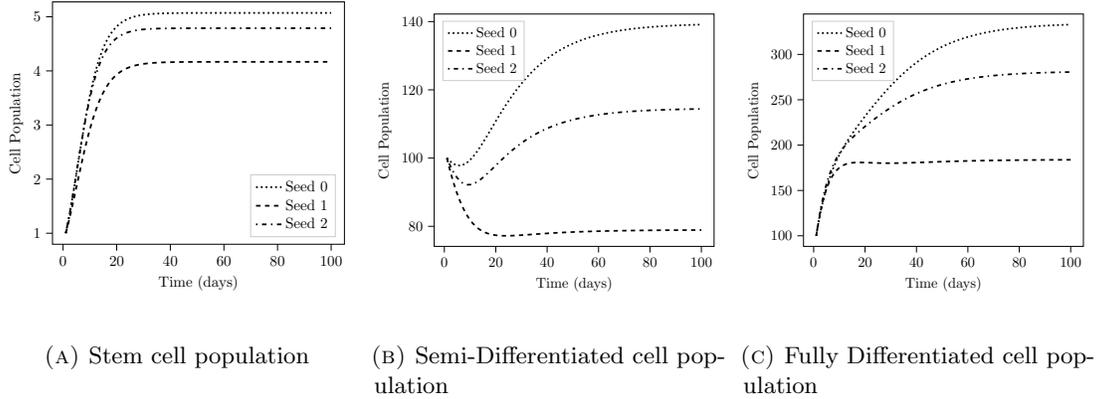
The range of step-sizes for the gradient estimation is $\{3.16 \times 10^{-4}, 10^{-3}, 3.16 \times 10^{-3}, 10^{-2}, 3.16 \times 10^{-2}, 10^{-1}\}$. Since the number of variables is eleven, we use eCASG as described in Section \ref{sec:subspace}. For Figure \ref{fig:RBF_Cancer} the global model is fitted with $1,000$ points. 
\begin{figure}[h!]
\begin{minipage}[t]{0.32\textwidth}
    \centering
    \resizebox{\textwidth}{!}{
\begin{tikzpicture}

\definecolor{darkgray176}{RGB}{176,176,176}
\definecolor{darkorange}{RGB}{255,140,0}
\definecolor{slategray}{RGB}{112,128,144}

\begin{axis}[
tick align=outside,
tick pos=left,
x grid style={darkgray176},
xlabel={Gradient estimation method},
xmajorgrids,
xmin=0.5, xmax=4.5,
xminorgrids,
xtick style={color=black},
xtick={1,2,3,4},
xticklabels={CD,FD exact H,FD RBF H,RBF},
y grid style={darkgray176},
ylabel={Log\_2 ratio of MSE},
ymajorgrids,
ymin=0.923250635456771, ymax=9.90739292570078,
yminorgrids,
ytick style={color=black}
]
\addplot [ultra thick, darkorange]
table {%
0.775 1.64244287266919
1.225 1.64244287266919
1.225 2.24052653298308
0.775 2.24052653298308
0.775 1.64244287266919
};
\addplot [thick, slategray]
table {%
1 1.64244287266919
1 1.33162073955877
};
\addplot [thick, slategray]
table {%
1 2.24052653298308
1 3.08051293148887
};
\addplot [thick, black]
table {%
0.8875 1.33162073955877
1.1125 1.33162073955877
};
\addplot [thick, black]
table {%
0.8875 3.08051293148887
1.1125 3.08051293148887
};
\addplot [ultra thick, darkorange]
table {%
1.775 4.92780211568513
2.225 4.92780211568513
2.225 5.26852429618165
1.775 5.26852429618165
1.775 4.92780211568513
};
\addplot [thick, slategray]
table {%
2 4.92780211568513
2 4.43051778096231
};
\addplot [thick, slategray]
table {%
2 5.26852429618165
2 5.50030024724763
};
\addplot [thick, black]
table {%
1.8875 4.43051778096231
2.1125 4.43051778096231
};
\addplot [thick, black]
table {%
1.8875 5.50030024724763
2.1125 5.50030024724763
};
\addplot [black, mark=o, mark size=3, mark options={solid,fill opacity=0}, only marks]
table {%
2 4.27904247770956
2 4.38068895334355
2 4.22375058489392
2 4.2016645031843
};
\addplot [ultra thick, darkorange]
table {%
2.775 5.1036828350546
3.225 5.1036828350546
3.225 5.40205168182309
2.775 5.40205168182309
2.775 5.1036828350546
};
\addplot [thick, slategray]
table {%
3 5.1036828350546
3 4.7064421178144
};
\addplot [thick, slategray]
table {%
3 5.40205168182309
3 5.63547043985872
};
\addplot [thick, black]
table {%
2.8875 4.7064421178144
3.1125 4.7064421178144
};
\addplot [thick, black]
table {%
2.8875 5.63547043985872
3.1125 5.63547043985872
};
\addplot [black, mark=o, mark size=3, mark options={solid,fill opacity=0}, only marks]
table {%
3 4.63864920082838
};
\addplot [ultra thick, darkorange]
table {%
3.775 7.16970801789136
4.225 7.16970801789136
4.225 8.09465905795993
3.775 8.09465905795993
3.775 7.16970801789136
};
\addplot [thick, slategray]
table {%
4 7.16970801789136
4 5.7925973816951
};
\addplot [thick, slategray]
table {%
4 8.09465905795993
4 9.15831942534157
};
\addplot [thick, black]
table {%
3.8875 5.7925973816951
4.1125 5.7925973816951
};
\addplot [thick, black]
table {%
3.8875 9.15831942534157
4.1125 9.15831942534157
};
\addplot [black, mark=o, mark size=3, mark options={solid,fill opacity=0}, only marks]
table {%
4 9.49902282159878
};
\addplot [ultra thick, slategray]
table {%
0.775 1.94063571508539
1.225 1.94063571508539
};
\addplot [ultra thick, slategray]
table {%
1.775 5.1367883749905
2.225 5.1367883749905
};
\addplot [ultra thick, slategray]
table {%
2.775 5.26003941467109
3.225 5.26003941467109
};
\addplot [ultra thick, slategray]
table {%
3.775 7.62660478025917
4.225 7.62660478025917
};
\end{axis}

\end{tikzpicture}}
    \subcaption{eCASG using True Hessian}
    \label{fig:exact_Cancer}
\end{minipage}\hfill
\begin{minipage}[t]{0.32\textwidth}
    \centering
    \resizebox{\textwidth}{!}{
\begin{tikzpicture}

\definecolor{darkgray176}{RGB}{176,176,176}
\definecolor{darkorange}{RGB}{255,140,0}
\definecolor{slategray}{RGB}{112,128,144}

\begin{axis}[
tick align=outside,
tick pos=left,
x grid style={darkgray176},
xlabel={Gradient estimation method},
xmajorgrids,
xmin=0.5, xmax=4.5,
xminorgrids,
xtick style={color=black},
xtick={1,2,3,4},
xticklabels={CD,FD exact H,FD RBF H,RBF},
y grid style={darkgray176},
ylabel={Log\_2 ratio of MSE},
ymajorgrids,
ymin=-1.88048780788092, ymax=6.65016357987935,
yminorgrids,
ytick style={color=black}
]
\addplot [ultra thick, darkorange]
table {%
0.775 -0.826367505959196
1.225 -0.826367505959196
1.225 -0.535661847192979
0.775 -0.535661847192979
0.775 -0.826367505959196
};
\addplot [very thick, slategray]
table {%
1 -0.826367505959196
1 -1.16829031217203
};
\addplot [very thick, slategray]
table {%
1 -0.535661847192979
1 -0.157648942013808
};
\addplot [very thick, black]
table {%
0.8875 -1.16829031217203
1.1125 -1.16829031217203
};
\addplot [very thick, black]
table {%
0.8875 -0.157648942013808
1.1125 -0.157648942013808
};
\addplot [black, mark=o, mark size=3, mark options={solid,fill opacity=0}, only marks]
table {%
1 -1.27375600869195
1 -1.30969347334351
1 -1.36368053777837
1 -1.40508872331307
1 -1.31724564739098
1 -1.49273092661909
1 -1.41214985792445
1 0.0249018303144849
1 0.555294249996948
1 0.42653831421147
1 0.018327272550543
1 -0.0210151926192046
};
\addplot [ultra thick, darkorange]
table {%
1.775 2.14005632913103
2.225 2.14005632913103
2.225 2.6759248481046
1.775 2.6759248481046
1.775 2.14005632913103
};
\addplot [very thick, slategray]
table {%
2 2.14005632913103
2 1.54959556379556
};
\addplot [very thick, slategray]
table {%
2 2.6759248481046
2 3.41114001785647
};
\addplot [very thick, black]
table {%
1.8875 1.54959556379556
2.1125 1.54959556379556
};
\addplot [very thick, black]
table {%
1.8875 3.41114001785647
2.1125 3.41114001785647
};
\addplot [black, mark=o, mark size=3, mark options={solid,fill opacity=0}, only marks]
table {%
2 1.24855081298085
};
\addplot [ultra thick, darkorange]
table {%
2.775 2.3248490801544
3.225 2.3248490801544
3.225 2.8361611757745
2.775 2.8361611757745
2.775 2.3248490801544
};
\addplot [very thick, slategray]
table {%
3 2.3248490801544
3 1.68476575640665
};
\addplot [very thick, slategray]
table {%
3 2.8361611757745
3 3.52938058589831
};
\addplot [very thick, black]
table {%
2.8875 1.68476575640665
3.1125 1.68476575640665
};
\addplot [very thick, black]
table {%
2.8875 3.52938058589831
3.1125 3.52938058589831
};
\addplot [ultra thick, darkorange]
table {%
3.775 4.77787236621759
4.225 4.77787236621759
4.225 5.09335228464449
3.775 5.09335228464449
3.775 4.77787236621759
};
\addplot [very thick, slategray]
table {%
4 4.77787236621759
4 4.36217334964591
};
\addplot [very thick, slategray]
table {%
4 5.09335228464449
4 5.45067952829083
};
\addplot [very thick, black]
table {%
3.8875 4.36217334964591
4.1125 4.36217334964591
};
\addplot [very thick, black]
table {%
3.8875 5.45067952829083
4.1125 5.45067952829083
};
\addplot [black, mark=o, mark size=3, mark options={solid,fill opacity=0}, only marks]
table {%
4 4.23510447232935
4 3.56969454420835
4 3.80253828821119
4 4.2269932013303
4 4.25022280580585
4 4.18413447779293
4 5.71847508868803
4 5.60608469231318
4 5.6283362043218
4 6.26240669861752
4 5.5741063951626
4 6.13683865550711
4 6.20398441897635
};
\addplot [ultra thick, slategray]
table {%
0.775 -0.660645746364098
1.225 -0.660645746364098
};
\addplot [ultra thick, slategray]
table {%
1.775 2.43384405019805
2.225 2.43384405019805
};
\addplot [ultra thick, slategray]
table {%
2.775 2.56036485033628
3.225 2.56036485033628
};
\addplot [ultra thick, slategray]
table {%
3.775 4.94587501110114
4.225 4.94587501110114
};
\end{axis}

\end{tikzpicture}}
    \subcaption{eCASG using RBF Hessian}
    \label{fig:RBF_Cancer}
\end{minipage}\hfill
\begin{minipage}[t]{0.32\textwidth}
    \centering
    \resizebox{\textwidth}{!}{\input{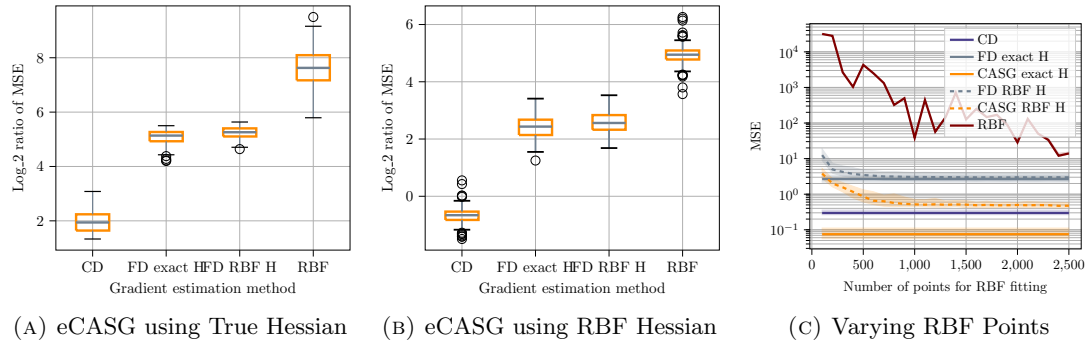}}
    \subcaption{Varying RBF Points}
    \label{fig:Nerrs_Cancer}
\end{minipage}\hfill
\caption{Comparison of gradient estimation methods for sensitivity analysis of cell differentiation in the colon with $\sigma=10^{-3}$.}
\label{fig:Cancer_sens}
\end{figure}

\subsubsection{Sensitivity Analysis: Discussion}

Our method outperforms the forward difference based methods and the gradient estimates by the global model by up to an order of magnitude. More interestingly, our method appears to perform close to central differences. For the ODE problem, eCASG with the exact Hessian even outperforms central differences. 

While outperforming central differences seems like a contradiction, as it supposed to be a best-case comparison, for some problems it may be the case that the layout of the difference vectors is not only good for a second-order approximation, but also a third order approximation. Hence, it may be that the specific sample set geometry also reduces the third-order error and hence improves upon central differences. Understanding how to purposefully distribute the sample set for a central-difference type gradient estimator is left for future work. Overall it is promising that the optimal sample set can achieve a comparable error as central difference with half the number of samples.

We now note the dependence on the number of sample points. While the box-plots are given a global model with $10,000$ and $1,000$ points, this corresponds to roughly $300$ and $90$ gradient estimates respectively. When a good understanding of the sensitivity of the system is needed, these are numbers which can be easily achieved. Furthermore, as seen in Figure \ref{fig:Nerrs_Ackley} far fewer points suffice to obtain a strong gradient estimate.


\subsection{Derivative Free Optimization}
Another application in which gradient estimation is crucial is derivative free optimization (DFO). To compare the gradient estimation methods, we use the bench-marking methodology developed in \cite{more2009benchmarking} and test on a range of CUTEst problems \cite{gould2015cutest}. For optimization we use a L-BFGS DFO framework from \cite{berahas2019derivative} which allows various gradient estimation methods to be plugged in while keeping everything else fixed. 

Besides the methods considered for Sensitivity Analysis, we also consider the Adaptive Finite Difference (AFD) method \cite{shi2022adaptive}. AFD is based on forward differences and computes the lengths of difference vectors adaptively instead of using a global model. We consider AFD to be the state-of-the-art method for using finite differences in DFO as it has been shown to be more accurate and robust than comparable methods \cite{shi2022adaptive}.

The sole hyperparameter $h$ is set to $0.1$, $0.01$, and  $0.001$. While AFD is designed to not need such a parameter, to ensure fairness, in addition to leaving AFD with the default parameters we also assess it when upper-bounding the difference vectors by $h$ as done for FD and CASG. To account for noise, for each hyperparameter we obtain ten random runs. We then set the optimal hyperparameter to be the one given by the random run which achieved the lowest function value. We then have ten random-runs with the chosen hyperparameter. 

To provide some initial global understanding of the function, at initialization we sample $100 \times d$ points randomly in a cube of side-length two, where $d$ is the dimension of the specific problem. During optimization, we save the function evaluations which happen as part of gradient estimation. While other points are evaluated as part of L-BFGS, adding them to the function evaluation history made the global model perform worse. When fitting the global model, we use the minimum number of $100 \times d$ and $1,000$ data-points from the function evaluations along the optimization path. When using the the global model for gradient estimation, to obtain a function evaluation history we artificially construct a FD sample set with difference vector lengths of $0.1$ after each gradient estimation by the global model. We then add these function evaluation to $\mathcal{E}$. Lastly, the smoothing parameter is set to $0.1$.

In Figure \ref{fig:Opt} we plot the data-profiles \cite{more2009benchmarking} of each method. To create the plots, for each CUTEst problem $P$ we let $f^P_L$ be the lowest average function value achieved by any gradient estimation method for problem $P$, with the average being taken over the corresponding random runs. We then let the convergence criterion for the CUTEst problem be $(f(x^P_0) - f^P_L) \geq (1 - \tau)(f(x^P_0) - f(x^P_k)$, with $x^P_0$ being the starting position for the problem, $x^P_k$ the first point which satisfies the condition and $\tau$ being the tolerance. Notice that each random run has a natural convergence criterion associated via the problem it was executed on. The data-profile then plots for each gradient estimation method the fraction of random runs which achieved their corresponding convergence criterion within a certain number of function evaluations. We use the unit of simplex gradients, which is simply the number of total function evaluations divided by the dimension of the problem.

\begin{figure}[h!]
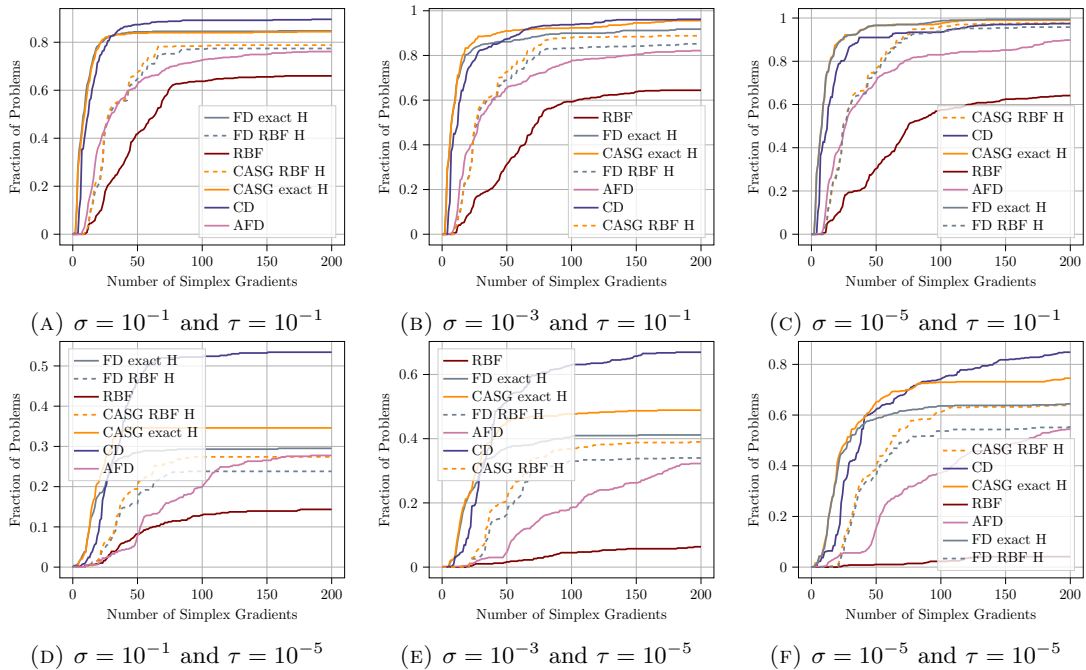

\begin{minipage}[t]{0.32\textwidth}
    \centering
    \resizebox{\textwidth}{!}{\input{figs/Optimization/Opt_sig_0.1_tau_0.1}}
    \subcaption{$\sigma=10^{-1}$ and $\tau=10^{-1}$}
    \label{fig:opt_sig_0.1_tau_0.1}
\end{minipage}\hfill
\begin{minipage}[t]{0.32\textwidth}
    \centering
    \resizebox{\textwidth}{!}{\input{figs/Optimization/Opt_sig_0.001_tau_0.1}}
    \subcaption{$\sigma=10^{-3}$ and $\tau=10^{-1}$}
    \label{fig:opt_sig_0.001_tau_0.1}
\end{minipage}\hfill
\begin{minipage}[t]{0.32\textwidth}
    \centering
    \resizebox{\textwidth}{!}{\input{figs/Optimization/Opt_sig_1e-05_tau_0.1}}
    \subcaption{$\sigma=10^{-5}$ and $\tau=10^{-1}$}
    \label{fig:opt_sig_1e-05_tau_0.1}
\end{minipage}\hfill

\begin{minipage}[t]{0.32\textwidth}
    \centering
    \resizebox{\textwidth}{!}{\input{figs/Optimization/Opt_sig_0.1_tau_1e-05}}
    \subcaption{$\sigma=10^{-1}$ and $\tau=10^{-5}$}
    \label{fig:opt_sig_0.1_tau_1e-05}
\end{minipage}\hfill
\begin{minipage}[t]{0.32\textwidth}
    \centering
    \resizebox{\textwidth}{!}{\input{figs/Optimization/Opt_sig_0.001_tau_1e-05}}
    \subcaption{$\sigma=10^{-3}$ and $\tau=10^{-5}$}
    \label{fig:opt_sig_0.001_tau_1e-05}
\end{minipage}\hfill
\begin{minipage}[t]{0.32\textwidth}
    \centering
    \resizebox{\textwidth}{!}{\input{figs/Optimization/Opt_sig_1e-05_tau_1e-05}}
    \subcaption{$\sigma=10^{-5}$ and $\tau=10^{-5}$}
    \label{fig:opt_sig_1e-05_tau_1e-05}
\end{minipage}\hfill

\caption{Data profiles for L-BFGS using various gradient estimation methods with noise levels $10^{-1}, 10^{-3}$ and $ 10^{-5}$. The top row are the profiles with $\tau=10^{-1}$, associated with approximate convergence. The bottom row is $\tau=10^{-5}$, under which the convergence criterion is only satisfied if a random run is able to get very close to the lowest achievable function value.}
\label{fig:Opt}
\end{figure}


In Figure \ref{fig:Opt} the lines for methods which use the global methods are shifted by ten simplex gradient evaluations due to the initialization data. In some cases the initialization data may not be considered as part of the expense of optimization as the function may have been evaluated at various locations already. In which case the graph would be shifted over by ten simplex gradient evaluations, improving our method in terms of budget required. 

\subsubsection{DFO: Discussion.}
In Figure \ref{fig:Opt} we observe that when $\tau=10^{-1}$, many of the methods perform similarly, as the convergence criterion is relaxed. So, if an approximate solution is required then CASG and FD may suffice. Especially when the initialization data for the global model is not included, the methods using the global model may be favorable, as fewer evaluations than CD may be necessary. 

However, when $\tau=10^{-5}$ more accurate methods perform significantly better. When the true Hessian is used, at first both CASG and FD perform better, since per gradient estimate fewer evaluations are needed. However, there comes a point where the lines intersect with CD, as then the methods can not match the accuracy of CD.

We also observe that CASG with the global model outperforms all other forward difference based models. Specifically, when higher accuracy is needed the gap between the two methods is significant. Additionally, even standard forward differences using our framework tends to outperform AFD, implying that our framework which uses a global model may be more suited for DFO. 

We do note that AFD tends to continue improving after 200 steps. As the other methods have already long obtained convergence, we decided on a comprise to show the initial variation while showing the approximate convergence of AFD. However, AFD does not outperform CASG even when more evaluations are used and tends to eventually become similar to FD using the global model framework. 

Overall, CASG using the true Hessian far outperforms all other gradient estimation methods except for CD once enough function evaluations have been used. In practice only CASG with the global model framework can be used, which also outperforms all comparable methods. Importantly, CASG with the global model performs comparable to FD using the true Hessian. Hence, even if a better finite difference based method is developed, CASG with our framework should provide an upper-bound in performance.

\section{Conclusion}

We studied the optimal sample set for the simplex gradient and introduced a framework which makes our method useful in practice. We showed that the improvement in CASG comes from aligning the sample set along the directions with lowest curvature. We numerically found that CASG consistently outperforms optimal forward differences and hence will improve on any forward difference based methods in practice. Additionally, we found CASG to often be closer in performance to central differences than forward differences, even though central differences needs twice as many function evaluations. 

The implications are that in many applications in which function evaluations are costly, CASG may be used as an immediate improvement to forward difference type schemes, or drop-in for central difference methods, which would otherwise require twice the number of function evaluations. In settings where each function evaluation requires an expensive simulation or completion of a process, reducing the number of evaluations required by a factor of two will be substantial. Overall, CASG and eCASG, in conjunction with our global model framework, are highly competitive gradient estimators which may be used as a replacement for many numerical differentiation methods in real-world applications. 


\bibliographystyle{plain} 
\bibliography{ref}

\begin{thebibliography}{10}

\bibitem{ahnert2007numerical}
Karsten Ahnert and Markus Abel.
\newblock Numerical differentiation of experimental data: local versus global methods.
\newblock {\em Computer Physics Communications}, 177(10):764--774, 2007.

\bibitem{barton1992computing}
Russell~R Barton.
\newblock Computing forward difference derivatives in engineering optimization.
\newblock {\em Engineering optimization}, 20(3):205--224, 1992.

\bibitem{baydin2018automatic}
Atilim~Gunes Baydin, Barak~A Pearlmutter, Alexey~Andreyevich Radul, and Jeffrey~Mark Siskind.
\newblock Automatic differentiation in machine learning: a survey.
\newblock {\em Journal of Marchine Learning Research}, 18:1--43, 2018.

\bibitem{berahas2019derivative}
Albert~S Berahas, Richard~H Byrd, and Jorge Nocedal.
\newblock Derivative-free optimization of noisy functions via quasi-newton methods.
\newblock {\em SIAM Journal on Optimization}, 29(2):965--993, 2019.

\bibitem{berahas2019linear}
Albert~S Berahas, Liyuan Cao, Krzysztof Choromanski, and Katya Scheinberg.
\newblock Linear interpolation gives better gradients than gaussian smoothing in derivative-free optimization.
\newblock {\em arXiv preprint arXiv:1905.13043}, 2019.

\bibitem{berahas2022theoretical}
Albert~S Berahas, Liyuan Cao, Krzysztof Choromanski, and Katya Scheinberg.
\newblock A theoretical and empirical comparison of gradient approximations in derivative-free optimization.
\newblock {\em Foundations of Computational Mathematics}, 22(2):507--560, 2022.

\bibitem{s2013stochastic}
S.. Bhatnagar, HL~Prasad, and LA~Prashanth.
\newblock {\em Stochastic Recursive Algorithms for Optimization: Simultaneous Perturbation Methods}.
\newblock Springer, 2013.

\bibitem{bhatt2019policy}
Sujay Bhatt, Alec Koppel, and Vikram Krishnamurthy.
\newblock Policy gradient using weak derivatives for reinforcement learning.
\newblock pages 5531--5537. IEEE, 2019 IEEE 58th Conference on Decision and Control (CDC), 2019.

\bibitem{blakney2019comparison}
Adam Blakney and Jingyi Zhu.
\newblock A comparison of the finite difference and simultaneous perturbation gradient estimation methods with noisy function evaluations.
\newblock pages 1--6. IEEE, 2019 53rd Annual Conference on Information Sciences and Systems (CISS), 2019.

\bibitem{borkar2017gradient}
Vivek~S Borkar, Vikranth~R Dwaracherla, and Neeraja Sahasrabudhe.
\newblock Gradient estimation with simultaneous perturbation and compressive sensing.
\newblock {\em The Journal of Machine Learning Research}, 18(1):5910--5936, 2017.

\bibitem{brekelmans2008gradient}
RCM Brekelmans, LT~Driessen, HJM Hamers, and Dick den Hertog.
\newblock Gradient estimation using lagrange interpolation polynomials.
\newblock {\em Journal of optimization theory and applications}, 136(3):341--357, 2008.

\bibitem{brekelmans2005gradient}
RCM Brekelmans, LT~Driessen, HJM Hamers, and D~den Hertog.
\newblock Gradient estimation schemes for noisy functions.
\newblock {\em Journal of Optimization Theory and Applications}, 126(3):529--551, 2005.

\bibitem{bruno2012numerical}
O~Bruno and D~Hoch.
\newblock Numerical differentiation of approximated functions with limited order-of-accuracy deterioration.
\newblock {\em SIAM Journal on Numerical Analysis}, 50(3):1581--1603, 2012.

\bibitem{burden2015numerical}
Richard~L Burden, J~Douglas Faires, and Annette~M Burden.
\newblock {\em Numerical analysis}.
\newblock Cengage learning, 2015.

\bibitem{cheney2009course}
E.W. Cheney and W.A. Light.
\newblock {\em A Course in Approximation Theory}.
\newblock Graduate studies in mathematics. American Mathematical Society, 2009.

\bibitem{conn2008geometry}
Andrew~R Conn, Katya Scheinberg, and Lu{\'\i}s~N Vicente.
\newblock Geometry of interpolation sets in derivative free optimization.
\newblock {\em Mathematical programming}, 111:141--172, 2008.

\bibitem{conn2008geometry2}
Andrew~R Conn, Katya Scheinberg, and Luis~N Vicente.
\newblock Geometry of sample sets in derivative-free optimization: polynomial regression and underdetermined interpolation.
\newblock {\em IMA journal of numerical analysis}, 28(4):721--748, 2008.

\bibitem{coope2019efficient}
Ian Coope and Rachael Tappenden.
\newblock Efficient calculation of regular simplex gradients.
\newblock {\em Computational Optimization and Applications}, 72:561--588, 2019.

\bibitem{cullum1971numerical}
Jane Cullum.
\newblock Numerical differentiation and regularization.
\newblock {\em SIAM Journal on numerical analysis}, 8(2):254--265, 1971.

\bibitem{custodio2008using}
AL~Cust{\'o}dio, JE~Dennis, and Lu{\'\i}s~Nunes Vicente.
\newblock Using simplex gradients of nonsmooth functions in direct search methods.
\newblock {\em IMA journal of numerical analysis}, 28(4):770--784, 2008.

\bibitem{custodio2007using}
Ana~Lu{\'\i}sa Cust{\'o}dio and Lu{\'\i}s~Nunes Vicente.
\newblock Using sampling and simplex derivatives in pattern search methods.
\newblock {\em SIAM Journal on Optimization}, 18(2):537--555, 2007.

\bibitem{dahlquist2003numerical}
Germund Dahlquist and {\AA}ke Bj{\"o}rck.
\newblock {\em Numerical methods}.
\newblock Courier Corporation, 2003.

\bibitem{fisher1936design}
Ronald~Aylmer Fisher.
\newblock Design of experiments.
\newblock {\em British Medical Journal}, 1(3923):554, 1936.

\bibitem{fornberg1988generation}
Bengt Fornberg.
\newblock Generation of finite difference formulas on arbitrarily spaced grids.
\newblock {\em Mathematics of computation}, 51(184):699--706, 1988.

\bibitem{gill1983intervals}
Philip~E. Gill, Walter Murray, Michael~A. Saunders, and Margaret~H. Wright.
\newblock Computing forward-difference intervals for numerical optimization.
\newblock {\em SIAM Journal on Scientific and Statistical Computing}, 4(2):310--321, 1983.

\bibitem{gill2019practical}
Philip~E Gill, Walter Murray, and Margaret~H Wright.
\newblock {\em Practical optimization}.
\newblock SIAM, 2019.

\bibitem{gould2015cutest}
Nicholas~IM Gould, Dominique Orban, and Philippe~L Toint.
\newblock Cutest: a constrained and unconstrained testing environment with safe threads for mathematical optimization.
\newblock {\em Computational optimization and applications}, 60:545--557, 2015.

\bibitem{gourieroux2000sensitivity}
Christian Gourieroux, Jean-Paul Laurent, and Olivier Scaillet.
\newblock Sensitivity analysis of values at risk.
\newblock {\em Journal of empirical finance}, 7(3-4):225--245, 2000.

\bibitem{griewank1989automatic}
Andreas Griewank et~al.
\newblock On automatic differentiation.
\newblock {\em Mathematical Programming: recent developments and applications}, 6(6):83--107, 1989.

\bibitem{hanke2001inverse}
Martin Hanke and Otmar Scherzer.
\newblock Inverse problems light: numerical differentiation.
\newblock {\em The American Mathematical Monthly}, 108(6):512--521, 2001.

\bibitem{hare2020calculus}
Warren Hare and Gabriel Jarry-Bolduc.
\newblock Calculus identities for generalized simplex gradients: Rules and applications.
\newblock {\em SIAM Journal on Optimization}, 30(1):853--884, 2020.

\bibitem{hare2022error}
Warren Hare, Gabriel Jarry-Bolduc, and Chayne Planiden.
\newblock Error bounds for overdetermined and underdetermined generalized centred simplex gradients.
\newblock {\em IMA Journal of Numerical Analysis}, 42(1):744--770, 2022.

\bibitem{johnston2007examples}
Matthew~D Johnston, Carina~M Edwards, Walter~F Bodmer, Philip~K Maini, and S~Jonathan Chapman.
\newblock Examples of mathematical modeling: tales from the crypt.
\newblock {\em Cell cycle}, 6(17):2106--2112, 2007.

\bibitem{kendrick2005stochastic}
David~A Kendrick.
\newblock Stochastic control for economic models: past, present and the paths ahead.
\newblock {\em Journal of economic dynamics and control}, 29(1-2):170--171, 2005.

\bibitem{knowles2014methods}
Ian Knowles and Robert~J Renka.
\newblock Methods for numerical differentiation of noisy data.
\newblock {\em Electron. J. Differ. Equ}, 21:235--246, 2014.

\bibitem{montgomery2017design}
Douglas~C Montgomery.
\newblock {\em Design and analysis of experiments}.
\newblock John wiley \& sons, 2017.

\bibitem{more2009benchmarking}
Jorge~J Mor{\'e} and Stefan~M Wild.
\newblock Benchmarking derivative-free optimization algorithms.
\newblock {\em SIAM Journal on Optimization}, 20(1):172--191, 2009.

\bibitem{more2012estimating}
Jorge~J Mor{\'e} and Stefan~M Wild.
\newblock Estimating derivatives of noisy simulations.
\newblock {\em ACM Transactions on Mathematical Software (TOMS)}, 38(3):1--21, 2012.

\bibitem{plackett1946design}
Robin~L Plackett and J~Peter Burman.
\newblock The design of optimum multifactorial experiments.
\newblock {\em Biometrika}, 33(4):305--325, 1946.

\bibitem{qian2020sensitivity}
George Qian and Adam Mahdi.
\newblock Sensitivity analysis methods in the biomedical sciences.
\newblock {\em Mathematical biosciences}, 323:108306, 2020.

\bibitem{regis2015calculus}
Rommel~G Regis.
\newblock The calculus of simplex gradients.
\newblock {\em Optimization Letters}, 9(5):845--865, 2015.

\bibitem{scheinberg2022finite}
Katya Scheinberg.
\newblock Finite difference gradient approximation: To randomize or not?
\newblock {\em INFORMS Journal on Computing}, 2022.

\bibitem{schulman2015gradient}
John Schulman, Nicolas Heess, Theophane Weber, and Pieter Abbeel.
\newblock Gradient estimation using stochastic computation graphs.
\newblock {\em Advances in Neural Information Processing Systems}, 28, 2015.

\bibitem{shi2022adaptive}
Hao-Jun~Michael Shi, Yuchen Xie, Melody~Qiming Xuan, and Jorge Nocedal.
\newblock Adaptive finite-difference interval estimation for noisy derivative-free optimization.
\newblock {\em SIAM Journal on Scientific Computing}, 44(4):A2302--A2321, 2022.

\bibitem{spall1998overview}
James~C Spall.
\newblock An overview of the simultaneous perturbation method for efficient optimization.
\newblock {\em Johns Hopkins apl technical digest}, 19(4):482--492, 1998.

\bibitem{spall1992multivariate}
James~C Spall et~al.
\newblock Multivariate stochastic approximation using a simultaneous perturbation gradient approximation.
\newblock {\em IEEE transactions on automatic control}, 37(3):332--341, 1992.

\bibitem{suri1987infinitesimal}
Rajan Suri.
\newblock Infinitesimal perturbation analysis for general discrete event systems.
\newblock {\em Journal of the ACM (JACM)}, 34(3):686--717, 1987.

\bibitem{van2020numerical}
Floris Van~Breugel, J~Nathan Kutz, and Bingni~W Brunton.
\newblock Numerical differentiation of noisy data: A unifying multi-objective optimization framework.
\newblock {\em IEEE Access}, 8:196865--196877, 2020.

\bibitem{2020SciPy-NMeth}
Pauli Virtanen, Ralf Gommers, Travis~E. Oliphant, Matt Haberland, Tyler Reddy, David Cournapeau, Evgeni Burovski, Pearu Peterson, Warren Weckesser, Jonathan Bright, St{\'e}fan~J. {van der Walt}, Matthew Brett, Joshua Wilson, K.~Jarrod Millman, Nikolay Mayorov, Andrew R.~J. Nelson, Eric Jones, Robert Kern, Eric Larson, C~J Carey, {\.I}lhan Polat, Yu~Feng, Eric~W. Moore, Jake {VanderPlas}, Denis Laxalde, Josef Perktold, Robert Cimrman, Ian Henriksen, E.~A. Quintero, Charles~R. Harris, Anne~M. Archibald, Ant{\^o}nio~H. Ribeiro, Fabian Pedregosa, Paul {van Mulbregt}, and {SciPy 1.0 Contributors}.
\newblock {{SciPy} 1.0: Fundamental Algorithms for Scientific Computing in Python}.
\newblock {\em Nature Methods}, 17:261--272, 2020.

\bibitem{wardi2018perturbation}
Yorai Wardi, Christos~G Cassandras, and Xi-Ren Cao.
\newblock Perturbation analysis: a framework for data-driven control and optimization of discrete event and hybrid systems.
\newblock {\em Annual Reviews in Control}, 45:267--280, 2018.

\bibitem{wei2007numerical}
Ting Wei and YC~Hon.
\newblock Numerical differentiation by radial basis functions approximation.
\newblock {\em Advances in Computational Mathematics}, 27(3):247--272, 2007.

\bibitem{williams1992simple}
Ronald~J Williams.
\newblock Simple statistical gradient-following algorithms for connectionist reinforcement learning.
\newblock {\em Machine learning}, 8(3):229--256, 1992.

\bibitem{zhang1991design}
JC~Zhang and MA~Styblinski.
\newblock Design of experiments approach to gradient estimation and its application to cmos circuit stochastic optimization.
\newblock pages 3098--3101. IEEE, 1991., IEEE International Sympoisum on Circuits and Systems, 1991.

\end{thebibliography}

\clearpage

\appendix
\section{Appendix: Proofs}\label{app:proofs}

\subsection{Taylor Bound}\label{app:taylor_bound}
\begin{proof}[Proof of Proposition \ref{lma:usg_error}]
By Taylor's Theorem for a fixed $x_0$ and bounded $x_i$ (which we have due to the assumption $\Vert S \Vert_2 \leq h$) we have
\begin{align*}
    f(x_i) - f(x_0) &= (\nabla f(x_0))^T s_i + \frac{1}{2} s_i^T \nabla^2 f(x_0) s_i + O(\Vert S \Vert_2^3) \\
    \rightarrow \delta \mathbf{\tilde{f}} &= S^T \nabla f(x_0) + \frac{1}{2} \text{diag}(S^T \nabla^2 f(x_0) S) + \mathbf{O}(\Vert S \Vert_2^3) \\
    \rightarrow \nabla_S f(x_0) = S^{-T} \delta \mathbf{\tilde{f}} &=  \nabla f(x_0) + \frac{1}{2} S^{-T} \text{diag}(S^T \nabla^2 f(x_0) S) + \mathbf{O}(\Vert S \Vert_2^2).
\end{align*}

\begin{align*}
    \Vert \nabla_S f(x_0)  - \nabla f(x_0) \Vert_2^2 &= \Vert S^{-T} \delta \mathbf{\tilde{f}} - \nabla f(x_0) \Vert_2^2 \\
    &= \Vert \frac{1}{2} S^{-T} \text{diag}(S^T \nabla^2 f(x_0) S) + \mathbf{O}(\Vert S \Vert_2^2) \Vert_2^2 \\
    &= \frac{1}{4} \Vert  S^{-T} \text{diag}(S^T \nabla^2 f(x_0) S) \Vert_2^2 + O(\Vert S \Vert_2^3). 
\end{align*}
\end{proof}

\subsection{Poofs for Theorem \ref{thm:SVD_Opt}}\label{app:prop_main_thm_proof}
\subsubsection{Proof of Proposition \ref{prop:obj_lower_bound}}
We first present the following results. 
\begin{lemma}\label{lma:lower_bound_opt}
Given an invertible matrix $S$ with singular value decomposition $S = U \Sigma V^T$, diagonal matrix $D$ and real number $\sigma$ we have 
\begin{align*}
    &\frac{1}{4} \Vert S^{-T} [s_i^T D s_i]^T_{1 \leq i \leq d} \Vert_2^2 
    + \sigma^2 \Vert S^{-1} \Vert_F^2 + \sigma^2 \Vert S^{-T} \mathbf{1} \Vert_2^2 \\
    &\geq \frac{1}{4} \frac{1}{\Sigma_{max}^2} \Vert [s_1^T D s_1, \dots, s_d^T D s_d]^T \Vert_2^2 + \sigma^2 \sum_{i = 1}^d \frac{1}{\Sigma_i^2} + \sigma^2 \frac{d}{\Sigma_{max}^2}. 
\end{align*}
The lower bound is achieved if both $[s_1^T D s_1, \dots, s_d^T D s_d]^T$ and $\mathbf{1}$ are right singular vectors, i.e. columns of $V$, associated with the singular value $\Sigma_{max}$.
\end{lemma}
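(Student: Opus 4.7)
The plan is to exploit the SVD $S = U\Sigma V^T$ to rewrite each of the three terms in the left-hand side in a form that makes the inequality transparent. Since orthogonal transformations preserve the Frobenius norm and $S^{-1} = V\Sigma^{-1}U^T$, the noise term $\sigma^2\Vert S^{-1}\Vert_F^2$ equals $\sigma^2\sum_{i=1}^d 1/\Sigma_i^2$ exactly, matching the corresponding term on the right-hand side. So no slack is introduced there; all the bounding happens in the other two terms.

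The main observation is the following general inequality. For any vector $v \in \mathbb{R}^d$, writing $S^{-T} = U\Sigma^{-1}V^T$,
\begin{align*}
\Vert S^{-T} v \Vert_2^2 = \Vert \Sigma^{-1} V^T v \Vert_2^2 = \sum_{i=1}^d \frac{(V^T v)_i^2}{\Sigma_i^2} \;\geq\; \frac{1}{\Sigma_{\max}^2} \sum_{i=1}^d (V^T v)_i^2 = \frac{\Vert v \Vert_2^2}{\Sigma_{\max}^2},
\end{align*}
using $1/\Sigma_i^2 \geq 1/\Sigma_{\max}^2$ for each $i$ and the fact that $V$ is orthogonal. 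Apply this with $v = [s_1^T D s_1, \dots, s_d^T D s_d]^T$ for the approximation-error term, and with $v = \mathbf{1}$ (noting $\Vert \mathbf{1}\Vert_2^2 = d$) for the second noise term. Summing the three contributions yields exactly the claimed lower bound.

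For the equality condition, the inequality above is tight precisely when $V^T v$ is supported on the coordinates corresponding to $\Sigma_{\max}$, i.e., when $v$ lies in the span of the right singular vectors (columns of $V$) associated with the singular value $\Sigma_{\max}$. Requiring this simultaneously for both $[s_i^T D s_i]_i^T$ and $\mathbf{1}$ gives the stated sufficient condition for equality.

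There is no real obstacle here; the result is essentially an application of the Rayleigh-type bound $\Vert S^{-T}v\Vert_2^2 \geq \Vert v\Vert_2^2/\sigma_{\max}(S)^2$ term by term. The only small point to be careful about is that the approximation-error term is not a pure $\Vert S^{-T}\Vert$-type quantity but involves the specific vector of quadratic forms $s_i^T D s_i$, which is why the bound naturally picks up $\Vert[s_i^T D s_i]_i\Vert_2^2/\Sigma_{\max}^2$ rather than something cleaner. This is also what makes the equality condition nontrivial and motivates the later choice of $V$ as a Hadamard matrix in Theorem~\ref{thm:SVD_Opt}, so that both $\mathbf{1}$ and (after the optimal $U$ and $\Sigma$ are fixed) the vector of quadratic forms can be aligned with a single column of $V$.
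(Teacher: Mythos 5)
Your proof is correct and follows essentially the same route as the paper: the Frobenius term is computed exactly as $\sum_i 1/\Sigma_i^2$, and the other two terms are bounded via $\Vert S^{-T}v\Vert_2^2 \geq \Vert v\Vert_2^2/\Sigma_{max}^2$, which is precisely the paper's appeal to $\Vert Av\Vert_2 \geq \sigma_{min}(A)\Vert v\Vert_2$ with $\sigma_{min}(S^{-T}) = 1/\Sigma_{max}$. You merely make the SVD computation and the equality condition explicit, which is a fine (slightly more detailed) presentation of the identical argument.
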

\begin{proof}
    This is immediate by noticing that for any vector $v$ and matrix $A$ we have $\Vert A v \Vert_2 \geq \sigma_{min} \Vert v \Vert_2$, and that the Frobenius norm is the sum of the squares of singular values. 
    Clearly, $\Vert A v \Vert_2 = \sigma_{min} \Vert v \Vert_2$ when $v$ is the right singular vector associated with the smallest singular value of $A$. 
\end{proof}

\begin{lemma}\label{lma:min_norm_sum}
    Let $c \in \mathbb{R}_+$, then the solution $w^*$ to 
    \begin{align*}
        \min_{w \in \mathbb{R}^d} &\quad \Vert w \Vert_2^2 \\
         s.t. &\quad \sum_{i = 1}^d w_i = c
    \end{align*}
    is given by $w^*_i = \frac{c}{d}$ for $1 \leq i \leq d$.
\end{lemma}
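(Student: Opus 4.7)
The plan is to use the Cauchy--Schwarz inequality, which gives both the lower bound on $\|w\|_2^2$ and identifies the unique minimizer in one step, avoiding the need to verify second-order conditions.

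Specifically, I would apply Cauchy--Schwarz to the vectors $w$ and $\mathbf{1} \in \mathbb{R}^d$ to obtain
\begin{equation*}
c^2 = \Bigl(\sum_{i=1}^d w_i\Bigr)^2 = (w^T \mathbf{1})^2 \leq \|w\|_2^2 \cdot \|\mathbf{1}\|_2^2 = d \|w\|_2^2,
\end{equation*}
so that $\|w\|_2^2 \geq c^2/d$ for every feasible $w$. The equality case of Cauchy--Schwarz requires $w$ to be parallel to $\mathbf{1}$, i.e. $w = \alpha \mathbf{1}$ for some scalar $\alpha$. Combining this with the linear constraint $\sum_i w_i = c$ forces $\alpha = c/d$, so the unique minimizer is $w^*_i = c/d$, which attains the lower bound $c^2/d$.

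An equally short alternative is the Lagrangian route: since the objective is strictly convex and the constraint is affine, the KKT conditions are necessary and sufficient for global optimality. The stationarity equation $2w_i - \lambda = 0$ forces all components equal, and feasibility pins down the common value as $c/d$. Either argument fits in a few lines, and there is no real obstacle; the only thing to note is that the lemma statement restricts to $c \in \mathbb{R}_+$, but the argument does not actually use positivity of $c$, so no care is needed there.
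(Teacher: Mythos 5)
Your proof is correct. The paper disposes of this lemma with a one-line appeal to Lagrange multipliers, which matches your second, alternative argument; your primary route via Cauchy--Schwarz is genuinely different and arguably tighter as a piece of writing. Applying $(w^T\mathbf{1})^2 \le \Vert w\Vert_2^2\,\Vert\mathbf{1}\Vert_2^2$ yields the lower bound $\Vert w\Vert_2^2 \ge c^2/d$ and, through the equality case, the uniqueness of the minimizer $w^* = (c/d)\mathbf{1}$ in a single step, with no need to invoke convexity or check that the stationary point of the Lagrangian is in fact a minimum. The Lagrangian route is the one the paper gestures at and is the more standard reflex for constrained quadratic problems, but it implicitly relies on strict convexity of the objective (or a second-order check) to upgrade stationarity to global optimality, a point your write-up correctly flags. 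Your observation that positivity of $c$ is never used is also accurate; the lemma holds for any $c \in \mathbb{R}$.
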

\begin{proof}
    The proof is a simple application of Lagrange multipliers.
\end{proof}

\begin{lemma}\label{lma:VLevelSet}
Let $V$ be an orthogonal matrix and $D$ a diagonal matrix. Then for any orthogonal matrix $U$ and diagonal matrix $\Sigma$ we have
\begin{align*}
    \Vert [v_i^T \Sigma U^T D U \Sigma v_i]^T_{1 \leq i \leq d} \Vert_2^2 \geq \frac{1}{d} \text{tr}(\Sigma U^T D U \Sigma)^2.
\end{align*}
\end{lemma}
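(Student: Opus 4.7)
Set $M := \Sigma U^T D U \Sigma$ and let $w \in \mathbb{R}^d$ denote the vector whose $i$th entry is $w_i := v_i^T M v_i$. The plan is to recognize $w$ as the vector of diagonal entries of $V^T M V$, exploit the orthogonality of $V$ to pin down the sum $\sum_i w_i$, and then apply a one-line quadratic minimization.

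First, I would observe that since the $v_i$ are the columns of the orthogonal matrix $V$, the entries $w_i$ are precisely the diagonal entries of the conjugated matrix $V^T M V$. Using the cyclic property of trace together with $V^T V = I$, one immediately gets
\begin{equation*}
\sum_{i=1}^d w_i \;=\; \mathrm{tr}(V^T M V) \;=\; \mathrm{tr}(M) \;=\; \mathrm{tr}(\Sigma U^T D U \Sigma).
\end{equation*}
Thus $w$ is constrained to lie on the affine hyperplane $\{w \in \mathbb{R}^d : \sum_i w_i = c\}$ with $c = \mathrm{tr}(\Sigma U^T D U \Sigma)$.

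The second step is to invoke Lemma \ref{lma:min_norm_sum} (or equivalently Cauchy--Schwarz in the form $(\sum_i w_i)^2 \leq d \sum_i w_i^2$) to conclude
\begin{equation*}
\Vert w \Vert_2^2 \;\geq\; \frac{c^2}{d} \;=\; \frac{1}{d}\,\mathrm{tr}(\Sigma U^T D U \Sigma)^2,
\end{equation*}
which is exactly the claimed bound. The inequality is tight when $w_i = c/d$ for all $i$, i.e.\ when the diagonal of $V^T M V$ is constant.

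There is no real obstacle here: the argument is essentially a two-line reduction, with the only subtlety being the observation that conjugation by an orthogonal matrix preserves the trace, so the linear constraint on $w$ is independent of $V$. The same bookkeeping will presumably be the mechanism by which Proposition \ref{prop:obj_lower_bound} produces the term $\tfrac{a_{D,\sigma}(U,\Sigma)^2}{4 d \Sigma_{max}^2}$ after combining this lemma with Lemma \ref{lma:lower_bound_opt}.
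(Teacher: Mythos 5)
Your proposal is correct and follows essentially the same route as the paper: identify $\sum_i v_i^T M v_i$ with $\mathrm{tr}(M)$ via the cyclic property and orthogonality of $V$, then apply the constrained norm-minimization of Lemma \ref{lma:min_norm_sum} (equivalently Cauchy--Schwarz). Your aside that the Cauchy--Schwarz form covers the case of a non-positive trace is a small but welcome improvement, since Lemma \ref{lma:min_norm_sum} as stated assumes $c \in \mathbb{R}_+$.
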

\begin{proof}
Due to the cyclic property of the trace we have
\begin{align*}
    \sum_{i = 1}^d v_i^T \Sigma U^T D U \Sigma v_i &= \text{tr}(V \Sigma U^T D U \Sigma V^T) \\
    &= \text{tr}(\Sigma U^T D U \Sigma).
\end{align*}
By Lemma \ref{lma:min_norm_sum} the vector $w^* \in \mathbb{R}^d$ which minimizes $\Vert w \Vert_2^2$ under the constraint $\sum_{i = 1}^d w_i =  \text{tr}(\Sigma U^T D U \Sigma)$ is given by $w^*_i = \frac{1}{d}\text{tr}(\Sigma U^T D U \Sigma)$. 
\end{proof}

\begin{proof}[Proof of Proposition \ref{prop:obj_lower_bound}]
Consider the case when $h I \succeq \Sigma \succ 0$. Otherwise, the functions both evaluate to $\infty$ and the bound trivially holds on the extended real number line. 
Letting $s_i^T D s_i = v_i^T \Sigma U^T D U \Sigma v_i$ and combining Lemma \ref{lma:lower_bound_opt} and \ref{lma:VLevelSet} we have that
\begin{align*}
    l(U, \Sigma, V) &\geq \frac{1}{4} \frac{1}{\Sigma_{max}^2} \Vert [s_1^T D s_1, \dots, s_d^T D s_d]^T \Vert_2^2 + \sigma^2 \sum_{i = 1}^d \frac{1}{\Sigma_i^2} + \sigma^2 \frac{1}{\Sigma_{max}^2} \Vert \mathbf{1} \Vert_2^2 \\
    &\geq \frac{1}{4 d \Sigma_{max}^2} \text{tr}(\Sigma U^T D U \Sigma)^2 + \sigma^2 \sum_{i = 1}^d \frac{1}{\Sigma_i^2} + \sigma^2 \frac{d}{\Sigma_{max}^2}
\end{align*}
which by taking the minimum over $V$ on both sides completes the proof.
\end{proof}

\subsubsection{Proof of Proposition \ref{prop:Uopt}}

We first consider the form of $U'$ for any given $\Sigma$. Since only $a(U, \Sigma)$ depends on $U$, it suffices to focus on this term.  
\begin{lemma}\label{lma:U_ext}
Fix a diagonal matrix $\Sigma$ and we have that
\begin{itemize}
    \item the extrema of $a(U, \Sigma)$ are given by the set of permutation matrices and,
    \item the extrema of $a(U, \Sigma)^2$ are given by the set of permutation matrices and all matrices $U_0$ such that $a(U_0, \Sigma) = 0$. 
\end{itemize}
\end{lemma}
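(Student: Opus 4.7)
The plan is to analyse $a(U,\Sigma) = \text{tr}(\Sigma U^T D U \Sigma) = \text{tr}(\Sigma^2 U^T D U)$ as a smooth function on the orthogonal group $O(d)$ and characterise its critical points, since on the compact manifold $O(d)$ the extrema of $a$ and of $a^2$ must lie among the critical points. The key calculation is the directional derivative along a tangent $UA$ with $A^T = -A$: using the curve $U e^{tA}$ and cyclic invariance of the trace,
\[
\left.\tfrac{d}{dt}\,a(U e^{tA},\Sigma)\right|_{t=0} \;=\; \text{tr}\bigl(A\,[\Sigma^2,U^T D U]\bigr),
\]
where $[X,Y] = XY - YX$. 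For this to vanish for every antisymmetric $A$ the bracket $[\Sigma^2, U^T D U]$ must be symmetric; but as a commutator of symmetric matrices it is automatically antisymmetric, so it must equal zero.

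Entrywise, $[\Sigma^2, U^T D U]_{ij} = (\Sigma_i^2 - \Sigma_j^2)(U^T D U)_{ij}$, so $U^T D U$ is block-diagonal with respect to the eigenspaces of $\Sigma^2$. In the generic case where the diagonal entries of $\Sigma^2$ are distinct this forces $U^T D U$ to be diagonal, and since $D$ is diagonal as well the only orthogonal $U$ achieving this are the signed permutation matrices. Because $a(U,\Sigma) = \sum_{i,k} d_i\,\Sigma_k^2\,U_{ik}^2$ depends on $U$ only through the entrywise squares $U_{ik}^2$, signs are irrelevant and signed permutations are identified with ordinary permutation matrices, giving the first assertion. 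For the second assertion I would differentiate $a^2$ and observe that its Riemannian gradient on $O(d)$ is $2a\,(\text{grad }a)$, so a point is critical for $a^2$ iff $a = 0$ there or it is critical for $a$; together with the first part this yields the claimed union of the zero set and the permutation matrices.

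The main obstacle is the degenerate case when $\Sigma^2$ or $D$ has repeated diagonal entries: the commutator condition then only determines $U^T D U$ up to orthogonal conjugation within each degenerate block. I would resolve this by noting that $a(U,\Sigma) = \text{tr}(\Sigma^2 U^T D U)$ is invariant under such intra-block conjugations, so any critical $U$ can be replaced by a signed-permutation representative without changing the value of $a$, and the extremum set is identified with the permutation matrices in the sense of entrywise squares. An alternative route that sidesteps the degeneracy analysis, at the cost of only producing the global extrema, is to set $P_{ij} := U_{ij}^2$ and observe that $P$ is doubly stochastic by orthogonality of $U$; since $a$ is then a linear functional of $P$, the Birkhoff--von Neumann theorem places its extrema on the Birkhoff polytope at its vertices, which are precisely the permutation matrices, and each such $P$ is realised by a (signed) permutation $U \in O(d)$.
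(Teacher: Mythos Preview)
Your main line of argument is essentially the paper's: both derive from first-order optimality that $\Sigma^2$ and $U^T D U$ commute (the paper via Lagrange multipliers, you via the Riemannian gradient on $O(d)$), and from this conclude that $U$ must be a (signed) permutation. Your treatment is in fact more careful about the degenerate case of repeated singular values, which the paper glosses over.

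Your Birkhoff--von Neumann alternative is a genuinely different and cleaner route for the global extrema specifically: writing $a(U,\Sigma)=\sum_{i,k} D_i \Sigma_k^2 U_{ik}^2$ and passing to the doubly stochastic matrix $P_{ik}=U_{ik}^2$ linearises the problem over the Birkhoff polytope, so extrema sit at permutation vertices with no case analysis needed. This bypasses the commutator computation entirely and handles degeneracies for free, at the cost of only capturing global (not all) extrema---which, as you note, is all that the downstream argument in the paper actually requires.
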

\begin{proof}
For $a(U, \Sigma)$ and $a(U, \Sigma)^2$, the minimium with respect to $U$ under the orthogonality constraint $U^T U = I$ is achieved since the objective function is continuous and the set of orthogonal matrices is compact. 

The Lagrangian for minimzing $a(U, \Sigma)$ is given by $\mathcal{L}(U, \mu) = tr\Big(\Sigma U^T D U \Sigma \Big) - \sum_{i, j} \mu_{ij} (u_i^T u_j - \delta_{ij})$.
Taking the gradient with respect to $U$ and setting to zero we obtain
\begin{align*}
    2 U^T D V \Sigma^2 &= (\mu + \mu^T).
\end{align*}
Due to $\mu + \mu^T$ we have the symmetry relation
\begin{align*}
    U^T D U \Sigma^2 &= \Sigma^2 U^T D V
\end{align*}
which means that $U^T D U$ and $\Sigma^2$ commute, implying that they are simultaneously diagonalizable. Therefore, $\Sigma^2$ and $U^T D U$ share the same eigenvectors. 

Since $\Sigma^2$ is diagonal, it has the standard basis vectors as eigenvectors. The set of orthogonal matrices that represent the standard basis are permutation matrices $P$. Hence, we want that $P U^T D U P^T = D$ which can only happen when $P U^T = I$ and hence $U = P$. Hence, $U$ has to be a permutation matrix. 

For minimizing $a^2 (U, \Sigma)$ we have that by setting the gradient of the associated Lagrangian to zero provides
\begin{align*}
    4 tr\Big(\Sigma U^T D U \Sigma \Big) U^T D U \Sigma^2 &= (\mu + \mu^T).
\end{align*}
If it exists, the equation can be satisfied by an orthogonal matrix $U_0$ such that $tr\Big(\Sigma U^T D U \Sigma \Big) = a(U_0, \Sigma) = 0$. Otherwise, just as before we see that $U^T D U$ and $\Sigma^2$ commute, implying that $U$ has to be a permutation matrix.
\end{proof}
\begin{remark}
Letting $U_{max}(\Sigma)$ and $U_{min}(\Sigma)$ be the permutation matrix which respectively maximize and minimize $a(U, \Sigma)$ for a given $\Sigma$, if there exists $U_0$ such that $a(U_0, \Sigma) = 0$ it immediately follows that 
\begin{align*}
    a(U_{max}(\Sigma), \Sigma) \geq 0 \geq a(U_{min}(\Sigma), \Sigma).
\end{align*}
\end{remark}

By showing that we can discard $U_0$, which means that we only have to work with permutation matrices, we can significantly simplify the problem. To show this, we first present the following result which tells us that for the optimal $(U', \Sigma')$ the sign of $a(\Sigma', U_{ext})$ is the same for all extrema $U_{ext}$.
\begin{lemma}\label{lma:a_sign}
Without loss of generality assume that $\sum_{i = 1}^d D_i \geq 0$ and let $(U', \Sigma')$ be optimal. Then for all extrema $U_{ext}$
\begin{itemize}
\item if $\sum_{i = 1}^d D_{i} = 0$ then $a(\Sigma', U_{ext}) = 0$ and $\Sigma' = h I$ and $U' = I$;
\item if $\sum_{i = 1}^d D_{i} > 0$ then $a(\Sigma', U_{ext}) \geq 0$.
\end{itemize}
\end{lemma}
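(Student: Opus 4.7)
The plan is to split the argument according to the sign of $\sum_i D_i$. For $\sum_i D_i = 0$, I would show directly that $\Sigma' = hI$ is the unique optimal $\Sigma$. At $\Sigma = hI$ the cyclic trace identity gives $a(U, hI) = h^2 \text{tr}(U^T D U) = h^2 \sum_i D_i = 0$ for every orthogonal $U$, so $\underline{l}(U, hI) = 2\sigma^2 d/h^2$. For any other feasible $\Sigma$, the constraint $\Sigma_i \le h$ forces $\sigma^2\sum_i 1/\Sigma_i^2 + \sigma^2 d/\Sigma_{max}^2 \ge 2\sigma^2 d/h^2$ with equality only at $\Sigma = hI$, and the approximation term $a^2/(4d\Sigma_{max}^2)$ is nonnegative, so overall $\underline{l} \ge 2\sigma^2 d/h^2$ with equality iff $\Sigma = hI$. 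Every optimizer therefore has $\Sigma' = hI$, at which $a(\Sigma', U_{ext}) = 0$ for every extremum; any orthogonal $U'$ works, in particular $U' = I$.

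For $\sum_i D_i > 0$, I would argue by contradiction. Writing $\lambda'_i := (\Sigma'_i)^2$ for compactness, one has $a(P_\pi, \Sigma') = \sum_i \lambda'_i D_{\pi(i)}$ at a permutation $P_\pi$. Suppose some extremum $U_\ast$ satisfies $a(\Sigma', U_\ast) < 0$; by Lemma \ref{lma:U_ext} the non-permutation extrema of $a^2$ all have $a = 0$, so $U_\ast$ must be a permutation and $a_{\min}(\Sigma') := \min_\pi a(P_\pi, \Sigma') < 0$. The averaging estimate
\begin{equation*}
a_{\max}(\Sigma') \ge \frac{1}{d!}\sum_{\pi} a(P_\pi, \Sigma') = \frac{1}{d}\Bigl(\sum_i \lambda'_i\Bigr)\Bigl(\sum_i D_i\Bigr) > 0
\end{equation*}
then shows $a$ attains both signs across permutations. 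Since a $\pi/2$-rotation in the $(i,j)$-coordinate plane produces a $U \in SO(d)$ whose action on $D$ swaps the $i$-th and $j$-th diagonal entries, every permutation value of $a$ is realized on the connected manifold $SO(d)$; continuity of $a$ plus the intermediate value theorem then yield $U_0$ with $a(U_0, \Sigma') = 0$. Consequently $\min_U a(U, \Sigma')^2 = 0$ and the optimality of $U'$ at $\Sigma'$ forces $\underline{l}(U', \Sigma') = \sigma^2\sum_i 1/\lambda'_i + \sigma^2 d/\lambda'_{\max}$.

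To exhibit a strictly better competitor I observe first that $\lambda' \ne h^2\mathbf{1}$, for otherwise $a(P_\pi, hI) = h^2\sum_i D_i > 0$ uniformly in $\pi$, contradicting $a_{\min}(\Sigma') < 0$. Along the straight line $\lambda(t) = (1-t)\lambda' + t h^2 \mathbf{1}$, $t \in [0,1]$, each $t \mapsto a(P_\pi, \lambda(t))$ is affine, so $a_{\min}(\lambda(\cdot))$ is continuous, strictly negative at $t = 0$, and strictly positive at $t = 1$; pick $t^\ast \in (0,1)$ with $a_{\min}(\lambda(t^\ast)) = 0$, realized by some permutation $P^{\ast\ast}$. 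Because $\lambda(t^\ast) \ge \lambda'$ coordinatewise with strict inequality at every coordinate where $\lambda'_i < h^2$ (at least one exists by $\lambda' \ne h^2\mathbf{1}$), and the noise term is strictly componentwise decreasing in $\lambda$, one obtains $\underline{l}(P^{\ast\ast}, \lambda(t^\ast)) < \underline{l}(U', \Sigma')$, contradicting optimality.

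The main technical subtlety I anticipate is transferring the sign change of $a$ from the permutation extrema to the connected manifold $SO(d)$, which I plan to handle through the $\pi/2$-rotation realization above so that the intermediate value theorem applies; the remainder is a componentwise monotonicity of the noise term combined with a linear interpolation towards $h^2\mathbf{1}$.
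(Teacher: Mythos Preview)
Your proof is correct. Case (i) is essentially identical to the paper's argument.

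In case (ii) you take a genuinely different route. The paper works with the minimizing permutation $U_{\min}$ (relabeled to $I$) and builds a bespoke interpolation $\Sigma(\alpha)$: it introduces a weighted average $c$ of the $(\Sigma'_i)^2$ over the negative-eigenvalue indices, identifies the set $J$ of positive-eigenvalue indices with $(\Sigma'_j)^2 \le c$, and pushes only those coordinates toward $\sqrt{c}$ until $a(\Sigma(\alpha_0), I) = 0$. Your approach is simpler: you interpolate linearly along the whole ray $\lambda(t) = (1-t)\lambda' + t h^2\mathbf{1}$ and locate $t^*$ where $a_{\min}(\lambda(t^*)) = 0$ by the intermediate value theorem, then pair $\lambda(t^*)$ with the minimizing permutation. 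Both constructions produce a feasible $\Sigma \succeq \Sigma'$ with strict improvement somewhere and vanishing approximation term, but your straight-line path avoids the index-set bookkeeping entirely.

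One remark: your $SO(d)$ detour (realizing permutation values via $\pi/2$-rotations to exhibit $U_0$ with $a(U_0,\Sigma')=0$, and hence deducing $\underline{l}(U',\Sigma') = \text{noise}(\lambda')$) is correct but unnecessary. Since the approximation term is nonnegative, one always has $\underline{l}(U',\Sigma') \ge \text{noise}(\lambda')$, and your strict inequality $\text{noise}(\lambda(t^*)) < \text{noise}(\lambda')$ already suffices for the contradiction. The paper does not need this step either; it simply uses $\underline{l}(I,\Sigma(\alpha_0)) = \text{noise}(\Sigma(\alpha_0)) < \text{noise}(\Sigma') \le \underline{l}(U',\Sigma')$. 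Dropping the $SO(d)$ paragraph would streamline your argument without any loss.
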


\begin{proof}
We proceed by cases. 

(i) Letting $\sum_{i = 1}^d D_i = 0$ we have that for any $\Sigma$ and all extrema $U_{ext}$
\begin{align*}
    \frac{1}{4 \Sigma_{max}^2} \frac{a(U_{ext}, \Sigma)}{d} + \sigma^2 \sum_{i = 1}^d \frac{1}{\Sigma_{i}^2} + \sigma^2 \frac{1}{\Sigma_{max}^2} d \geq \sigma^2 \sum_{i = 1}^d \frac{1}{\Sigma_{i}^2} + \sigma^2 \frac{1}{\Sigma_{max}^2} d.
\end{align*}
To minimize $\sigma^2 \sum_{i = 1}^d \frac{1}{\Sigma_{i}^2} + \sigma^2 \frac{1}{\Sigma_{max}^2} d$ we want to maximize each element along the diagonal $\Sigma$, which is achieved when $\Sigma = h I$. Hence, $\Sigma' = h I$ as this will minimize the loss for all extrema $U_{ext}$ chosen. Specifically, $I$ is a permutation matrix and hence $U' = I$. 

(ii) Assume that $\sum_{i = 1}^d D_{i} > 0$. If $D \succeq 0$ the result is immediate. Hence, assume that at least one entry of $D$ is negative. For contradiction, assume that for $\Sigma'$ we have
$a(\Sigma', U_{min}) < 0$, where $U_{min}$ is the permutation matrix which minimizes $a(\Sigma', U)$. For ease of notation and without loss of generality, we assume that $\Sigma'$ is sorted such that $U_{min}(\Sigma') = I$. We proceed by showing that we can find $\Sigma \succeq \Sigma'$ which reduces the objective function leading to a contradiction.

For the proof we introduce some further notation.
Let $I = \{i \vert D_i < 0 \}$. We assign $c = (\sum_{i \in I} \vert D_i \vert (\Sigma'_i)^2)/(\sum_{i \in I} \vert D_i \vert)$ where $\sqrt{c} \leq h$ since by assumption $\Sigma' \preceq hI$. Set $J = \{j | (\Sigma'_j)^2 \leq c, j \not \in I\}$. 
Lastly, define a function $\Sigma(\alpha) : [0, 1] \rightarrow \mathbb{R}^{d \times d}$ such that 
\begin{align*}
    (\Sigma(\alpha))_{i} =
    \begin{cases}
        \Sigma_{i} + \alpha (\sqrt{c} - \Sigma_{i}) & \text{if } i \in J \\
        \Sigma_{i} & \text{otherwise.}
    \end{cases}
\end{align*}

We show that given this construction $a(\Sigma(1), U_{min}) \geq 0 > a(\Sigma(0), U_{min})$. The lower bound is immediate. Letting $\alpha = 1$
\begin{align*}
    \sum_{i \not \in I} D_{i}(\Sigma'_{i}(1))^2  &= \sum_{i \in J, i \not \in I} c D_{i} + \sum_{i \not \in J, i \not \in I} D_{i} (\Sigma'_{i})^2 \\
    &\geq c \sum_{i \not \in I} D_{i} \\
    &\geq \sum_{i \in I} \vert D_i \vert (\Sigma'_{i})^2.
\end{align*}
The last step follows from $\sum_{i \in I} \vert D_{i} \vert < \sum_{i \not \in I}  \vert D_{i} \vert$ due to the assumption $\sum_{i = 1}^d D_i > 0$. By the above $\sum_{i \not \in I} D_{i}(\Sigma'_{i}(1))^2 - \sum_{i \in I} \vert D_i \vert (\Sigma'_{i})^2 \geq 0$ which by definition implies $a(\Sigma(1), U_{min}) \geq 0$. 

By the continuity of $a(\Sigma(\cdot), U_{min}))$ and the above, we have that by the intermediate value theorem there exists an $\alpha_0 \in (0, 1]$ such that $a(\Sigma(\alpha_0), U_{min}) = 0$. Combined with the fact that $\Sigma(\alpha_0) \succeq \Sigma'$ the objective function has been strictly decreased by using $\Sigma(\alpha_0)$ leading to a contraction.
\end{proof}

\begin{proof}[Proof of Proposition \ref{prop:Uopt}]
    If $\sum_{i} D_i = 0$ we are immediately done as by the first part of Lemma \ref{lma:a_sign} $U = I$. When $\sum_i D_i > 0$, we have that $a(\Sigma', U') \geq 0$ which implies that even if $U_0$ exists
    \begin{align*}
        a(U_{max}(\Sigma), \Sigma) \geq a(U_{0}(\Sigma), \Sigma) \geq a(U_{min}(\Sigma), \Sigma) \geq 0
    \end{align*}
    and hence $U_{min}$ is optimal. If $U_0$ does not exist, then trivially $U' = U_{min}$. If $U_{min}$ is not the identity matrix, we can permute $U_{min}$ and any $\Sigma$ without changing the objective function. Hence, $U' = I$ and $\Sigma' = \argmin_{\Sigma} \underline{l}(I, \Sigma)$.
\end{proof}

\subsubsection{Proof of Proposition \ref{prop:optV}}

\begin{proof}
Since $\Sigma'$ is assumed to be a solution, $\Sigma' \succ 0$. We proceed by showing that the terms in the respective objective functions $l(U', \Sigma', V')$ and $\underline{l}(U', \Sigma')$ can be matched up and shown to be equal. Let $S' = \Sigma' (V')^T$, recalling that $U' = I$.

To show that the first term of the objective functions are equal note that
\begin{align*}
    (s'_i)^T D s'_i &= (v'_i)^T \Sigma' D \Sigma' v'_i \\
    &= \sum_{j = 1}^d (V'_{i, j})^2 D_{j} (\Sigma'_{j})^2 \\
    &= \frac{1}{d} \sum_{j = 1}^d D_{j} (\Sigma'_{j})^2 & \text{Due to $V$ being the Hadamard matrix.}\\
    &= \frac{1}{d} \text{tr}((\Sigma')^T D \Sigma') = \frac{1}{d} a(U', \Sigma').
\end{align*}
It then follows that $[(s'_1)^T D s'_1, \dots, (s'_n)^T D s'_n]^T = \frac{1}{d} a(U', \Sigma') \mathbf{1}$. Since $V' = M_{k}$, $\mathbf{1}$ is the right singular vector of $S'$ associated with $\Sigma_{max}$. Hence, by  Lemma \ref{lma:lower_bound_opt} we have that $\frac{1}{4} \Vert S^{-T} [s_i^T D s_i]^T_{1 \leq i \leq d} \Vert_2^2 = \frac{1}{4d (\Sigma'_{max})^2} a(U', \Sigma')^2$.

The second terms are trivially equal since the Frobenius norm is always the sum of the squares of the singular values. The third terms are equivalent as by Lemma \ref{lma:lower_bound_opt}, as similarly to the first term, $\mathbf{1}$ is the right singular vector associated with the largest singular value as .
\end{proof}

\subsection{Proofs for Proposition \ref{prop:sig_alg}}\label{app:sig_alg}

For the following proofs we write $l(\lambda) := l(I, diag(\lambda), V^*) = \underline{l}(I, diag(\lambda))$. More specifically
\begin{align*}
    l(\lambda) := 
    \begin{cases}
        \frac{(\sum_{i = 1}^d D_i \lambda^2_i)^2}{4 d \lambda_{max}^2} + \sigma^2 \sum_{i =0}^d \frac{1}{\lambda^2_i} + \sigma^2 \frac{1}{\Sigma_{max}^2} d & \text{if } h I \succeq \lambda \succ 0 \\
        \infty & \text{otherwise.}
    \end{cases}
\end{align*}
Hence, our objective is to find $\lambda^* = \argmin_{\lambda} l(\lambda)$. We will also use $l_A$ which we define in Section \ref{sec:cand_sol}.

\subsubsection{Proof of Proposition \ref{prop:sig_dec}}\label{app:sig_dec}
\begin{proof}
Define $K = \{i \vert D_i \leq 0\}$ for which the complement is given by $K^c = \{i \vert D_i > 0 \}$. We first show that $\lambda_i = h^2$ for $D_i \leq 0$. Assume for contradiction that for some $1 \leq i \leq d$ with $D_i \leq 0$ we have $\lambda^*_i < h^2$.
Now, define the function $\lambda(\alpha) \in \mathbb{R}^d$ for $\alpha \in [0, 1]$ as 
\begin{align*}
    \lambda_i(\alpha) &= 
    \begin{cases}
        h^2 & \text{if $i \in K$ or $\lambda^*_i + \alpha h^2 \geq h^2$,} \\
        \lambda^*_i + \alpha h^2 & \text{otherwise.}
    \end{cases}
\end{align*}
Given this definition $\lambda(\alpha) \succ \lambda^*$ for any chosen $\alpha$ value. Hence, the noise error component of the objective function will have strictly decreased. To complete the proof, we show that there exists an $\alpha^*$ for which the approximation error does not increase. 

By Lemma \ref{lma:a_sign} and the assumption that $\sum_{i = 1}^d D_i \geq 0$ we know that for the optimal $\lambda^*$ we have $\sum_{i = 1}^d D_i \lambda^*_i \geq 0$. If instead of $\lambda^*$ we use $\lambda(0)$ and $\sum_{i = 1}^d D_i \lambda(0) \geq 0$ then we immediately know that we did not increase the approximation error. This is because now $\lambda_{max} = h^2$, placing a larger quantity in the denominator, and we made the $\lambda_i$ corresponding to $D_i \leq 0$ larger which results in adding a larger negative quantity to the numerator, i.e. 
\begin{align*}
    \sum_{i = 1}^d \lambda^*_i D_i &\geq \sum_{i \in K} D_i h^2 + \sum_{i \in K^c} \lambda^*_i D_i \\
    &= \sum_{i = 1}^d \lambda(0)_i D_i \geq 0.
\end{align*}

If instead $\sum_{i = 1}^d D_i \lambda(0) \leq 0$ we use the intermediate value theorem to find an $\alpha^*$ which makes the sum equal to zero. By the assumption $\sum_{i = 1}^d D_i \geq 0$ and that $\lambda(1) = h^2$ it follows that $\sum_{i = 1}^d D_i \lambda(1) \geq 0$. Hence, by the intermediate value theorem there exists an $\alpha^*$ such that $\sum_{i = 1}^d D_i \lambda(\alpha^*) = 0$ which implies that the approximation error is now zero. Hence, under this $\alpha^*$ we could not have increased the approximation error.

We know that $\lambda^*_i = h^2$ for all $i \in K$. Since $D$ is in increasing order, these terms will fill the first $\vert K \vert$ entries of $\lambda$. For the remainder we show that $\lambda$ has to be in decreasing order by way of contradiction. Assume that there exist $i, j \in K^c$ with $i < j$ and $\lambda_i < \lambda_j$. We then define
\begin{align*}
    s &= \frac{D_i \lambda_i + D_j \lambda_j}{D_i + D_j}.
\end{align*}
This is well-defined as $D_i, D_j > 0$ by assumption. 
Due to the assumption we have immediately that $D_j \lambda_j > D_i \lambda_i$ from which it then follows that $\frac{1}{\lambda_i} + \frac{1}{\lambda_j} > \frac{2}{s}$. Since $s (D_i + D_j) = \lambda_i D_i + \lambda_j D_j$, we did not change the approximation error. However, we reduced the noise error since $\frac{1}{\lambda_i} + \frac{1}{\lambda_j} > \frac{2}{s}$. So the objective function can be reduced by replacing $\lambda_i$ and $\lambda_j$ with $s$, leading to a contradiction.
\end{proof}

\subsubsection{Proofs for Finding all Candidate Solutions}\label{app:active_sets}
\begin{lemma}\label{lma:active_existance}
    There exists $A \in \mathcal{P}(\{1, \dots, d\})$ such that $\lambda^* \in \lambda^*_A$.
\end{lemma}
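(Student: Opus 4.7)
The plan is to identify the active set directly from the minimizer $\lambda^*$, show that the $\max$ operator can be replaced by the first component near $\lambda^*$ (so the problem becomes smooth), and then read off the stationarity condition from standard first-order optimality with respect to the inactive coordinates.

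First I would argue that a minimizer $\lambda^*$ exists. The domain $\{0 \prec \lambda \preceq h^2\}$ is not compact, but the noise term $\sigma^2 \sum_i 1/\lambda_i$ blows up as any $\lambda_i \to 0^+$. Hence the infimum is attained on some compact subset $[\varepsilon,h^2]^d$, and continuity of $l$ there gives the existence of $\lambda^*$. Define
\[
A := \{\, i \in \{1,\dots,d\} \ : \ \lambda^*_i = h^2 \,\}.
\]

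Next I would use Proposition \ref{prop:sig_dec}, which asserts that $\lambda^*$ is in decreasing order. This has two consequences: (i) $\lambda^*_{\max} = \lambda^*_1$, and (ii) if $A \neq \emptyset$ then $1 \in A$, because any index $i$ with $\lambda^*_i = h^2$ forces $\lambda^*_1 \geq \lambda^*_i = h^2$ and hence $\lambda^*_1 = h^2$. In either case (whether $A = \emptyset$ or $1 \in A$), the quantity $\lambda^{*(A)}_1$ used inside $l_A$ coincides with $\lambda^*_{\max}$, so that $l(\lambda^*) = l_A(\lambda^*)$.

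Now I would pass to a neighborhood of $\lambda^*$. For $\lambda$ close enough to $\lambda^*$, the coordinate $\lambda_1$ remains the maximum (among the coordinates that are not pinned at $h^2$, and in any case $\lambda^{(A)}_1$ is either $h^2$ or $\lambda_1$ by definition), so $l$ and $l_A$ agree on a relative neighborhood of $\lambda^*$ inside the feasible region. Since $l_A$ is smooth in the inactive coordinates and those coordinates satisfy the strict inequalities $0 < \lambda^*_i < h^2$, the constraint is inactive at $\lambda^*$ for each $i \notin A$; classical interior first-order optimality therefore yields $\partial l_A / \partial \lambda_i \,(\lambda^*) = 0$ for all $i \notin A$. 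By construction $l_A$ does not depend on the $\lambda_i$ with $i \in A$, so the remaining partial derivatives are trivially zero. Hence $\nabla l_A(\lambda^*) = 0$, which is precisely the defining condition for $\lambda^* \in \lambda^*_A$.

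The only delicate point, and the one I would treat most carefully, is the replacement of the non-smooth $\lambda_{\max}$ by $\lambda_1$ when writing down the stationarity condition for $l$. Proposition \ref{prop:sig_dec} is exactly what makes this replacement legitimate at $\lambda^*$; with that in hand the rest is a routine KKT argument. No minimax or subdifferential calculus is needed because the substitution turns the problem into an unconstrained smooth problem on the inactive coordinates.
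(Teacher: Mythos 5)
Your proposal follows essentially the same route as the paper: define $A=\{i:\lambda^*_i=h^2\}$, use Proposition \ref{prop:sig_dec} to justify replacing $\lambda_{\max}$ by $\lambda_1$ so that $l_A(\lambda^*)=l(\lambda^*)$, and conclude stationarity of $l_A$ at $\lambda^*$. One imprecision: $l$ and $l_A$ do \emph{not} agree on a neighborhood of $\lambda^*$ (off the slice $\{\lambda_i=h^2,\ i\in A\}$ the two differ, and a tie $\lambda^*_1=\lambda^*_j$ could be broken the wrong way by a perturbation); the clean fix, which is what the paper does, is the one-sided inequality $l_A(\lambda)\geq l(\lambda^{(A)})\geq l(\lambda^*)=l_A(\lambda^*)$, which shows $\lambda^*$ is a (global, hence local) minimum of $l_A$ and then Fermat's theorem gives $\nabla l_A(\lambda^*)=0$.
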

\begin{proof}
    Let $A = \{i \vert \lambda^*_i = h^2\}$, i.e. the set of all indices of the global optimum at which the constraints are strict. Since $\lambda^*$ is a minimizer of $l(\lambda)$, $h \succeq \lambda^* \succ 0$ and hence $\lambda^*$ is finite and in the domain of $l_{A}(\lambda)$. Additionally, Proposition \ref{prop:sig_dec} implies $\lambda_1 = \lambda_{max}$. Hence, $l_A(\lambda^*) = l(\lambda^*)$. Since by Fermat's theorem for all differentiable functions on open sets a point is a local minima if and only if it is a stationary point, it suffices to show that $\lambda^*$ is a local minimum of $l_A(\lambda)$. For this, note that for any in the domain of $l_A$ we have $l_A(\lambda) \geq l(\lambda^{(A)})$. Since $\lambda^*$ is a minimizer of $l(\lambda)$ and the two functions coincide there, it follows that $\lambda^*$ is a local minimum of $l_A$ as well and hence $\lambda^* \in \lambda_A^*$.
\end{proof}

\begin{proof}[Proof of Proposition \ref{prop:active_sets_solution_exists}]
    By Lemma \ref{lma:active_existance} we know that there exists an active set for which one of the solutions provides the minimizer $\lambda^*$.
    Now, assume for contradiction that there does not exist $J \in \{\vert K \vert, \dots, d\}$ such that $\lambda^* \in \lambda^*_{A_J}$.  Then, there has to either exist indices $i < j < k$ such that $\lambda^*_i = \lambda^*_k = h^2$ but $\lambda^*_j < h^2$ or an index $1 \leq l \leq \vert K \vert$ such that $\lambda_l < h^2$. However, by Proposition \ref{prop:sig_dec} we know that $\lambda^*$ is decreasing and that the first $\vert K \vert$ entries have to be set to $h^2$. 
\end{proof}

\begin{proof}[Proof of Proposition \ref{prop:FONC}]
Let $a = \sum_{i \not \in A} D_i \lambda_i + h^2 \sum_{i \in A} D_i$ and $\mathbbm{1}$ be the indicator function. Then the stationary points are simply all the points which satisfy
\begin{align}
    \frac{\partial l_A(\lambda)}{\partial \lambda_{1}} &= \mathbbm{1}(1 \not \in A) \left( \frac{D_{1}}{2 d \lambda_{1}} a - \frac{1}{4 d \lambda_{1}^2} a^2 - \sigma^2 \frac{d + 1}{\lambda_{1}^2} \right) = 0 \\
    \frac{\partial l_A(\lambda)}{\partial \lambda_{i}}  &= \mathbbm{1}(i \not \in A) \left( \frac{D_i}{2 d \lambda_{1}} a - \sigma^2 \frac{1}{\lambda_{i}^2} \right) = 0. \label{eq:FONC1}
\end{align}

To manipulate the above equations we note that if not all constraints are set, i.e. $\vert A \vert < d$, it holds that $a > 0$ for every stationary point. This is because $a$ is non-negative due to Lemma \ref{lma:a_sign} and $a \neq 0$ as otherwise the stationary point can not exist since Equation \ref{eq:FONC1} can not be satisfied. It also holds that $D_i > 0$ for all $i \not \in A$, as is immediate by the assumption that $K \subseteq A$. The result then follows by simple algebra.
\end{proof}

The following Lemmas prove Proposition \ref{prop:solve_a} by splitting it into two cases.
\begin{lemma}\label{lma:solve_pos_a}
When the active set $A_J$ is empty then the unique non-negative solution to $a$ as given by the equations in Proposition \ref{prop:FONC} is 
\begin{align*}
    a = \sqrt{2} \left(\frac{d \sigma^2}{D_{1}} \left(K \sqrt{8 D_{1} (d + 1) + K^2} + 2D_{1}(d + 1) + K^2 \right)\right)^{1/2}
\end{align*}
with $K = \sum_{i = 2}^d \sqrt{D_{i}}$.
\end{lemma}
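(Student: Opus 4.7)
The plan is to eliminate every variable other than $a$ from the stationarity system of Proposition~\ref{prop:FONC} specialized to $A = \emptyset$, and then reduce the problem to a single polynomial equation in $a$ that can be solved in closed form.

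First, I would rewrite the first equation of Proposition~\ref{prop:FONC} as the explicit formula
\[
\lambda_{1} \;=\; \frac{a^{2} + 4 d \sigma^{2}(d+1)}{2 a D_{1}},
\]
and substitute this together with $\lambda_{i} = \sigma \sqrt{2 d \lambda_{1}/(a D_{i})}$ for $i \geq 2$ into the consistency relation $a = \sum_{i} D_{i} \lambda_{i}$. The $i = 1$ contribution collapses to $\tfrac{a}{2} + \tfrac{2 d \sigma^{2}(d+1)}{a}$, while the remaining indices contribute $\sigma K \sqrt{2 d \lambda_{1}/a}$, so after rearrangement one obtains
\[
\frac{a^{2} - 4 d \sigma^{2}(d+1)}{2 a} \;=\; \sigma K \sqrt{\frac{2 d \lambda_{1}}{a}}.
\]

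Next, I would substitute the closed form for $\lambda_{1}$ into the radical, square both sides, and clear denominators. Writing $u = a^{2}$, $c = 4 d \sigma^{2}(d+1)$, and $\alpha = 4 d \sigma^{2} K^{2}/D_{1}$, the result is the quadratic
\[
u^{2} - (2 c + \alpha) u + c(c - \alpha) \;=\; 0,
\]
whose discriminant collapses to $\alpha(\alpha + 8 c)$. The quadratic formula then gives two candidate values for $u$. The unsquared equation forces the sign condition $a^{2} > c$, which rules out the smaller root; factoring $\tfrac{4 d \sigma^{2}}{D_{1}}$ out of the larger root recovers exactly the closed-form expression stated in the lemma.

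For uniqueness I would argue directly from the rearranged equation. The left-hand side $\tfrac{a}{2} - \tfrac{c}{2a}$ is strictly increasing in $a > 0$, whereas substituting $\lambda_{1}(a)$ into the radical shows the right-hand side equals $\sigma K \sqrt{d/D_{1}} \sqrt{1 + c/a^{2}}$, which is strictly decreasing. The left-hand side is negative for $a^{2} < c$ and unbounded as $a \to \infty$, while the right-hand side stays positive and bounded, so exactly one nonnegative $a$ solves the equation, and it coincides with the large-root branch. The main obstacle will be the algebraic bookkeeping in the reduction step: one has to simplify $\sqrt{2 d \lambda_{1}(a)/a}$ cleanly enough to see that the squared equation is genuinely quadratic in $u = a^{2}$, and then match the quadratic-formula output to the factored form containing $\sqrt{8 D_{1}(d+1) + K^{2}}$. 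The discriminant collapse and the selection of the correct root are the only substantive points; everything else is routine symbol manipulation.
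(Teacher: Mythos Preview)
Your proposal is correct and follows essentially the same route as the paper: substitute the expressions for $\lambda_1$ and $\lambda_i$ from Proposition~\ref{prop:FONC} into the consistency relation $a=\sum_i D_i\lambda_i$, square to obtain a quartic in $a$ (equivalently your quadratic in $u=a^2$), and then select the correct root. The one minor difference is that the paper discards the spurious root pair by direct verification in the unsquared equation, whereas your sign condition $a^2>c$ together with the monotonicity argument is a cleaner way to isolate the unique admissible root.
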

\begin{proof}
We rewrite $a = \sum_{i = 1}^d D_i \lambda_i$ by plugging in $\lambda_i$ to obtain
\begin{align*}
    a = \sigma \sqrt{\frac{2 d \lambda_{1}}{a}} \sum_{i = 2}^d  \sqrt{D_i} + \lambda_{1} D_{1}.
\end{align*}
Letting $K = \sum_{i = 2}^d \sqrt{D_{i}}$, substituting $\lambda_{1}$ and simplifying we get
\begin{align*}
    a = \frac{1}{a} \left( \sigma \sqrt{\frac{2d}{D_{1}}\left(\frac{a^2}{2} + 2 \sigma^2 d (d + 1) \right)} K + \left(\frac{a^2}{2} + 2 \sigma^2 d (d + 1) \right)\right).
\end{align*}
The equation can be rewritten as solving for the roots of a fourth-order polynomial 
\begin{align*}
    \left(\frac{a^2}{2} - 2\sigma^2 d (d + 1) \right)^2 - K^2 \frac{2d \sigma^2}{D_{1}} \left(\frac{a^2}{2} + 2 \sigma^2 d(d + 1)\right) &= 0.
\end{align*}
However, during the algebraic manipulation, additional roots may be have been introduced. 
The potentially complex roots of the fourth order polynomial are given by two pairs
\begin{align*}
    a'_1 &= \pm \sqrt{2} \left(\frac{d \sigma^2}{D_{1}} \left(K \sqrt{8 D_{1} (d + 1) + K^2} + 2D_{1}(d + 1) + K^2 \right)\right)^{1/2} \\
    a'_2 &= \pm \sqrt{2} \left(\frac{d \sigma^2}{D_{1}} \left(-K \sqrt{8 D_{1} (d + 1) + K^2} + 2D_{1}(d + 1) + K^2 \right)\right)^{1/2}
\end{align*}
which can be easily verified by plugging into the polynomial. 

It is easy to verify that while $a'_1$ satisfies the original equation, $a'_2$ will not. 
Of the pair $a'_1$, by assumption only the non-negative can be valid. Since all terms under square root are positive, the solution is also real and hence provides the unique solution
\begin{align*}
    a = \sqrt{2} \left(\frac{d \sigma^2}{D_{1}} \left(K \sqrt{8 D_{1} (d + 1) + K^2} + 2D_{1}(d + 1) + K^2 \right)\right)^{1/2}.
\end{align*}

\end{proof}

\begin{lemma}\label{lma:solve_indef_a}
When the active set $A_J$ is non-empty the solution for $a$ given by the equations in Proposition \ref{prop:FONC} is given by 
\begin{align*}
    a &= 
    \begin{cases}
        \frac{\sqrt[3]{\frac{2}{3}} c_2}{\sqrt[3]{\sqrt{3} \sqrt{27 c_1^2-4 c_2^3}+9 c_1}}+\frac{\sqrt[3]{\sqrt{3} \sqrt{27 c_1^2-4 c_2^3}+9 c_1}}{\sqrt[3]{18}} & \text{if } 27 c^1 - 4c_2^3 \geq 0, \\
        \cos(\theta/3) 2 \sqrt{\frac{c_2}{3}} & \text{otherwise,}
    \end{cases}
\end{align*}
where $c_1 = \sigma \sqrt{\frac{2 d h^2}{a}} \sum_{i = I + 1}^d \sqrt{D_{i}}$ and $c_2 = \sum_{i=1}^I D_{i} h^2$.
\end{lemma}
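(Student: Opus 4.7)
\textbf{Proof plan for Lemma \ref{lma:solve_indef_a}.}

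The plan is to reduce the coupled system of stationarity equations to a single cubic polynomial in $a$, then apply Cardano's formula (or its trigonometric variant when three real roots occur) to produce a closed form, and finally isolate the unique admissible root.

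First I would use Proposition \ref{prop:FONC} with $A = A_J$ and $J\ge 1$ to eliminate every $\lambda_i$ in favour of $a$. Since $1\in A_J$, the equation for $\lambda_1$ collapses to $\lambda_1 = h^2$; for $i\in A_J$ we have $\lambda_i = h^2$ directly; and for $i>J$ we have $\lambda_i = \sigma\sqrt{2dh^2/(aD_i)}$. Substituting these into $a = \sum_{i\notin A}D_i\lambda_i + h^2\sum_{i\in A}D_i$ and pulling the $\sqrt{a}$ outside the sum gives the scalar equation
\begin{equation*}
a - c_2 \;=\; \sigma\,\sqrt{\tfrac{2dh^2}{a}}\;\sum_{i=J+1}^{d}\sqrt{D_i},
\end{equation*}
with $c_2 = h^2\sum_{i=1}^{J}D_i$. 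Note that the left-hand side must be non-negative (since $D_{J+1},\dots,D_d>0$ by construction of $A_J$ and the right-hand side is positive), so any admissible $a$ must satisfy $a\ge c_2$. Squaring and rearranging, and absorbing the constants into $c_1$ exactly as defined in Proposition \ref{prop:solve_a}, yields a cubic in $a$ of the form $a(a-c_2)^2 = \kappa c_1^2$ for an explicit positive constant $\kappa$, equivalently
\begin{equation*}
a^3 - 2c_2 a^2 + c_2^2 a - \kappa c_1^2 \;=\; 0.
\end{equation*}

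Next I would depress the cubic via the shift $a = y + 2c_2/3$, producing $y^3 + py + q = 0$ with $p = -c_2^2/3$ and $q$ a specific linear combination of $c_2^3$ and $c_1^2$. A direct computation of the Cardano discriminant $\Delta = -(4p^3+27q^2)$ then shows $\Delta$ has the same sign as an expression proportional to $4c_2^3 - 27c_1^2$, which is the branching quantity in the lemma's case split. In the regime where only one real root exists (discriminant non-positive), I apply the standard Cardano formula $y = \sqrt[3]{-q/2+\sqrt{q^2/4+p^3/27}} + \sqrt[3]{-q/2-\sqrt{q^2/4+p^3/27}}$; shifting back and simplifying the resulting surds produces exactly the first expression in the lemma. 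In the regime with three real roots, I apply the trigonometric substitution $y = 2\sqrt{-p/3}\cos\vartheta$, which reduces the depressed cubic to $\cos 3\vartheta = \tfrac{3q}{2p}\sqrt{-3/p}$; unwinding the constants yields $\cos 3\vartheta \propto 9c_1/\sqrt{12c_2^3}$, i.e.\ the definition of $\theta$ in the lemma. Choosing the principal branch $\vartheta = \theta/3$ and translating back by $+2c_2/3$ recovers the trigonometric expression.

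Finally, I would address uniqueness. Squaring may in principle introduce extraneous roots, so I would verify that among the (up to three) real roots of the cubic, exactly one satisfies the pre-squared equation $a - c_2 = \sigma\sqrt{2dh^2/a}\,\sum\sqrt{D_i}$; this reduces to picking the unique root with $a\ge c_2$. In the single-real-root case this is automatic from $c_1,c_2>0$ and positivity of the chosen cube roots. In the three-real-root case, I would use monotonicity: the function $\varphi(a) = a(a-c_2)^2$ restricted to $a\ge c_2$ is strictly increasing from $0$ to $\infty$, hence the equation $\varphi(a)=\kappa c_1^2$ has exactly one solution on $[c_2,\infty)$, which is the one selected by $\vartheta = \theta/3 \in [0,\pi/3]$ in the trigonometric formula.

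The main obstacle will be the algebraic bookkeeping in the Cardano/trigonometric reduction: matching the surd expressions in the lemma to the textbook formula requires carefully tracking the constants $\kappa$, $2c_2/3$, and the factors $\sqrt[3]{2/3}$ and $\sqrt[3]{18}$. The conceptual content—reducing to a cubic and solving it—is routine; everything else is substitution and verification that the chosen branch meets the constraint $a\ge c_2 > 0$.
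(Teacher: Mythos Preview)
Your plan is sound but takes a longer route than the paper and will not land on the displayed formulae literally. Rather than squaring, the paper multiplies the stationarity relation through by $\sqrt a$ to obtain $a^{3/2}-c_2\,a^{1/2}=c_1$ and then substitutes $x=\sqrt a$, yielding the \emph{already depressed} cubic $x^3-c_2x-c_1=0$; Cardano (resp.\ the trigonometric form) applied to this cubic gives the two displayed expressions verbatim, and uniqueness is the short Vieta argument of Lemma~\ref{lma:cubic_root}. The displayed formulae are therefore formulae for $\sqrt a$, not for $a$ (compare Algorithm~\ref{alg:get_lambda_J}, which squares the result; the $c_1$ printed in the lemma also carries a stray $1/\sqrt a$). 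Your cubic $a(a-c_2)^2=c_1^2$ has as roots precisely the squares of the $x$-roots; after depressing you get $p=-c_2^2/3$ and $q=2c_2^3/27-c_1^2$, so in the trigonometric branch $\cos 3\vartheta=(27c_1^2-2c_2^3)/(2c_2^3)=2\cos^2\theta-1=\cos 2\theta$, i.e.\ $\vartheta=2\theta/3$ rather than $\theta/3$, and the shifted answer is $a=\tfrac{2c_2}{3}(1+\cos\tfrac{2\theta}{3})=\tfrac{4c_2}{3}\cos^2(\theta/3)$, the square of the lemma's expression. Likewise in the radical branch your cube roots are $u^2,v^2$ for the paper's $u,v$, and the shift $+2c_2/3=2uv$ completes the square to $(u+v)^2$. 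So your route is valid but produces $a$, not $\sqrt a$; matching the lemma verbatim requires the $x=\sqrt a$ substitution. Your monotonicity argument for uniqueness on $[c_2,\infty)$ is a clean alternative to Lemma~\ref{lma:cubic_root}, and $c_2>0$ does hold in the three-root branch since $4c_2^3>27c_1^2>0$.
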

For the proof we note the following Lemma 
\begin{lemma}\label{lma:cubic_root}
For $c_1 > 0$ and $c_2 \in \mathbb{R}$ there exists only one positive real root for the polynomial $f(x) = x^3 - c_2 x - c_1$. 
\end{lemma}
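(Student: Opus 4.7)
The plan is to prove Lemma~\ref{lma:cubic_root} by a simple monotonicity argument on the real line, case-splitting on the sign of $c_2$. Existence follows from the intermediate value theorem since $f(0) = -c_1 < 0$ and $f(x) \to \infty$ as $x \to \infty$, so at least one positive real root exists. The work is entirely in uniqueness.

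For uniqueness I would study $f'(x) = 3x^2 - c_2$. If $c_2 \leq 0$, then $f'(x) > 0$ on $(0,\infty)$, so $f$ is strictly increasing on $[0,\infty)$ and can cross zero at most once; combined with existence this gives exactly one positive root. If $c_2 > 0$, the derivative vanishes on the positive axis only at $x_0 = \sqrt{c_2/3}$, with $f$ strictly decreasing on $(0, x_0)$ and strictly increasing on $(x_0, \infty)$. Since $f(0) = -c_1 < 0$, monotonicity forces $f(x_0) < f(0) < 0$, so $f < 0$ throughout $(0, x_0]$ and contributes no root there. On $(x_0, \infty)$, $f$ is strictly increasing from a negative value to $+\infty$, hence crosses zero exactly once. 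In either case, $f$ has a unique positive real root.

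The main obstacle is essentially none; the only subtlety is remembering to check both signs of $c_2$ and to verify in the $c_2 > 0$ case that the local minimum value $f(x_0)$ is negative (which is automatic from monotonicity plus $f(0) < 0$, without evaluating $f(x_0)$ explicitly). Alternatively, a one-line proof via Descartes' rule of signs works: the coefficient sign sequence of $x^3 + 0 \cdot x^2 - c_2 x - c_1$ exhibits exactly one sign change regardless of whether $c_2$ is positive, negative, or zero (since $c_1 > 0$), so there is exactly one positive real root. I would present the monotonicity argument as the primary proof since it is self-contained and elementary, and mention Descartes' rule only if space allows.
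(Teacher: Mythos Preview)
Your proof is correct but takes a genuinely different route from the paper. The paper argues via Vieta's formulas: the product of the three roots equals $c_1 > 0$ and their sum equals $0$ (since the $x^2$ coefficient vanishes), and from these two constraints one deduces that exactly one root must be positive real. Your argument instead works directly with the graph of $f$ on $(0,\infty)$, using $f(0) = -c_1 < 0$, the intermediate value theorem, and monotonicity of $f$ (via $f'$) to pin down a unique crossing.

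Both approaches are short and elementary. Your calculus argument has the advantage of being entirely self-contained on the positive real axis and avoids any implicit case analysis on whether the other two roots are real or a complex-conjugate pair, which the paper's Vieta argument needs but leaves to the reader. The paper's approach, on the other hand, is coordinate-free in the sense that it never computes a derivative or a critical point. Your mention of Descartes' rule of signs is also valid and is arguably the cleanest one-line justification; it is not the route the paper takes either.
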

\begin{proof}
Since $c_1 > 0$ and for any polynomial the negative of the order zero coefficient is the product of the roots, the product of the roots is positive and no root is zero. Furthermore, the second order term presents the sum of the roots, which is zero. For the sum of the roots to be zero, at least one root has to have a negative real component. For the product to be positive there has to exist another root with negative real component and one positive real root. Hence, there exists a unique real positive root. 
\end{proof}

\begin{proof}[Poof of Lemma \ref{lma:solve_indef_a}]
Plugging the equations for $\lambda_{i}$ in $a = \sum_{i = 1}^d D_i \lambda_i$ and after some simplification we get
\begin{align*}
    a^{3/2} &= \sigma \sqrt{2 d h^2}\sum_{i \not \in I} \sqrt{D_{i}} + \sqrt{a} \sum_{i \in I} D_{i} h^2.
\end{align*}
Letting $c_1 = \sigma \sqrt{\frac{2 d h^2}{a}} \sum_{i \not \in I}\sqrt{D_{i}}$, $c_2 = \sum_{i \in I} D_{i} h^2$, and $x = \sqrt{a}$ the equation becomes 
\begin{align*}
    x^3 = c_1 + c_2 x
\end{align*}
when restricting $x \geq 0$. By Lemma \ref{lma:cubic_root} and $c_1 > 0$, there exists a unique positive $x$ satisfying the equation which provides the solution $\sqrt{a}$.

The root is given by
\begin{align*}
    x &= \frac{\sqrt[3]{\frac{2}{3}} c_2}{\sqrt[3]{\sqrt{3} \sqrt{27 c_1^2-4 c_2^3}+9 c_1}}+\frac{\sqrt[3]{\sqrt{3} \sqrt{27 c_1^2-4 c_2^3}+9 c_1}}{\sqrt[3]{18}}.
\end{align*}
We now show that the root is always well-defined, real, and non-negative. The root can be easily verified to be a root by plugging it into the polynomial. 

To show well defined, real, and non-negative we split in two cases: $27 c_1^2-4 c_2^3 \geq 0$ and $27 c_1^2-4 c_2^3 < 0$.

i) Assume that $27 c_1 - 4c_2^3 \geq 0$. It is well defined because then 
\\$\sqrt[3]{\sqrt{3} \sqrt{27 c_1^2-4 c_2^3}+9 c_1} > 0$ since $c_1 > 0$. Furthermore, this quantity is real since every individual term is real. We now need to show non-negativeness. If $c_2 \geq 0$ it is trivial, hence assume $c_2 < 0$ for which we have

\begin{align*}
    &\frac{\sqrt[3]{\frac{2}{3}} c_2}{\sqrt[3]{\sqrt{3} \sqrt{27 c_1^2-4 c_2^3}+9 c_1}}+\frac{\sqrt[3]{\sqrt{3} \sqrt{27 c_1^2-4 c_2^3}+9 c_1}}{\sqrt[3]{18}} \\
    &> - \frac{\sqrt[3]{\frac{2}{3}} \vert c_2 \vert}{\sqrt[3]{\sqrt{3} \sqrt{4 \vert c_2 \vert ^3}}} + \frac{\sqrt[3]{\sqrt{3} \sqrt{4 \vert c_2 \vert^3}}}{\sqrt[3]{18}} = 0.
\end{align*}

ii) $27 c_1 - 4c_2^3 < 0$ implies $c_2 > 0$ since $c_1 > 0$, which yields
\begin{align*}
    \sqrt{3} \sqrt{27 c_1^2-4 c_2^3}+9 c_1 &= i \sqrt{3} \sqrt{4 c_2^3 - 27 c_1^2}+9 c_1 \\
    &= r \left(\cos(\theta) + i \sin(\theta)\right) \\
    &= r e^{i \theta} = z
\end{align*}
where $\theta = \arccos(\frac{9 c_1}{r})$ and after simplifying $r = \sqrt{12 c_2^3}$. Since $c_2 > 0$ we have $r > 0$ and hence the root is well-defined. Substituting and simplifying yields that
\begin{align*}
    x = \cos(\theta/3) 2 \sqrt{\frac{c_2}{3}}. 
\end{align*}
which shows that the root is real. The root is non-negative because $\cos(\theta) = \frac{9c_1}{r} > 0$ and hence $\theta \in (-\frac{\pi}{2}, \frac{\pi}{2})$, which implies that $\theta/3 \in (-\frac{\pi}{2}, \frac{\pi}{2})$ and $\cos(\theta/3) > 0$. 
\end{proof}

\begin{proof}[Proof of Proposition \ref{prop:solve_a}]
    The result follows from combining Lemma \ref{lma:a_sign} and \ref{lma:solve_indef_a}.
\end{proof}

\subsubsection{Proof of Proposition \ref{prop:linear_runtime}}\label{app:linear_runtime}
We prove the construction of $\lambda^*$ in linear time.
\begin{proof}
    Assume for contradiction that there exists another index $J' > J$ for which $h^2 \succeq \lambda_{A_J'}$ and $l_{A_J}(\lambda_{A_J}^*) \geq l_{A_{J'}}(\lambda_{A_J'})$. Since $\lambda_{A_{J'}}$ also satisfies the active set $A_J$ we have that $l_{A_J}(\lambda_{A_J}^*) \geq l_{A_{J}}(\lambda_{A_J'})$ implying that $\lambda_{A_J}^*$ was not the global minimum. By Proposition \ref{prop:solve_a} this non-uniqueness is a contradiction. The second part follows from Proposition \ref{prop:FONC} that $\lambda_{A_J}^*$ is decreasing from the $J+1$th entry on-wards when $J \geq 1$. Hence it suffices to only compute and check that the $J+1$th entry is less than $h^2$, i.e. $h^2 \geq (\lambda_{A_J}^*)_{J + 1}$
\end{proof}
\end{document}